\definecolor{MyLinkColor}{rgb}{0,0,0.4}
\numberwithin{equation}{section}
\newcommand{\id}{\mathop{\rm id}\nolimits}
\newcommand{\re}{\mathop{\rm Re}\nolimits}
\newcommand{\sign}{\mathop{\rm sign}\nolimits}
\newcommand{\PV}{\mathop{\rm PV}\nolimits}
\newcommand{\0}{\Omega}
\newcommand{\e}{\varepsilon}
\newcommand{\p}{\partial}
\newcommand{\wt}{\widetilde}
\newcommand{\ov}{\overline}
\newcommand{\bA}{\mathbb{A}}
\newcommand{\kH}{\mathcal{H}}
\newcommand{\kL}{\mathcal{L}}
\newcommand{\oo}{\ov\omega}
\newcommand{\C}{\mathbb{C}}
\newcommand{\E}{\mathbb{E}}
\newcommand{\R}{\mathbb{R}}
\newcommand{\s}{\mathbb S}
\newcommand{\N}{\mathbb{N}}
\newcommand{\Z}{\mathbb{Z}}
\newcommand{\X}{\mathbb{X}}
\DeclareMathOperator{\supp}{supp}
\newtheorem{thm}{Theorem}[section]
\newtheorem{prop}[thm]{Proposition}
\newtheorem{lemma}[thm]{Lemma}
\newtheorem{rem}[thm]{Remark}
\numberwithin{equation}{section}
\title[On a periodic quasilinear Muskat problem]{Well-posedness and stability results for a quasilinear  periodic Muskat problem}
\author[A.--V. Matioc]{Anca--Voichita Matioc}
\address{Institut f\"ur Angewandte Mathematik, Leibniz Universit\"at Hannover, Welfengarten~1, 30167 Hannover, Deutschland.}
\email{matioca@ifam.uni-hannover.de}
\author[B.--V. Matioc]{Bogdan--Vasile Matioc}
\address{Mathematisches Institut, Heinrich-Heine-Universit\"at D\"usseldorf, Universit\"atsstr.~1, 40225 D\"usseldorf, Deutschland.}
\email{Bogdan-Vasile.Matioc@uni-duesseldorf.de}
\subjclass[2010]{35B35; 35B65; 35K59;  35Q35; 42B20}
\keywords{Muskat problem; Singular integral; Well-posedness; Parabolic smoothing; Stability}
\begin{document}

\begin{abstract}
We study the Muskat problem describing the spatially periodic motion of two fluids with equal viscosities under the effect of gravity in  a vertical unbounded two-dimensional geometry.
We first prove that the classical formulation of the problem is equivalent to a nonlocal and nonlinear evolution equation expressed
in terms of singular integrals and  having only the interface between the fluids as unknown.
Secondly, we show that this evolution equation has a  quasilinear structure, which is at a formal level not obvious,  and  we also disclose the parabolic character of the equation.
Exploiting these aspects, we  establish the local well-posedness of the problem for arbitrary initial data in $H^s(\mathbb{S})$, with $s\in(3/2,2)$, 
determine a new criterion for the global existence of solutions,
and uncover a parabolic smoothing property.
Besides, we prove that the zero steady-state solution is exponentially stable.
\end{abstract}

\maketitle

 %%%%%%%%%%%%%%%%%%%%%%%%%%%%%%%%%%%%%%%%%%%%%%%%%%%%%%%%%%%%%%%%%%%
%%%%%%%%%%%%%%%%%%%%%%%%%%%%%%%%%%%%%%%%%%%%%%%%%%%%%%%%%%%%%%%%%%%%
%%%%%%%%%%%%%%%%%%%%%%%%%%%%%%%%%%%%%%%%%%%%%%%%%%%%%%%%%%%%%%%%%%%%
%%%%%%%%%%%%%%%%%%%%%%%%%%%%%%%%%%%%%%%%%%%%%%%%%%%%%%%%%%%%%%%%%%%
%%%%%%%%%%%%%%%%%%%%%%%%%%%%%%%%%%%%%%%%%%%%%%%%%%%%%%%%%%%%%%%%%%%%
%%%%%%%%%%%%%%%%%%%%%%%%%%%%%%%%%%%%%%%%%%%%%%%%%%%%%%%%%%%%%%%%%%%%
\section{Introduction and the main results}\label{S1}
 %%%%%%%%%%%%%%%%%%%%%%%%%%%%%%%%%%%%%%%%%%%%%%%%%%%%%%%%%%%%%%%%%%%
%%%%%%%%%%%%%%%%%%%%%%%%%%%%%%%%%%%%%%%%%%%%%%%%%%%%%%%%%%%%%%%%%%%%
%%%%%%%%%%%%%%%%%%%%%%%%%%%%%%%%%%%%%%%%%%%%%%%%%%%%%%%%%%%%%%%%%%%%
%%%%%%%%%%%%%%%%%%%%%%%%%%%%%%%%%%%%%%%%%%%%%%%%%%%%%%%%%%%%%%%%%%%
%%%%%%%%%%%%%%%%%%%%%%%%%%%%%%%%%%%%%%%%%%%%%%%%%%%%%%%%%%%%%%%%%%%%
%%%%%%%%%%%%%%%%%%%%%%%%%%%%%%%%%%%%%%%%%%%%%%%%%%%%%%%%%%%%%%%%%%%%
We study the contour-integral formulation
\begin{equation}\label{P}
\left\{ 
\begin{array}{rlll}
 \p_tf(t,x)\!\!&=&\!\!-\displaystyle\frac{k\Delta_\rho}{4\pi\mu} f'(t,x)  \PV\int_{-\pi}^{\pi}f'(t,x-s)\frac{(T_{[x,s]} f(t))(1+t_{[s]}^2)}{t_{[s]}^2+(T_{[x,s]} f(t))^2 }\, ds\\[2ex]
 &&\!\!\displaystyle-\frac{k\Delta_\rho}{4\pi\mu}  \PV \int_{-\pi}^{\pi}f'(t,x-s)\frac{t_{[s]}[1-(T_{[x,s]} f(t))^2]}{t_{[s]}^2+(T_{[x,s]} f(t))^2 }\, ds,\qquad t>0,\, x\in\R, \\[1ex]
 f(0)\!\!&=&\!\!f_0
\end{array}\right.
\end{equation}
of the Muskat problem, that describes the evolution of the interface $[y=f(t,x)]$ separating two periodic immiscible fluid layers of unbounded height and  with equal viscosity constants  in a 
homogeneous porous medium with permeability 
$k$ (or a vertical Hele-Shaw cell), the fluid system being   close to the rest state far away from the interface, see Section \ref{S2}.\footnote{In fact, the problem \eqref{P} is also a model for the evolution
of a fluid system that moves  vertically  with velocity $(0,V)$, for some $V\in\R$, and for which  the interface between the fluids is parameterized as the graph $[y=f(t,x)+Vt]$, cf. \cite{M17x}.}
The unknown $f=f(t,x)$ is thus assumed to be $2\pi$-periodic with respect to the variable $x$.
For this reason, we use $\s$ to denote the unit circle $\R/2\pi\Z,$ functions depending on $x\in\s $ being $2\pi$-periodic with respect to the real variable $x$. 
We further   denote by $g$ the Earth's gravity, $\rho_\pm$ is the density of the fluid $\pm$ which is located at $\0_\pm(t)$, where
\[ \0_-(t):=[y<f(t,x)]\qquad\text{and}\qquad \0_+(t):=[y>f(t,x)],\]
and $\mu_-=\mu_+=:\mu$ is the viscosity coefficient of the  fluids.
For brevity, we write   $(\,\cdot\,)'$ for the spatial derivative  $\p_x $ and $\PV$ stands for the principle value which is taken at $s=0$.
Our analysis is in the regime where the Rayleigh-Taylor condition
\begin{equation}\label{RTC}
\Delta_\rho:=g(\rho_--\rho_+)>0
\end{equation}
is satisfied.
Furthermore, in order to  keep the notation short we have set
\[
\delta_{[x,s]}f:=f(x)-f(x-s),\qquad T_{[x,s]} f=\tanh\Big(\frac{\delta_{[x,s]}f}{2}\Big), \qquad t_{[s]}=\tan\Big(\frac{s}{2}\Big).
\]
Our analysis covers only the Muskat problem without surface tension, because we expect that  when surface tension is taken into account we obtain the very same  
results as in the general case when the viscosity constants are not necessarily equal (this issue will be addressed in a subsequent paper). Such a behavior is  not expected when neglecting surface tension, cf. \cite{M16x, M17x}. 

Formulated by the American petroleum engineer M. Muskat in 1934 (see \cite{Mu34}) to describe  the intrusion of water into an oil sand, the  Muskat problem is a classical model in the petroleum engineering and as such it has received 
much attention also in the field of applied mathematics.
Because of the complexity of the mathematical formulation most of the  progress in the analysis of this problem is quite recent, see the references  
\cite{Y96, A04, BCG14, BCS16, BS16x, CGSV15x, CCG11, CCG13b, CG07, CG10, CGO14, CGCSS14x, SCH04, BV14, EMM12a,  EMW15, M16x, CCGS13, GB14, CCFG13, CCFGL12, GG14, CGFL11, 
GS14,   PS16, PS16x, A14, FT03, HTY97,  T16}. 
 Moreover, a large body of  literature considers  the special case when the two fluids have equal viscosities and the surface tension effects are neglected, 
 as in this particular context the equations of motion (see~\eqref{PB}) have an elegant contour-integral formulation (see~\eqref{P} in the periodic setting).\smallskip
 
 With respect to the goals of this paper, we mention  the following results pertaining to \eqref{P}:
 \begin{itemize}
  \item[$(i)$] {\bf Local existence and uniqueness:}  In \cite{CG07} the authors establish the local existence and uniqueness of solutions to \eqref{P} for general initial data in $f_0\in H^3(\s)$.   
  This result was improved recently in \cite{BCS16} where, in the more general context of fluids with general (not necessarily equal) viscosities,  
 initial data $f_0\in H^2(\s)$ that are small with respect to some $H^{3/2+\e}(\s)$-norm, where $\e\in(0,1)$ is arbitrarily small, are considered.  
 Besides, it was shown in \cite{BCS16} that   solutions corresponding to small  initial data  in $H^2(\s)$ exist globally.
   \item[$(ii)$] {\bf Exponential stability:} In contrast to the case of confined geometries considered in  \cite{EMM12a, EM11a, EEM09c, FT03, PS16x},  stability  results for the zero solution to \eqref{P} are not available.
   Related to this issue we mention the exponential stability result for the zero solution to the one-phase inhomogeneous Muskat problem in $H^2(\s)$,  established only recently in \cite{BS16x}, and the
   decaying estimates  with respect to  $H^r(\s)$-norms, $r\in[0,2)$,  of solutions to \eqref{P} corresponding to small  initial data  in $H^2(\s)$, see \cite{BCS16}. 
   \item[$(iii)$] {\bf Instantaneous real-analyticity:} In \cite{M16x} it was shown,  in the nonperiodic case, that the solutions to the Muskat problem that start in $H^s(\R)$, $s>3/2$, become instantly real-analytic, and  they
   stay so until they cease to exist.
   A similar result is available in the periodic case only for the one-phase problem, see \cite{BCS16}.
 \end{itemize}

 The first result established in Section \ref{S2} of this  paper is the Proposition \ref{PE}, where we   rigorously prove, in a quite general setting, that  the classical formulation  \eqref{PB} of the Muskat problem  and the
 contour-integral formulation \eqref{P} are equivalent.
Motivated by this equivalence property, we then address  the well-posedness of the evolution problem \eqref{P}  -- that is, the local existence and uniqueness of classical solutions as well as
 the continuous dependence of the solutions on the initial data -- in the Sobolev space $H^s(\s),$ with $s\in(3/2,2)$.
 The well-posedness  is stated in our first main result Theorem~\ref{MT1}, where we also prove a parabolic smoothing property for \eqref{P}. 
 Moreover, we  provide  in Theorem \ref{MT1} a new criterion for the global existence of solutions.
 
 In Theorem \ref{MT2} we then establish the first exponential stability result for the zero solution to the unconfined Muskat problem and show how the  physical properties of the fluids influence 
 the convergence rate of small solutions to \eqref{P} towards the zero solution.

 Our analysis is motivated by the  previous paper  \cite{M16x}  where analogous results to Theorem \ref{MT1} and Proposition \ref{PE} have been established in the nonperiodic case.
 The main ingredients exploited in  \cite{M16x} are:  (I)  the quasilinearity of the nonperiodic version 
 \begin{equation}\label{CIF}
 \p_tf(t,x)= \frac{k\Delta_\rho }{2\pi \mu}\PV\int_\R\frac{y(f'(t,x)-f'(t,x-y))}{ y^2+(f(t,x)-f(t,x-y))^2 }\, dy\qquad \text{for $ t>0$, $x\in\R$},
\end{equation} 
 of \eqref{P}$_1$, a property which is obvious at least at formal level since the integrand in the latter equation is linear with respect to the highest order spatial
derivative of $f$, that is $f'$, and; (II) the parabolic character of the problem when assuming that the 
 Rayleigh-Taylor condition  \eqref{RTC} is satisfied.    
With respect to (I) we point out that the situation changes in the periodic case considered herein as the right-hand side of \eqref{P}$_1$  is not linear with respect to $f'$, and it cannot be modified to become linear in $f'$. 
Despite this, we are still able to formulate \eqref{P} in a suitable functional analytic setting as a quasilinear evolution problem (see Section \ref{S3}).  
 Concerning (II), this aspect has been firstly evinced in a bounded geometry in \cite{EMW15}, and subsequently in \cite{M16x,M17x} for the unconfined Muskat problem for fluids with equal or different viscosities.
 It is shown in Section \ref{S4} that the   parabolicity property is a defining feature also for \eqref{P}.  
 These two  properties are also crucial in this paper as they enable us  to use  theory developed by H. Amann   \cite{Am86, Am88, Am93} (see Theorem \ref{T:A} in the Appendix \ref{S:A}) and 
 A. Lunardi \cite{L91, L95} (the nonlinear principle of linearized stability) in the proofs of our main results, see Sections \ref{S4}-\ref{S5}. 
 
Compared to the nonperiodic case studied in \cite{M16x}, the analysis in this paper is more involved for two clear reasons: firstly because, as mentioned above, the equation $\eqref{P}_1$ is not linear with respect to $f'$; and, secondly
because a deep result from harmonic analysis (see the main theorem in  \cite{TM86}) which was used in an essential way in \cite{M16x} has no correspondent in the periodic setting.\smallskip

 The first main result of this paper is the following theorem.
  \begin{thm}\label{MT1}
Let  $\Delta_\rho>0$ and  $s\in(3/2,2) $  be given.  Then, the following hold:
 \begin{itemize}
 \item[$(i)$] \noindent{\bf\em Well-posedness in $H^s(\s)$:} The problem \eqref{P} possesses for each   $f_0\in H^s(\s)$ a unique maximal solution 
\[ \qquad f:=f(\cdot; f_0)\in {\rm C}([0,T_+(f_0)),H^s(\s))\cap {\rm C}((0,T_+(f_0)), H^2(\s))\cap {\rm C}^1((0,T_+(f_0)), H^1(\s)),\]
with $T_+(f_0)\in(0,\infty]$, and $[(t,f_0)\mapsto f(t;f_0)]$ defines a  semiflow on $H^s(\s)$. \\[-2ex]
  \item[$(ii)$] \noindent{\bf\em  Global existence/blow-up criterion:} If   
\[
\sup_{[0,T_+(f_0))\cap[0,T]}\|f(t)\|_{H^s(\s)}<\infty \qquad\text{for all $T>0$,}
\]
then  $T_+(f_0)=\infty$.\\[-2ex]
  \item[$(iii)$]\noindent{\bf\em  Parabolic smoothing:}
  The mapping  $[(t,x)\mapsto f(t,x)]:(0,T_+(f_0))\times\R\to\R$ is real-analytic. In particular, $f(t)$ is a real-analytic function for all $t\in (0,T_+(f_0))$.
 \end{itemize}
\end{thm}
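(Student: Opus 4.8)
The plan is to recast \eqref{P} as an abstract quasilinear parabolic Cauchy problem and to apply the theory of H.~Amann in the form of Theorem~\ref{T:A}. Using the quasilinear structure exhibited in Section~\ref{S3}, I would rewrite \eqref{P}$_1$ as $\partial_t f=\Phi(f)$, where $\Phi$ is the (translation-invariant) operator defined by the right-hand side of \eqref{P}$_1$, together with an operator-valued map $\mathbb{A}\colon H^s(\s)\to\mathcal{L}(H^2(\s),H^1(\s))$ that isolates the leading part of $\Phi$, so that $\partial_t f=\mathbb{A}(f)[f]+\Psi(f)$ with $\Psi$ of lower order. The assumption $s>3/2$ enters here through the embedding $H^{s-1}(\s)\hookrightarrow {\rm C}(\s)$ and through multiplication estimates in the Sobolev scale, which are what make $\Phi$, $\mathbb{A}$, $\Psi$ well-defined and smooth and, in particular, $\mathbb{A}(f)$ a bounded operator from $H^2(\s)$ to $H^1(\s)$. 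The parabolicity established in Section~\ref{S4}, which relies crucially on the Rayleigh--Taylor condition $\Delta_\rho>0$, then gives $-\mathbb{A}(f)\in\mathcal{H}(H^2(\s),H^1(\s))$, i.e.\ $-\mathbb{A}(f)$ generates a strongly continuous analytic semigroup on $H^1(\s)$ with domain $H^2(\s)$, locally uniformly in $f$. Recalling that $H^s(\s)$ is, up to equivalent norms, an interpolation space of exponent $s-1\in(1/2,1)$ between $H^1(\s)$ and $H^2(\s)$, all hypotheses of Theorem~\ref{T:A} are met with $E_0:=H^1(\s)$, $E_1:=H^2(\s)$ and $E_\beta:=H^s(\s)$, $\beta=s-1$.

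Theorem~\ref{T:A} then yields at once a unique maximal solution in exactly the regularity class of $(i)$, together with the semiflow property, and it also provides a blow-up alternative. To deduce $(ii)$ I would argue by contradiction: assuming $T_+(f_0)<\infty$ while $\sup_{[0,T]\cap[0,T_+(f_0))}\|f(t)\|_{H^s(\s)}<\infty$ for all $T>0$, one uses the variation-of-constants representation of $f$ associated with the analytic semigroup generated by $-\mathbb{A}(f(\cdot))$, together with the smoothing estimates for this semigroup and interpolation, to bound the orbit $\{f(t):t\in[\delta,T_+(f_0))\}$ in $H^{s'}(\s)$ for some $s'\in(s,2]$ and each $\delta\in(0,T_+(f_0))$. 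Since $H^{s'}(\s)$ embeds compactly in $H^s(\s)$, the orbit is then relatively compact in $H^s(\s)$ as $t\uparrow T_+(f_0)$, contradicting the blow-up alternative of Theorem~\ref{T:A}; hence $T_+(f_0)=\infty$.

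For the parabolic smoothing $(iii)$ I would use a parameter trick. Fix $t_0\in(0,T_+(f_0))$, so $f(t_0)\in H^2(\s)\hookrightarrow H^s(\s)$, and for $(\lambda,\mu)$ in a neighbourhood of $(1,0)$ in $\R^2$ set $u_{\lambda,\mu}(t,x):=f(t_0+\lambda t,\,x+\mu t)$ on an interval $[0,\sigma]$ with $\sigma<T_+(f_0)-t_0$. Using the translation invariance of $\Phi$ one checks that $u_{\lambda,\mu}$ solves $\partial_t u=\lambda\,\Phi(u)+\mu\,u'$ with $u(0)=f(t_0)$, a problem that for each $(\lambda,\mu)$ near $(1,0)$ retains the quasilinear parabolic structure above uniformly in $(\lambda,\mu)$ and whose right-hand side depends real-analytically on $(\lambda,\mu,u)$. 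The analytic dependence on parameters for quasilinear parabolic problems (obtained, e.g., by applying the implicit function theorem for analytic maps in the maximal-regularity formulation, cf.\ \cite{Am86,Am88,Am93,L91,L95}) shows that $[(\lambda,\mu)\mapsto u_{\lambda,\mu}(\sigma)]$ is a real-analytic $H^s(\s)$-valued map near $(1,0)$. Composing with the point evaluation at an arbitrary $x_0\in\R$, which is continuous on $H^s(\s)\hookrightarrow {\rm C}^1(\s)$, the function $(\lambda,\mu)\mapsto f(t_0+\lambda\sigma,\,x_0+\mu\sigma)$ is real-analytic near $(1,0)$; undoing the affine change of variables $(t,x)=(t_0+\lambda\sigma,\,x_0+\mu\sigma)$ shows that $f$ is real-analytic near $(t_0+\sigma,x_0)$. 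Since $x_0\in\R$ is arbitrary and $t_0+\sigma$ can be made equal to any prescribed point of $(0,T_+(f_0))$ by choosing $t_0$ and $\sigma$ suitably, this proves $(iii)$.

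The real work — and the main obstacle — is in Sections~\ref{S3} and~\ref{S4}: showing that the periodic singular integrals in \eqref{P}$_1$ genuinely assemble into a quasilinear structure, and proving that the associated linear operator is parabolic. This is substantially harder than in the nonperiodic problem of \cite{M16x}, because \eqref{P}$_1$ is not affine in $f'$ and because the harmonic-analysis result of \cite{TM86} used there has no periodic analogue, so the relevant singular-integral and commutator operators must be estimated directly. Granting those two structural facts, the proof of Theorem~\ref{MT1} itself is short: $(i)$ is Theorem~\ref{T:A}, $(ii)$ is the compactness--bootstrap argument above, and $(iii)$ is the parameter trick.
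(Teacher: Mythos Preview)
Your approach is essentially the paper's: apply Theorem~\ref{T:A} with $\E_0=H^1(\s)$, $\E_1=H^2(\s)$, feed in Proposition~\ref{L:0}/\eqref{Gom} and Theorem~\ref{T:1} for the hypotheses, and use the parameter trick for $(iii)$. Three points need correction.

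First, your interpolation setup is off. You put $\E_\beta=H^s(\s)$ with $\beta=s-1$, but in Theorem~\ref{T:A} the initial datum lives in $\E_\alpha$ with $\alpha>\beta$. The correct choice is $\E_\alpha=H^s(\s)$ (so $\alpha=s-1$) and $\E_\beta=H^{\bar s}(\s)$ for an auxiliary $\bar s\in(3/2,s)$; Proposition~\ref{L:0} holds for every exponent in $(3/2,2)$, so $\Phi$ is still $C^{1-}$ on $\E_\beta$. Without this two-index setup the strict inequality $\beta<\alpha$ fails and Theorem~\ref{T:A} does not apply. Relatedly, the paper's reformulation is purely quasilinear, $\partial_t f=\Phi(f)[f]$, with no separate semilinear remainder $\Psi$; the ``lower order'' pieces are absorbed into $\Phi(f)$ via \eqref{AFf}.

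Second, Theorem~\ref{T:A} yields uniqueness only among solutions that are additionally H\"older continuous $[0,T]\to\E_\beta$; it does not automatically cover the class stated in $(i)$. The paper closes this gap: from the explicit formulas \eqref{PHI0}--\eqref{PHI2} and Lemma~\ref{L:2} one sees that $\partial_t f=\Phi(f)[f]$ is bounded in $L_2(\s)$ whenever $f\in C([0,T],H^s(\s))$, hence $f\in C^{0,1}([0,T],L_2(\s))$; interpolating with $C([0,T],H^s(\s))$ gives $f\in C^\eta([0,T],H^{\bar s}(\s))$ with $\eta=(s-\bar s)/s$, which is exactly the extra regularity Theorem~\ref{T:A} needs. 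You should include this step.

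Third, your argument for $(ii)$ is more than necessary. The global-existence criterion in Theorem~\ref{T:A} is literally ``$f$ bounded in $\E_\alpha$ on $[0,T]\cap[0,T_+(f_0))$ for all $T>0$ implies $T_+(f_0)=\infty$'', which, with $\E_\alpha=H^s(\s)$, is precisely $(ii)$. No bootstrap to $H^{s'}$ or compactness argument is required.
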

With respect to Theorem \ref{MT1} we add the following.

\begin{rem}\label{R:-1}
 \begin{itemize}
   \item[$(i)$] We conjecture (in both periodic and  nonperiodic settings) that $s=3/2$ is (similarly as for the Camassa-Holm equation \cite{B06}) the  critical Sobolev index for the local well-posedness of the Muskat problem 
   for fluids with equal viscosities. We expect that proving the ill-posedness of \eqref{P} for $s<3/2$ is more difficult than  for the Camassa-Holm equation, as the proof   in  \cite{B06}  exploits the availability of
   some explicit solutions, which have no correspondent for \eqref{P}. Besides, the structure of the Camassa-Holm equation is simpler as the highest order term is a local one. \\[-2ex]
  \item[$(ii)$] We point out that if $T_+(f_0)<\infty$ for  some $f_0\in H^s(\s),$ $s\in(3/2,2)$, then Theorem \ref{MT1}   implies 
\[
\inf_{r\in(3/2,s)}\sup_{[0,T_+(f_0))}\|f(t)\|_{H^r(\s)}=\infty.
\]
 \end{itemize}
\end{rem}

The second main result of the paper is the following asymptotic stability result.

\begin{thm}[Exponential stability]\label{MT2}
  Let  $\Delta_\rho>0$. Then, the zero solution to \eqref{P} is exponentially stable.
  More precisely, given $\omega\in(0,k\Delta_\rho/2\mu)$, there exist constants $r>0$ and $M>0$,  with the property that if $f_0\in H^2(\s)$ satisfies 
  \[
  \text{$\|f_0\|_{H^{2}(\s)}<r$ \qquad and\qquad $\int_{-\pi}^\pi f_0\, dx=0,$}
  \]
 then $T_+(f_0)=\infty$  and
  \begin{align*}
   \|f(t)\|_{H^2(\s)}+\|\p_t f(t)\|_{H^1(\s)}\leq Me^{-\omega t}\|f_0\|_{H^{2}(\s)}\qquad \text{for all $t\geq0.$}
  \end{align*} 
 \end{thm}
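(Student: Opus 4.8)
\medskip

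\noindent\emph{Proof strategy.} The plan is to realize \eqref{P} as a quasilinear parabolic evolution equation having $f=0$ as an equilibrium, to compute its linearization at $0$ explicitly, to exhibit a spectral gap after removing the constants, and then to invoke the principle of linearized stability for quasilinear problems (A.~Lunardi \cite{L95}) together with the well-posedness and parabolic smoothing from Theorem~\ref{MT1}. I would first record two elementary facts. On the one hand, $f\equiv0$ solves \eqref{P}$_1$, since every integrand vanishes when $\delta_{[x,s]}f\equiv0$. On the other hand, the integral mean $\langle f(t)\rangle:=(2\pi)^{-1}\int_{-\pi}^{\pi}f(t,x)\,dx$ is conserved along solutions: integrating \eqref{P}$_1$ over a period and applying Fubini's theorem for principal value integrals, the substitution $x\mapsto x+s$ in the inner integrals turns the two $s$-integrands into odd functions of $s$ (this exploits the oddness of $t_{[s]}$, the evenness in its argument of the relevant $\tanh$-expressions, and the $2\pi$-periodicity of $f$), so their principal values vanish. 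Hence the closed subspaces $\wh H^{\sigma}(\s):=\{g\in H^{\sigma}(\s):\langle g\rangle=0\}$ are invariant under the semiflow of Theorem~\ref{MT1}, and since $0\in\wh H^{2}(\s)$ it suffices to prove the exponential stability of $0$ within these subspaces, where the constants have been removed.

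Next I would use the quasilinear formulation of Section~\ref{S3} to write \eqref{P}$_1$ as $\dot f=\Phi(f)$, with $\Phi$ a $C^{1}$ (in fact real-analytic) map from a neighborhood of $0$ in $\wh H^{s}(\s)$ into $\wh H^{s-1}(\s)$, $s\in(3/2,2)$, and I would invoke the parabolicity established in Section~\ref{S4} (via Theorem~\ref{T:A}) to know that the relevant linear operators generate analytic semigroups. The decisive step is the explicit identification of $\mathbb A:=\partial\Phi(0)$. Linearizing the two singular integrals in \eqref{P}$_1$ at $f=0$, the first contributes nothing (it carries both the prefactor $f'$ and a factor $T_{[x,s]}f=O(f)$, hence is quadratic), while in the second one replaces $T_{[x,s]}f$ by $\delta_{[x,s]}f/2$ and discards the quadratic remainder, which leaves
\[
\mathbb A\,g=-\frac{k\Delta_\rho}{4\pi\mu}\,\PV\!\int_{-\pi}^{\pi}\frac{g'(\,\cdot\,-s)}{t_{[s]}}\,ds=-\frac{k\Delta_\rho}{2\mu}\,\Lambda g,\qquad \Lambda:=(-\p_x^{2})^{1/2}.
\]
On $\wh H^{s-1}(\s)$ with domain $\wh H^{s}(\s)$ the operator $\mathbb A$ has compact resolvent and spectrum $\{-k\Delta_\rho n/(2\mu):n\in\N,\ n\ge1\}$, so that
\[
\sup\{\re\lambda:\lambda\in\sigma(\mathbb A)\}=-\frac{k\Delta_\rho}{2\mu}<-\omega
\]
for the prescribed $\omega\in(0,k\Delta_\rho/2\mu)$, and $\mathbb A$ generates an analytic semigroup on $\wh H^{s-1}(\s)$ with $\|e^{t\mathbb A}\|\le Ce^{-\omega t}$ in the relevant norms.

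With these ingredients, Lunardi's principle of linearized stability \cite{L95} applied to $\dot f=\Phi(f)$ near the equilibrium $0\in\wh H^{s}(\s)$ would yield constants $r_{1},M_{1}>0$ such that every $f_{0}\in\wh H^{s}(\s)$ with $\|f_{0}\|_{H^{s}(\s)}<r_{1}$ has $T_{+}(f_{0})=\infty$ and satisfies $\|f(t)\|_{H^{s}(\s)}\le M_{1}e^{-\omega t}\|f_{0}\|_{H^{s}(\s)}$ for all $t\ge0$. To reach the statement in terms of $H^{2}$ and $H^{1}$, for $f_{0}\in\wh H^{2}(\s)$ with $\|f_{0}\|_{H^{2}(\s)}<r$ ($r$ to be fixed) smallness in $H^{s}$ is automatic, so the above applies; the parabolic smoothing of Theorem~\ref{MT1} together with the regularizing estimates for analytic semigroups upgrades the decay on $[1,\infty)$ to $\|f(t)\|_{H^{2}(\s)}\le Ce^{-\omega t}\|f_{0}\|_{H^{s}(\s)}$, while on $[0,1]$ continuous dependence and the $H^{2}$-regularity arguments underlying Theorem~\ref{MT1} give $\sup_{[0,1]}\|f(t)\|_{H^{2}(\s)}\le C\|f_{0}\|_{H^{2}(\s)}$; a suitable choice of $r$ and $M$ then glues the two regimes into $\|f(t)\|_{H^{2}(\s)}\le Me^{-\omega t}\|f_{0}\|_{H^{2}(\s)}$ for all $t\ge0$. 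Since moreover $\Phi(0)=0$ and $\Phi$ is locally Lipschitz from $H^{2}(\s)$ into $H^{1}(\s)$, the equation $\p_{t}f=\Phi(f)$ gives $\|\p_{t}f(t)\|_{H^{1}(\s)}=\|\Phi(f(t))\|_{H^{1}(\s)}\le C\|f(t)\|_{H^{2}(\s)}$, which completes the estimate after renaming $M$.

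The step I expect to be the main obstacle is the rigorous identification $\mathbb A=-\frac{k\Delta_\rho}{2\mu}\Lambda$, i.e.\ controlling, in the operator norm of $\mathcal L(\wh H^{s}(\s),\wh H^{s-1}(\s))$, the error produced by linearizing the two nonlinear singular integrals, together with the verification that $\mathbb A$ generates an analytic semigroup with exactly the spectral bound $-k\Delta_\rho/(2\mu)$ on the zero-mean subspace; once this is secured, the abstract machinery and the bootstrap to the $H^{2}$--$H^{1}$ estimate are routine. This linearization is closely tied to, and should largely be read off from, the parabolicity analysis carried out in Section~\ref{S4}.
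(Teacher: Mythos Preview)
Your overall strategy---linearize at $f=0$, identify the spectral gap on the zero-mean subspace, and invoke Lunardi's principle of linearized stability---is exactly the paper's. The execution, however, takes an unnecessary detour that creates extra work.

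The paper applies \cite[Theorem 9.1.2]{L95} directly in the pair $(H^2_0(\s),H^1_0(\s))$: writing $\Psi(f):=\tfrac{k\Delta_\rho}{4\pi\mu}\Phi(f)[f]$, it records $\Psi\in C^\infty(H^2_0(\s),H^1_0(\s))$ (this is immediate from the real-analyticity \eqref{Gom}), computes $\p\Psi(0)=-\tfrac{k\Delta_\rho}{2\mu}(-\p_x^2)^{1/2}$, and reads off the spectrum on $H^1_0$. Lunardi's theorem then yields \emph{both} $\|f(t)\|_{H^2}$ and $\|\p_tf(t)\|_{H^1}$ decay in one stroke---no bootstrap, no separate argument for $\p_tf$. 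Your route through $(\wh H^s,\wh H^{s-1})$ followed by a smoothing upgrade to $H^2$ is not only longer, it rests on the unestablished fact that $\Psi$ maps $\wh H^s(\s)$ smoothly into $\wh H^{s-1}(\s)$; the paper only proves $\Phi\in C^\omega(H^s,\kL(H^2,H^1))$, which feeds $\Psi:H^2_0\to H^1_0$ but says nothing about $H^s\to H^{s-1}$. Also, what you flag as ``the main obstacle'' (identifying $\mathbb A$ with error control in operator norm) is in fact immediate once smoothness of $\Psi$ is known: the Fr\'echet derivative at $0$ is simply $c\Phi(0)[\cdot]$, and $\Phi(0)=-2\pi(-\p_x^2)^{1/2}$ was already computed in Section~\ref{S4}.

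One further difference: the paper proves $\langle\Psi(f)\rangle=0$ via Stokes's theorem applied to the divergence-free velocity field $v_+$ from Proposition~\ref{PE}, which sidesteps the Fubini issues for principal value integrals that your oddness argument would have to justify (and your oddness claim for the \emph{second} integrand in \eqref{P} does not go through as stated---after the substitution the $T_{[x,s]}f$ factors do not match up symmetrically).
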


In particular, Theorem \ref{MT2} shows how the exponential stability of the  zero solution is influence by the physical properties of the fluids. 
 Solutions that correspond to small data converge  at a faster rate towards the zero solution  if  the density jump or the permeability  are larger or if the viscosity is smaller.
We point out that due to the fact that the integral mean of the initial data is preserved by the flow, see Section \ref{S5}, only solutions with zero integral mean may converge to the zero steady-state.

%%%%%%%%%%%%%%%%%%%%%%%%%%%%%%%%%%%%%%%%%%%%%%%
%%%%%%%%%%%%%%%%%%%%%%%%%%%%%%%%%%%%%%%%%%%%%%
%%%%%%%%%%%%%%%%%%%%%%%%%%%%%%%%%%%%%%%%%%%%%%%
%%%%%%%%%%%%%%%%%%%%%%%%%%%%%%%%%%%%%%%%%%%%%%
%%%%%%%%%%%%%%%%%%%%%%%%%%%%%%%%%%%%%%%%%%%%%%%
%%%%%%%%%%%%%%%%%%%%%%%%%%%%%%%%%%%%%%%%%%%%%%
%%%%%%%%%%%%%%%%%%%%%%%%%%%%%%%%%%%%%%%%%%%%%%%
%%%%%%%%%%%%%%%%%%%%%%%%%%%%%%%%%%%%%%%%%%%%%%
\section{The equations of motion and the equivalent formulation}\label{S2}
%%%%%%%%%%%%%%%%%%%%%%%%%%%%%%%%%%%%%%%%%%%%%%%
%%%%%%%%%%%%%%%%%%%%%%%%%%%%%%%%%%%%%%%%%%%%%%
%%%%%%%%%%%%%%%%%%%%%%%%%%%%%%%%%%%%%%%%%%%%%%%
%%%%%%%%%%%%%%%%%%%%%%%%%%%%%%%%%%%%%%%%%%%%%%
%%%%%%%%%%%%%%%%%%%%%%%%%%%%%%%%%%%%%%%%%%%%%%%
%%%%%%%%%%%%%%%%%%%%%%%%%%%%%%%%%%%%%%%%%%%%%%
%%%%%%%%%%%%%%%%%%%%%%%%%%%%%%%%%%%%%%%%%%%%%%%
%%%%%%%%%%%%%%%%%%%%%%%%%%%%%%%%%%%%%%%%%%%%%%

 In this section we  present  the   equations  governing the evolution of the fluids system and we rigorously prove that the latter are equivalent to the contour-integral formulation \eqref{P}.    
 Since for flows in porous media the conservation of momentum equation can be replaced by Darcy's law, cf. e.g. \cite{Be88},
  in the fluid layers  the dynamic is governed by the following equations 
\begin{subequations}\label{PB}
\begin{equation}\label{eq:S1}
\left\{\begin{array}{rllllll}
{\rm div}\,  v_\pm(t)\!\!&=&\!\!0&\text{in  $\Omega_\pm(t)$} , \\[1ex]
v_\pm(t)\!\!&=&\!\!-\cfrac{k}{\mu}\big(\nabla p_\pm(t)+(0,\rho_\pm g)\big)&\text{in $ \0_\pm(t)$} 
\end{array}
\right.
\end{equation} 
 for $t> 0$, where $v_\pm(t):=(v_\pm^1(t),v_\pm^2(t))$ denotes the velocity vector and $p_\pm(t)$ the pressure of the fluid $\pm.$ 
These equations are supplemented by  the natural  boundary conditions at the free interface
\begin{equation}\label{eq:S2}
\left\{\begin{array}{rllllll}
p_+(t)-p_-(t)\!\!&=&\!\!0&\text{on $ [y=f(t,x)]$}, \\[1ex]
 \langle v_+(t)| \nu(t)\rangle\!\!&=&\!\!  \langle v_-(t)| \nu(t)\rangle &\text{on $[y=f(t,x)]$},
\end{array}
\right.
\end{equation} 
where  $\nu(t) $ is the unit normal at $[y=f(t,x)]$ pointing into $\0_+(t)$ and   $\langle \, \cdot\,|\,\cdot\,\rangle$  the inner product in~$\R^2$.
Furthermore, we impose the  far-field boundary  condition
 \begin{equation}\label{eq:S3}
\begin{array}{llllll}
v_\pm(t,x,y)\to 0 &\text{for  $|y|\to\infty$ and uniformly with respect to $x$}, 
\end{array}
\end{equation} 
which states that  the fluid motion is localized, the  fluids being close to the rest state far way from the free interface $[y=f(t,x)]$.
The motion of this interface  is coupled to that of the fluids through  the kinematic boundary condition
 \begin{equation}\label{eq:S4}
 \p_tf(t)\, =\, \langle v_\pm(t)| (-f'(t),1)\rangle \qquad\text{on   $ [y=f(t,x)].$}
\end{equation}  
As we consider periodic flows, it is also assumed that $f(t),$ $v_\pm(t)$, and $p_\pm(t)$ are $2\pi$-periodic with respect to $x$ for all $t>0$. 
Finally, the interface at time $t=0$ is  given
\begin{equation}\label{eq:S5}
f(0)\, =\, f_0.
\end{equation} 
\end{subequations}

The equations \eqref{PB} are known as the Muskat problem and they determine  entirely the dynamic of the fluid system.
We now show that   the Muskat  problem \eqref{PB} is equivalent to  the system \eqref{P} presented in the introduction.
\begin{prop}[Equivalence result]\label{PE}
Let   $T\in(0,\infty] $ be given.
The following are equivalent:
 \begin{itemize}
 \item[$(i)$] the Muskat problem \eqref{PB} for $f\in  {\rm C}([0,T), L_2(\s)) \cap {\rm C}^1((0,T), L_2(\s))$ and
 \begin{align*}
 f(t)\in  H^2(\s), \quad v_\pm(t)\in {\rm C}(\ov{\0_\pm(t)})\cap {\rm C}^1({\0_\pm(t)}), \quad p_\pm(t)\in {\rm C}^1(\ov{\0_\pm(t)})\cap {\rm C}^2({\0_{\pm}(t)}) 
 \end{align*}
 for all $t\in(0,T)$;\\[-2ex]
\item[$(ii)$] the   problem \eqref{P} for $ f\in  {\rm C}([0,T), L_2(\s)) \cap {\rm C}^1((0,T), L_2(\s))$ and  $f(t)\in  H^2(\s)$ for all $t\in(0,T)$.\\[-1.5ex]
\end{itemize}
\end{prop}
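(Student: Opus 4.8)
The plan is to route the equivalence through the velocity potential. Putting $\phi_\pm(t):=-(k/\mu)\bigl(p_\pm(t)+\rho_\pm g\, y\bigr)$, with $y$ the vertical coordinate function, Darcy's law in \eqref{eq:S1} reads $v_\pm(t)=\nabla\phi_\pm(t)$, while $\dv v_\pm(t)=0$ forces $\phi_\pm(t)$ to be harmonic in $\0_\pm(t)$; together with \eqref{eq:S3} this makes $\nabla\phi_\pm(t)$ vanish as $|y|\to\infty$. Since $\mu_+=\mu_-$ and $p_+(t)=p_-(t)$ on the interface, the conditions \eqref{eq:S2} are equivalent to
\[
\phi_+(t)-\phi_-(t)=\frac{k\Delta_\rho}{\mu}\,f(t)\qquad\text{and}\qquad\langle\nabla\phi_+(t)\,|\,\nu(t)\rangle=\langle\nabla\phi_-(t)\,|\,\nu(t)\rangle\qquad\text{on }[y=f(t,x)].
\]
Hence, for each fixed $t$, prescribing $(v_\pm(t),p_\pm(t))$ as in \eqref{PB} amounts to solving a Laplace transmission problem in the unbounded $2\pi$-periodic domains $\0_\pm(t)$ (prescribed jump of $\phi$, continuous conormal derivative across the graph, decaying gradient at infinity), the interface being coupled to the fluids only through the kinematic law \eqref{eq:S4}. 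As the time regularity of $f$ required in $(i)$ and $(ii)$ is the same, the whole argument reduces to identifying, at each $t$, the trace of $\nabla\phi_\pm(t)$ on $[y=f(t,x)]$ and inserting it into \eqref{eq:S4}.

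\emph{From $(i)$ to $(ii)$.} Given a solution of \eqref{PB}, I would represent $\phi_\pm(t)$ in $\0_\pm(t)$ via Green's identity with the $2\pi$-periodic Newtonian kernel, that is, the kernel whose gradient corresponds, under the identification $\R^2\simeq\C$, to $z\mapsto\tfrac12\cot(z/2)$. Since the conormal derivatives of $\phi_+(t)$ and $\phi_-(t)$ coincide on the interface, the single-layer contributions of the two domains cancel and the boundary trace of $\nabla\phi_\pm(t)$ reduces to a Birkhoff--Rott-type principal value integral with density $\p_x(\phi_+(t)-\phi_-(t))=(k\Delta_\rho/\mu)f'(t)$; here the far-field condition \eqref{eq:S3} is precisely what rules out the contributions ``from infinity'', which in turn relies on the uniqueness of the transmission problem in the unbounded $2\pi$-periodic strip (a Liouville/energy argument). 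Writing $z=s+i\,\delta_{[x,s]}f(t)$ one has $\tfrac12\cot(z/2)=\bigl(\sin s-i\sinh\delta_{[x,s]}f(t)\bigr)\big/\bigl(2\cosh\delta_{[x,s]}f(t)-2\cos s\bigr)$, and the half-angle identities relating $\cos s,\sin s$ to $t_{[s]}$ and $\cosh\delta_{[x,s]}f,\sinh\delta_{[x,s]}f$ to $T_{[x,s]}f$ convert this kernel into the rational integrands $(T_{[x,s]}f)(1+t_{[s]}^2)\big/\bigl(t_{[s]}^2+(T_{[x,s]}f)^2\bigr)$ and $t_{[s]}\bigl(1-(T_{[x,s]}f)^2\bigr)\big/\bigl(t_{[s]}^2+(T_{[x,s]}f)^2\bigr)$ appearing in \eqref{P}$_1$. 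Substituting the resulting boundary velocity into \eqref{eq:S4} produces exactly \eqref{P}$_1$, the constant $-k\Delta_\rho/4\pi\mu$ included; since \eqref{eq:S5} coincides with \eqref{P}$_2$ and the regularity demanded in $(ii)$ is already contained in $(i)$, this gives $(ii)$.

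\emph{From $(ii)$ to $(i)$.} Given $f$ as in $(ii)$, for each fixed $t$ I would construct $\phi_\pm(t)$ as the restrictions to $\0_\pm(t)$ of the $2\pi$-periodic double-layer potential over $[y=f(t,x)]$ with density $(k\Delta_\rho/\mu)f(t)$ — equivalently, define $v_\pm(t)$ directly as the periodic Birkhoff--Rott integral with vortex-sheet strength $(k\Delta_\rho/\mu)f'(t)$ — and set $p_\pm(t):=-(\mu/k)\phi_\pm(t)-\rho_\pm g\,y$. Then $\phi_\pm(t)$ is harmonic, the classical jump relations for double layers yield the two interface conditions above (hence \eqref{eq:S2}, after the choice of $p_\pm(t)$), and $\nabla\phi_\pm(t)$ vanishes as $|y|\to\infty$. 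Since $H^2(\s)\hookrightarrow{\rm C}^{1,1/2}(\s)$, the interface is a curve of class ${\rm C}^{1,1/2}$, so classical potential theory for periodic layer potentials with H\"older continuous density provides $v_\pm(t)\in{\rm C}(\ov{\0_\pm(t)})\cap{\rm C}^1(\0_\pm(t))$ and $p_\pm(t)\in{\rm C}^1(\ov{\0_\pm(t)})\cap{\rm C}^2(\0_\pm(t))$, and in particular shows that the principal value integrals in \eqref{P}$_1$ exist. Finally, the Plemelj formulas identify the boundary trace of $\langle v_\pm(t)\,|\,(-f'(t),1)\rangle$ with the right-hand side of \eqref{P}$_1$, which equals $\p_tf(t)$ by hypothesis; and continuity of the conormal derivative gives $\langle v_+(t)\,|\,(-f'(t),1)\rangle=\langle v_-(t)\,|\,(-f'(t),1)\rangle$. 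This is \eqref{eq:S4}, and \eqref{eq:S5} is \eqref{P}$_2$, so $(i)$ holds.

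\emph{Main obstacle.} The delicate part, common to both directions, is the low-regularity potential theory: one must justify the layer-potential representations, their boundary traces, and the Plemelj jump relations over an interface that is only of class $H^2(\s)$ — in particular prove that the principal value integrals in \eqref{P}$_1$ make sense — and one must establish the uniqueness of the Laplace transmission problem in the unbounded $2\pi$-periodic strip, which is what legitimises the representation used in $(i)\Rightarrow(ii)$. By comparison, the periodization bookkeeping — the sign and the constant $k\Delta_\rho/4\pi\mu$, and the half-angle substitutions turning the $\cot$/hyperbolic kernel into the rational integrands of \eqref{P}$_1$ — is routine, though it must be carried out with care.
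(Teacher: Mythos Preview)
Your outline is sound and the strategy would succeed, but it is organised differently from the paper's proof. The paper works on the vorticity side: it computes $\omega=\mathrm{rot}\,v$ as a distribution concentrated on the interface with density $\bar\omega=-(k\Delta_\rho/\mu)f'$, writes down the periodic Biot--Savart field $\tilde v$ explicitly (your kernel $\tfrac12\cot(z/2)$, expanded into real and imaginary parts), and then proves $v=\tilde v$ by constructing a global stream function $\psi$ for the difference $V=v-\tilde v$ and applying Liouville's theorem to the bounded holomorphic function with real part $\psi$. The Plemelj step is done by hand: the paper subtracts the flat Cauchy kernel $2/(w-z)$ from $1/\tan((w-z)/2)$, observes that the remainder is nonsingular, and invokes the classical Plemelj theorem only for the Cauchy piece, using $f\in C^{3/2}(\s)$ and $\bar\omega\in C^{1/2}(\s)$. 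For $(ii)\Rightarrow(i)$ the paper defines $p_\pm$ by path-integrating $v_\pm$ and must then check separately that $p_\pm$ is $2\pi$-periodic in $x$, which reduces to $\int_0^{2\pi}v_\pm^1(s,\pm d)\,ds=0$ and is deduced from the far-field decay.

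Your route through the velocity potential and double-layer representations is equally legitimate and in one respect tidier: defining $p_\pm$ directly from a periodic $\phi_\pm$ makes the periodicity of the pressure automatic, and the transmission formulation packages the two interface conditions cleanly. On the other hand, the paper's bare-hands verification of the boundary limits avoids having to invoke a periodic layer-potential theory at $C^{1,1/2}$ regularity; if you follow your plan you should either locate such a reference or imitate the paper's trick of splitting off the flat Cauchy kernel. Your uniqueness step (``Liouville/energy argument'') is precisely what the paper carries out via the stream function, so that obstacle is real but resolvable in the same way.
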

\begin{proof}
 In the arguments that follow the dependence on time is not written explicitly.
Given a test function  $\varphi\in {\rm C}^\infty_0(\R^2)$, it is easy to verify  that the vorticity 
\[\omega:={\rm rot\,} v:=\p_xv^2-\p_yv^1\in\mathcal{D}'(\R^2)\]
associated to  the global velocity field $v:=(v^1,v^2):=v_-{\bf 1}_{[y\leq f(x)]}+v_+{\bf 1}_{[y> f(x)]}$ satisfies
\begin{align*}
 \langle\omega,\varphi\rangle=\int_{\R} \ov\omega(x)\varphi(x,f(x))\, dx,
\end{align*}
where
\begin{equation}
\oo:= -\frac{k\Delta_\rho}{\mu} f'\in H^1(\s).\label{ovom}
\end{equation}
We next prove that the velocity $v$ is determine by the function $f$. 
More precisely, we show that $v$ coincides in $\R^2\setminus[y=f(x)]$ with the velocity field $\wt v$ introduced via
\begin{equation}\label{VF1}
\begin{aligned}
 \wt v^1(x,y)&:=-\frac{1}{4\pi} \int_{\s}\oo(s)\frac{\tanh((y-f(s))/2)\big[1+\tan^2((x-s)/2)\big]}{\tan^2((x-s)/2)+\tanh^2((y-f(s))/2) }\, ds,\\[1ex]
 \wt v^2(x,y)&:=\frac{1}{4\pi} \int_{\s}\oo(s)\frac{\tan ((x-s)/2)\big[1-\tanh^2((y-f(s))/2)\big]}{\tan^2((x-s)/2)+\tanh^2((y-f(s))/2) }\, ds 
\end{aligned}
\end{equation}
for $(x,y)\in  \R^2 \setminus[y=f(x)].$
To this end, we first observe that  $\wt v$ is well-defined away from $[y=f(x)]$ and $2\pi$-periodic with respect to $x$.
Recalling \eqref{ovom}, it is not difficult to see that  
the far-field boundary condition \eqref{eq:S3} is also satisfied by $\wt v$.
Furthermore, $\wt v^i\in {\rm C}^1(\R^2\setminus[y=f(x)])$, $i=1,2$, and, letting $\wt v_\pm:=\wt v|_{\0_\pm}$, it holds that 
\begin{equation}\label{dvrt}
 {\rm div}\,  \wt v_\pm =  {\rm rot}\,  \wt v_\pm =0\qquad\text{in  $\Omega_\pm.$}
\end{equation}
As a further step, we   show that $\wt v_\pm\in {\rm C}(\ov{\Omega_\pm})$ and that the value of $\wt v_\pm$ on the boundary $[y=f(x)]$ is given by the formulas
\begin{equation}\label{PF}
\begin{aligned}
 \wt v_\pm^1(x,f(x))&=-\frac{1}{4\pi} \PV\int_{\s}\oo(s)\frac{\tanh((f(x)-f(s))/2)\big[1+\tan^2((x-s)/2)\big]}{\tan^2((x-s)/2)+\tanh^2((f(x)-f(s))/2) }\, ds\\[1ex]
 &\hspace{0.424cm}\mp\frac{1}{2}\frac{\oo(x)}{1+f'^2(x)},\\[1ex]
 \wt v_\pm^2(x,f(x))&=\frac{1}{4\pi}\PV \int_{\s}\oo(s)\frac{\tan ((x-s)/2)\big[1-\tanh^2((f(x)-f(s))/2)\big]}{\tan^2((x-s)/2)+\tanh^2((f(x)-f(s))/2) }\, ds\mp\frac{1}{2}\frac{\oo(x)f'(x)}{1+f'^2(x)},
\end{aligned}
\end{equation}
where
\[
\Big(\frac{\oo(x)}{1+f'^2(x)},\frac{\oo(x)f'(x)}{1+f'^2(x)}\Big)=\frac{\oo(x)(1,f'(x))}{1+f'^2(x)}
\]
is a vector tangent to the free surface $[y=f(x)].$
Moreover, in \eqref{PF} the principle value is taken at $s=x$ (if we integrate over $(x-\pi,x+\pi)$).
Changing variables,    \eqref{PF} can be compactly written as
\begin{equation}\label{PF1}
\begin{aligned}
 \wt v_\pm (x,f(x))&=\frac{1}{4\pi} \PV\int_{-\pi}^{\pi}\oo(x-s)\frac{\big(-(T_{[x,s]}f) (1+t_{[s]}^2),t_{[s]}[1-(T_{[x,s]}f)^2]\big) }{t_{[s]}^2+(T_{[x,s]}f)^2 }\, ds\\[1ex]
 &\hspace{0.424cm}\mp\frac{1}{2}\frac{\oo(x)(1,f'(x))}{1+f'^2(x)},
\end{aligned}
\end{equation}
and now the $\PV$ is taken at zero.
Using the fact that $\oo\in {\rm C}^{1/2}(\s)$ and $f\in {\rm C}^{3/2}(\s)$, it is matter of direct computation  to see, in view of the inequalities $x\leq \tan x$ for $x\in[0,\pi/2)$ and $\tanh x\leq x$ for $x\geq0$,
that the principal value integrals \eqref{PF}  exist.
In order to prove that the functions defined in \eqref{VF1} extend continuously up to the free surface $[y=f(x)]$, it suffices to show that the limits
 $ \wt v_-(x,f(x))$ and $\wt v_+(x,f(x))$  of $\wt v$ at $(x,f(x)),$ when we approach this point
from below the interface $[y=f(x)] $ or from above, respectively,
 exist and that they are as given in \eqref{PF}.
 To this end, we first note  that 
 \[
 \wt v_\pm(z)=\overline{\frac{1}{4\pi i}\int_\Gamma \frac{g(w)}{\tan((w-z)/2)}\, dw}\qquad\text{for $z=(x,y)\not\in[y=f(x)],$}
 \]
where we integrate over  
\[
\Gamma:=\{(s,f(s))\,:\, x-\pi\leq s<x+\pi\}
\]
and where $g:\Gamma\to\C$ is given by
\[
g(s,f(s)):=g(s):=-\frac{\oo(s)(1-if'(s))}{1+f'^2(s)},\qquad s\in\R.
\]
It holds that   $g\in {\rm C}^{1/2}(\s,\R^2)$  and additionally 
\begin{align*}
 \overline{\wt v_\pm}(z)&=\frac{1}{4\pi i}\int_\Gamma \frac{g(w)}{\tan((w-z)/2)}\, dw\\[1ex]
 &=\frac{1}{4\pi i}\int_\Gamma g(w)\Big[\frac{1}{\tan((w-z)/2)}-\frac{2}{w-z}\Big]\, dw
 +\frac{1}{2\pi i}\int_\Gamma \frac{g(w)}{w-z}\, dw.
\end{align*}
Observing that $|\tan z-z|\leq C |z|^3$ for all $|z|\leq 1$, the first integral is not singular for $z=(x,f(x))$ and the limit $z\to (x,f(x))$ can be performed by using classical arguments.
For the second integral we may use 
 the Plemelj theorem, see e.g. \cite{JKL93}, to pass to the limit $z\to (x,f(x))$ and to conclude in this way that    indeed  $\wt v_\pm(z)\to \wt v_\pm(x,f(x))$,  together with $\wt v_\pm\in {\rm C}(\ov{\0_\pm})$.
Setting $V_\pm:=v_\pm-\wt v_\pm$, the function  $V:=(V^1,V^2):=V_-{\bf 1}_{[y\leq f(x)]}+V_+{\bf 1}_{[y> f(x)]}$ belongs to $  {\rm BC}(\R^2)$. 
Let
\[
\psi_\pm(x,y):=\int^{y}_{f(x)}V_\pm ^1(x,s)\, ds-\int_0^x  \langle V_\pm(s,f(s))| (-f'(s),1)\rangle \, ds\qquad\text{for $(x,y)\in\ov \0_\pm,$}
\]
be the stream function  associated to $V_\pm$. 
It then follows from \eqref{dvrt}   that   $\psi:=\psi_-{\bf 1}_{[y\leq f]}+\psi_+{\bf 1}_{[y>f]}$ belongs to $  {\rm C}^1(\R^2)$  and satisfies $\Delta\psi=0$ in $\mathcal{D}'(\R^2)$.
Hence, $\psi$ is the real part of a holomorphic function $u:\C\to\C$.
Since $u'$ is also holomorphic and $u'=\p_x\psi-i\p_y\psi=-(V^2,V^1)$ is bounded and vanishes for $|y|\to\infty$ it follows that $u'=0$, hence $V=0$.
This proves that $v_\pm=\wt v_\pm$.
The kinematic boundary condition \eqref{eq:S5} together with \eqref{PF1} imply that $f$ solves the   evolution problem \eqref{P}.

Vice versa,  given a solution $f$ to \eqref{P},  we define the velocity fields $v_\pm\in {\rm C}(\ov{\0_\pm})\cap{\rm C}^1({\0_\pm})$  by \eqref{PF} and the pressures 
$p_\pm\in {\rm C}^1(\ov{\0_\pm})\cap{\rm C}^2({\0_\pm})$ according to 
\begin{align*}
 p_\pm(x,y):=c_\pm-\frac{\mu}{k}\int_0^x v_\pm^1(s,\pm d)\, ds-\frac{\mu}{k}\int_{\pm d}^y v_\pm^2(x,s)\, ds-\rho_\pm gy, \qquad (x,y)\in\ov\0_\pm
\end{align*}
for some  $d>\|f\|_ \infty$ and $c_\pm\in\R$.  
We note that $p_\pm$ are $2\pi$-periodic  with respect to the $x$-variable if and only if 
\begin{equation}\label{L:LLL}
\int_0^{2\pi} v_\pm^1(s,\pm d)\, ds=0.
\end{equation}
In order to establish \eqref{L:LLL}, we infer from  \eqref{dvrt} that 
\[
\int_0^{2\pi}(\p_xv_\pm^2-\p_y v_\pm^1)(x,\pm y)\, dx=0 \qquad\text{for all $y>\|f\|_\infty,$ }
\]
and, exploiting the periodicity of $v_\pm^2$, we find constants $C_\pm\in\R$ such that
\[
\int_0^{2\pi} v_\pm^1(x,\pm y)\, dx= C_\pm \qquad\text{for all $y>\|f\|_\infty.$ }
\]
Recalling \eqref{eq:S3}, we conclude that $C_\pm=0,$ and this proves   the $2\pi$-periodicity of $p_\pm$.
Finally,    it is easy to see, in view of \eqref{PF1}, that the constants $c_\pm$ can be chosen in such a way that  the tuple $(f, v_\pm, p_\pm)$  solves \eqref{PB}.
\end{proof}

 %%%%%%%%%%%%%%%%%%%%%%%%%%%%%%%%%%%%%%%%%%%%%%%%%%%%%%%%%%%%%%%%%%%
%%%%%%%%%%%%%%%%%%%%%%%%%%%%%%%%%%%%%%%%%%%%%%%%%%%%%%%%%%%%%%%%%%%%
%%%%%%%%%%%%%%%%%%%%%%%%%%%%%%%%%%%%%%%%%%%%%%%%%%%%%%%%%%%%%%%%%%%%
%%%%%%%%%%%%%%%%%%%%%%%%%%%%%%%%%%%%%%%%%%%%%%%%%%%%%%%%%%%%%%%%%%%
%%%%%%%%%%%%%%%%%%%%%%%%%%%%%%%%%%%%%%%%%%%%%%%%%%%%%%%%%%%%%%%%%%%%
%%%%%%%%%%%%%%%%%%%%%%%%%%%%%%%%%%%%%%%%%%%%%%%%%%%%%%%%%%%%%%%%%%%%
\section{The Muskat problem as a qausilinear evolution equation}\label{S3}
 %%%%%%%%%%%%%%%%%%%%%%%%%%%%%%%%%%%%%%%%%%%%%%%%%%%%%%%%%%%%%%%%%%%
%%%%%%%%%%%%%%%%%%%%%%%%%%%%%%%%%%%%%%%%%%%%%%%%%%%%%%%%%%%%%%%%%%%%
%%%%%%%%%%%%%%%%%%%%%%%%%%%%%%%%%%%%%%%%%%%%%%%%%%%%%%%%%%%%%%%%%%%%
%%%%%%%%%%%%%%%%%%%%%%%%%%%%%%%%%%%%%%%%%%%%%%%%%%%%%%%%%%%%%%%%%%%
%%%%%%%%%%%%%%%%%%%%%%%%%%%%%%%%%%%%%%%%%%%%%%%%%%%%%%%%%%%%%%%%%%%%
%%%%%%%%%%%%%%%%%%%%%%%%%%%%%%%%%%%%%%%%%%%%%%%%%%%%%%%%%%%%%%%%%%%%
The goal of this section is to show that the Muskat problem \eqref{P} can indeed be formulated as a quasilinear  evolution problem (see \eqref{AF}).
This is the first stage in the analysis of \eqref{P}.  
As mentioned in the introduction, this essential property of \eqref{P} is not obvious at all, in contrast to the nonperiodic case where it is evident  (at least at a formal level).
The departure from the nonperiodic case is due to  the fact that an  argument from  \cite{M16x} that uses integration by parts  is not applicable to $\eqref{P}_1$ in order to further transform this evolution equation.
The key point in the analysis presented below is to relate  the equation $\eqref{P}_1$ to the contour-integral formulation \eqref{CIF} obtained in the nonperiodic case and to enforce
the integration by parts argument from \cite{M16x}. 
This procedure will give rise to several integral terms of ``lower order''\footnote{As it is clear from Theorem~\ref{MT1}, the Muskat problem \eqref{P} is a first order evolution problem.}.

To begin, we first note that \eqref{P} can be expressed, after rescaling the time appropriately,  in the following compact form
\begin{equation}\label{AF}
 \p_tf = \Phi(f)[f],\quad t>0,\qquad f(0)=f_0,
\end{equation}
where $\Phi(f)$ is a linear operator which is decomposed as
\begin{equation}\label{AFf}
 \Phi(f)=\Phi_0(f)-\Phi_1(f)-\Phi_2(f),
\end{equation}
with
\begin{align}
 \Phi_0(f)[h](x)&:=    \PV\int_{-\pi}^{\pi}\frac{\delta_{[x,s]}h'}{t_{[s]}}\frac{1}{1+\big(T_{[x,s]} f/t_{[s]}\big)^2 }\, ds,\label{PHI0}\\[1ex]
 \Phi_1(f)[h](x)&:=   h'(x)\PV\int_{-\pi}^{\pi}\Big[\frac{f'(x-s)}{t_{[s]}}\frac{T_{[x,s]} f}{t_{[s]}}+\frac{1}{t_{[s]}}  \Big]\frac{1}{1+\big(T_{[x,s]} f/t_{[s]}\big)^2 } \, ds,\label{PHI1}\\[1ex]
 \Phi_2(f)[h](x)&:= h'(x) \int_{-\pi}^{\pi}f'(x-s)\frac{T_{[x,s]} f}{1+\big(T_{[x,s]} f/t_{[s]}\big)^2 }\, ds \nonumber\\[1ex]
 &\hspace{0.424cm}-   \int_{-\pi}^{\pi}h'(x-s)\frac{\big(T_{[x,s]} f/t_{[s]}\big)T_{[x,s]} f}{1+\big(T_{[x,s]} f/t_{[s]}\big)^2 }\, ds.\label{PHI2}
\end{align}
In the following the function $f$ is chosen to be an element of the periodic Sobolev space $H^s(\s)$, $s\in(3/2,2),$ and the appropriate space for $h$ is  $H^2(\s)$. 
However, when dealing with ``lower order'' terms we sometimes choose $h\in H^s(\s).$
We first note  that  $\Phi_2$ encompasses only integral terms  that are not singular and therefore the $\PV$ is not needed.
As we shall see below, cf.  Lemma \ref{L:2}, the integral defined  in \eqref{PHI1} and which appears to be singular  can be decomposed as a sum of three integral terms which are all not singular.
Finally, the integral  in \eqref{PHI0} is singular and quite similar  to the right-hand side of \eqref{CIF}, and this term proves to be the most important one in the analysis.

The  main result of this section is the  regularity property stated in Proposition \ref{L:0}, which shows  that \eqref{AF} is indeed a quasilinear evolution problem.
 \begin{prop}\label{L:0}
 Given $s\in(3/2,2)$, it holds that
\begin{equation}\label{G1}
  \Phi\in {\rm C}^{1-}(H^s(\s),\kL(H^2(\s), H^1(\s))).
 \end{equation}
\end{prop}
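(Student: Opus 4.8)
The plan is to prove the Lipschitz continuity claimed in \eqref{G1} by establishing, for each of the three pieces $\Phi_0,\Phi_1,\Phi_2$ separately, that the map $f\mapsto \Phi_j(f)$ is locally Lipschitz from $H^s(\s)$ into $\kL(H^2(\s),H^1(\s))$. Since $\Phi_j(f)[h]$ is linear in $h$, it suffices to prove the estimate
\[
\|\Phi_j(f_1)[h]-\Phi_j(f_2)[h]\|_{H^1(\s)}\leq C\big(\|f_1\|_{H^s},\|f_2\|_{H^s}\big)\,\|f_1-f_2\|_{H^s}\,\|h\|_{H^2(\s)}
\]
for $f_1,f_2$ in a fixed ball of $H^s(\s)$, together with the qualitative fact that $\Phi_j(f)[h]\in H^1(\s)$. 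Because $H^1(\s)$ is an algebra and $H^s(\s)\hookrightarrow {\rm C}^{1+\alpha}(\s)$ for $s>3/2$, the factors $h'(x)$, $f'(x)$ and $f'(x-s)$ appearing outside the integrals in $\Phi_1,\Phi_2$ are harmless multipliers; the whole difficulty is concentrated in showing that the singular/weakly-singular integral operators
\[
h\mapsto \PV\!\!\int_{-\pi}^{\pi}\frac{\delta_{[x,s]}h'}{t_{[s]}}\,\frac{1}{1+\big(T_{[x,s]}f/t_{[s]}\big)^2}\,ds
\]
and its relatives map $H^1(\s)$ boundedly into itself, with operator norm depending Lipschitz-continuously on $f$ through the coefficient $1/(1+(T_{[x,s]}f/t_{[s]})^2)$.

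The key technical step is a general mapping lemma for operators of the form
\[
B(a_1,\dots,a_m)[h](x)=\PV\!\!\int_{-\pi}^{\pi}\frac{\delta_{[x,s]}h}{t_{[s]}}\,\prod_{i=1}^m\frac{\delta_{[x,s]}a_i}{t_{[s]}}\;\Psi\Big(\frac{T_{[x,s]}f}{t_{[s]}}\Big)\,ds,
\]
and the simpler non-singular analogues (no $\delta_{[x,s]}h/t_{[s]}$ factor, or with a $t_{[s]}$ rather than $1/t_{[s]}$). First I would record the elementary bilipschitz estimates for the building blocks: $t_{[s]}=\tan(s/2)$ is comparable to $s$ on $(-\pi,\pi)$, so $\delta_{[x,s]}g/t_{[s]}$ is, up to bounded factors, a difference quotient; for $g\in{\rm C}^{1+\alpha}$ this is bounded with a modulus of continuity, and the map $g\mapsto \delta_{[x,s]}g/t_{[s]}$ is linear hence trivially Lipschitz. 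Similarly $T_{[x,s]}f=\tanh(\delta_{[x,s]}f/2)$ and the smooth bounded function $\Psi(y)=1/(1+y^2)$ (together with $y\mapsto y^2/(1+y^2)$) are globally Lipschitz with bounded derivatives, so $f\mapsto \Psi(T_{[x,s]}f/t_{[s]})$ is Lipschitz as a map into $L_\infty$ uniformly in $x,s$. Then, to get $H^1$ rather than merely $L_2$ bounds, one differentiates $B(\dots)[h](x)$ in $x$: $\partial_x$ acting on $\delta_{[x,s]}h/t_{[s]}=(h(x)-h(x-s))/t_{[s]}$ produces $(h'(x)-h'(x-s))/t_{[s]}$ (again a difference quotient, of $h'\in H^1\hookrightarrow {\rm C}^{1/2}$, hence controlled), while $\partial_x$ on the smooth-coefficient factors produces additional $\delta_{[x,s]}(\cdot)/t_{[s]}$-type terms times bounded functions of $f$; the upshot is that $\partial_x B(\dots)[h]$ is again a finite sum of operators of exactly the same admissible type. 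This reduces everything to an $L_2$-boundedness statement for a single-scale family of Calderón–Zygmund-type commutators on the circle.

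The heart of the matter — and the step I expect to be the main obstacle — is precisely that $L_2(\s)$ boundedness. In the nonperiodic paper \cite{M16x} this was supplied by the deep theorem of \cite{TM86} on singular integrals with ${\rm C}^1$-type kernels, which, as the authors explicitly note, has no periodic counterpart available off the shelf. So the plan is to transfer the problem to the line: using that $t_{[s]}=\tan(s/2)=s/2+O(s^3)$ near $s=0$, one writes $1/t_{[s]}=2/s+k(s)$ with $k$ smooth and $2\pi$-periodic, hence splits each operator into a principal part, which is (after the substitution $s\mapsto s$, periodizing $h$) a genuine truncated Hilbert-transform-type commutator falling under the nonperiodic theory of \cite{TM86} applied to $2\pi$-periodic inputs, plus a remainder with a bounded smooth kernel that is trivially bounded on $L_2(\s)$ with Lipschitz dependence on $f$. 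One must check that the periodization does not destroy the hypotheses — but since $h\in H^2(\s)\subset {\rm C}^{1+\alpha}$ has bounded derivatives and the coefficient functions of $f$ are bounded in ${\rm C}^{\alpha}$, the relevant kernels have the required ${\rm C}^1$-regularity uniformly. The Lipschitz dependence on $f$ then follows by the same splitting applied to the difference $B(f_1)-B(f_2)$, using the algebra property of $H^1(\s)$ and the elementary Lipschitz bounds for $\Psi$ and $\tanh$ recorded above; assembling the three pieces $\Phi_0$ (one singular operator of the above type), $\Phi_1$ (to be rewritten via Lemma \ref{L:2} as a sum of three non-singular operators times the bounded multiplier $h'$) and $\Phi_2$ (manifestly non-singular, times $h'$ or with an $h'(x-s)$ inside a bounded kernel) yields \eqref{G1}.
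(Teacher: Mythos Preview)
Your overall strategy---split $\Phi=\Phi_0-\Phi_1-\Phi_2$, treat $\Phi_1,\Phi_2$ as multiplication by $C^1$ functions against $h'$, and reduce the genuinely singular piece $\Phi_0$ to a nonperiodic Calder\'on commutator---is precisely the route the paper takes (the proof of the proposition simply invokes Lemmas~\ref{L:1}--\ref{L:3}). The non-singular pieces and the differentiation-under-the-integral part of your outline are fine and match Lemmas~\ref{L:1}--\ref{L:2}.

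The gap is in your transfer step for $\Phi_0$. You propose to write $1/t_{[s]}=2/s+k(s)$ and then ``periodize $h$'' and invoke the line theory of \cite{TM86} on ``$2\pi$-periodic inputs''. Two problems: (a) a $2\pi$-periodic $h$ is not in $L_2(\R)$, so the $L_2(\R)\to L_2(\R)$ bounds from the nonperiodic theory cannot be applied to it as stated; and (b) your splitting handles only the kernel $1/t_{[s]}$, not the coefficient $\big(1+(T_{[x,s]}f/t_{[s]})^2\big)^{-1}$, which still carries the periodic building blocks $\tanh,\tan$. The paper fixes both issues in Lemma~\ref{L:3}: it first replaces the entire periodic integrand by its line analogue via the decomposition $\Phi_0=\Phi_a-\Phi_b+2\Phi_c$ (so $\Phi_c$ has kernel $1/s$ \emph{and} coefficient $\big(1+(\delta_{[x,s]}f/s)^2\big)^{-1}$, the difference $\Phi_a-\Phi_b$ being nonsingular), and then---crucially---it \emph{localizes} rather than periodizes: one multiplies the periodic $h$ by a fixed cutoff $\varphi\in C^\infty_0(\R)$ with $\varphi\equiv1$ on $[-2\pi,2\pi]$ to obtain $\tilde h=h\varphi\in H^2(\R)$, writes $\Phi_c(f)[h]=A_1(f)[\tilde h]-A_2(f)[h]$ on $(-\pi,\pi)$, applies the nonperiodic multilinear bounds (Lemma~\ref{L:A1}, which packages \cite{TM86}) to $A_1$, and treats the tail $A_2$ over $|s|>\pi$ directly as a nonsingular integral. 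This cutoff device is what makes the passage to $\R$ legitimate; without it your sketch does not close. (Incidentally, $k(s)=\cot(s/2)-2/s$ is smooth on $(-\pi,\pi)$ but not $2\pi$-periodic; this is harmless for the estimate but worth correcting.)
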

\begin{proof}
 The assertion is a direct  consequence of \eqref{AFf} and of the Lemmas  \ref{L:1}-\ref{L:3} below.
\end{proof}

Similarly as in the nonperiodic case \cite{M16x}, it can be shown by using the Lemmas \ref{L:1}-\ref{L:3} and other classical arguments, that the operator $\Phi$ is actually real-analytic, that is
\begin{equation}\label{Gom}
  \Phi\in {\rm C}^{\omega}(H^s(\s),\kL(H^2(\s), H^1(\s)))\qquad\text{for each $s\in(3/2,2)$}.
 \end{equation}
The lengthy details are left to the interested reader.\medskip

To  proceed, we  first study the mapping properties of the operator $\Phi_2$.
 \begin{lemma}\label{L:1}
 Given $s\in(3/2,2)$, it holds that
 \begin{equation}\label{MP1}
  \Phi_2\in {\rm C}^{1-}(H^s(\s),\kL(H^2(\s), H^1(\s))).
 \end{equation}
\end{lemma}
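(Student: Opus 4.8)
The plan is to handle the two integral terms on the right‑hand side of \eqref{PHI2} separately, writing $\Phi_2(f)[h]=h'\cdot A(f)-B(f)[h]$ with
\[
A(f)(x):=\int_{-\pi}^{\pi}f'(x-s)\,\frac{T_{[x,s]} f}{1+(T_{[x,s]} f/t_{[s]})^2}\,ds,
\]
\[
B(f)[h](x):=\int_{-\pi}^{\pi}h'(x-s)\,\frac{(T_{[x,s]} f/t_{[s]})\,T_{[x,s]} f}{1+(T_{[x,s]} f/t_{[s]})^2}\,ds.
\]
Since $H^1(\s)$ is a Banach algebra and $\|h'\|_{H^1(\s)}\leq\|h\|_{H^2(\s)}$, the operator $h\mapsto h'A(f)$ belongs to $\kL(H^2(\s),H^1(\s))$ with norm $\leq C\|A(f)\|_{H^1(\s)}$ and depends linearly on the factor $A(f)$; hence it suffices to prove that $A\in{\rm C}^{1-}(H^s(\s),H^1(\s))$ and $B\in{\rm C}^{1-}(H^s(\s),\kL(H^2(\s),H^1(\s)))$.

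Throughout I would use the embedding $H^s(\s)\hookrightarrow{\rm C}^{1+\alpha}(\s)$ with $\alpha:=s-3/2\in(0,1/2)$, together with the elementary inequalities $|\tanh a|\leq|a|$, $|t_{[s]}|\geq|s|/2$ on $[-\pi,\pi]$, $2|ab|\leq a^2+b^2$ and $|a^2-b^2|\leq a^2+b^2$. These already yield the uniform (in $x\in\s$ and $s\in[-\pi,\pi]$) kernel estimate
\[
\Big|\frac{T_{[x,s]} f}{1+(T_{[x,s]} f/t_{[s]})^2}\Big|=\frac{|T_{[x,s]} f|\,t_{[s]}^2}{t_{[s]}^2+(T_{[x,s]} f)^2}\leq|T_{[x,s]} f|\leq\tfrac{\pi}{2}\|f'\|_\infty,
\]
and, similarly, $|(T_{[x,s]} f/t_{[s]})T_{[x,s]} f|/(1+(T_{[x,s]} f/t_{[s]})^2)\leq\tfrac12|T_{[x,s]} f|\leq\tfrac{\pi}{4}\|f'\|_\infty$. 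Hence $A(f)\in L_\infty(\s)$ and, by Minkowski's integral inequality, $\|B(f)[h]\|_{L_2(\s)}\leq C(\|f\|_{H^s(\s)})\|h\|_{H^2(\s)}$. For the full $H^1$‑bound on $B$ one differentiates under the integral sign: $\partial_x$ falls either on $h'(x-s)$, producing $h''(x-s)\in L_2(\s)$ tested against the bounded kernel above, or on $T_{[x,s]} f$, and $\partial_x T_{[x,s]} f=\tfrac12 f'(x)\,\mathrm{sech}^2(\delta_{[x,s]}f/2)$ is bounded by $\tfrac12\|f'\|_\infty$ with the new kernel multiplying $h'(x-s)$ again uniformly bounded; this gives $B(f)\in\kL(H^2(\s),H^1(\s))$ with norm bounded by a continuous function of $\|f\|_{H^s(\s)}$.

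The term $A$ is the delicate one: because $f\in H^s(\s)$ with $s<2$ we have $f''\notin L_2(\s)$, so differentiating $A(f)$ directly would create the uncontrollable $\int f''(x-s)(\cdots)\,ds$. I would instead substitute $y=x-s$, write $A(f)(x)=\int_{-\pi}^{\pi}f'(x-s)\,m(x,s)\,ds$ with $m(x,s):=T_{[x,s]} f\,t_{[s]}^2/(t_{[s]}^2+(T_{[x,s]} f)^2)$, exploit the $2\pi$‑periodicity to keep the integration domain equal to $(-\pi,\pi)$, and differentiate, obtaining
\[
A(f)'(x)=\int_{-\pi}^{\pi}f'(x-s)\big[(\partial_1 m)(x,s)+(\partial_2 m)(x,s)\big]\,ds,
\]
where $\partial_1$ and $\partial_2$ denote the partial derivatives of $m$ in its first and second slots. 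Writing $m=T_{[x,s]} f-(T_{[x,s]} f)^3/(t_{[s]}^2+(T_{[x,s]} f)^2)$ one computes $(\partial_1 m)(x,s)=\partial_x T_{[x,s]} f\cdot t_{[s]}^2(t_{[s]}^2-(T_{[x,s]} f)^2)/(t_{[s]}^2+(T_{[x,s]} f)^2)^2$, so $|(\partial_1 m)(x,s)|\leq\tfrac12\|f'\|_\infty$; and a similar, more careful analysis should show that $\partial_2 m$ is uniformly bounded by a continuous function of $\|f\|_{H^s(\s)}$ as well — near $s=0$ the a priori singular contribution $t_{[s]}\,\partial_s(T_{[x,s]} f/t_{[s]})$ is in fact $O(|s|^\alpha)$, since $\delta_{[x,s]}f=f'(x)s+O(|s|^{1+\alpha})$ and $t_{[s]}=s/2+O(|s|^3)$, while near the antipodal point $s=\pm\pi$ the two individually divergent parts of $\partial_s m$ cancel, leaving the bounded limit $\tfrac12 f'(x\mp\pi)\,\mathrm{sech}^2((f(x)-f(x\mp\pi))/2)$. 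It then follows that $A(f)'\in L_\infty(\s)$, hence $A(f)\in H^1(\s)$ with norm controlled by $\|f\|_{H^s(\s)}$. I expect this last verification — establishing that $\partial_2 m$ is genuinely bounded rather than singular of order $|s|^{\alpha-1}$ at $s=0$ or unbounded at the antipodal point — to be the main obstacle; note that, consistently with the remark preceding the lemma, no singular integral operator enters the analysis of $\Phi_2$.

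Finally, the local Lipschitz dependence on $f$ is obtained by running the same estimates on the differences of the (non‑singular) integrands. For $f_1,f_2$ in a ball of radius $R$ in $H^s(\s)$ one uses that $u\mapsto u/(1+u^2)$ together with its first two derivatives is bounded and Lipschitz on bounded sets, that $m$, $\partial_1 m$, $\partial_2 m$ and the kernels arising in $B$ depend on $f$ only through $\delta_{[x,s]}f$, $\delta_{[x,s]}f'$ and $f'(x)$, and the bounds $|\delta_{[x,s]}(f_1-f_2)|\leq\|(f_1-f_2)'\|_\infty|s|$, $|\delta_{[x,s]}(f_1-f_2)'|\leq[(f_1-f_2)']_{{\rm C}^\alpha}|s|^\alpha$ together with $\|(f_1-f_2)'\|_\infty+[(f_1-f_2)']_{{\rm C}^\alpha}\leq C\|f_1-f_2\|_{H^s(\s)}$. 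This yields $\|A(f_1)-A(f_2)\|_{H^1(\s)}+\|B(f_1)-B(f_2)\|_{\kL(H^2(\s),H^1(\s))}\leq C(R)\|f_1-f_2\|_{H^s(\s)}$, and \eqref{MP1} follows from the reduction in the first paragraph.
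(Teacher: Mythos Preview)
Your approach is essentially the paper's: the same splitting $\Phi_2(f)[h]=h'\,\phi_1(f)-\phi_2(f)[h]$ (your $A,B$), the same change-of-variables trick for $A=\phi_1$ to avoid $f''$, and the same Lipschitz argument by estimating differences of the nonsingular integrands. Two remarks are worth making.

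First, a small slip: when you differentiate in $x$ with $s$ held fixed you write $\partial_x T_{[x,s]}f=\tfrac12 f'(x)\,\mathrm{sech}^2(\delta_{[x,s]}f/2)$, but $\delta_{[x,s]}f=f(x)-f(x-s)$ depends on $x$ in both arguments, so the correct formula is $\partial_x T_{[x,s]}f=\tfrac12\big(f'(x)-f'(x-s)\big)\,\mathrm{sech}^2(\delta_{[x,s]}f/2)$. This does not affect your boundedness conclusions (the factor is still controlled by $\|f'\|_\infty$), only the constants.

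Second, and more substantively, the paper treats $B=\phi_2$ differently from you. You differentiate $B(f)[h]$ directly, letting $\partial_x$ hit $h'(x-s)$ and producing $h''\in L_2$; this is fine for the lemma as stated. The paper instead applies the \emph{same} change-of-variables device to $\phi_2$, so that $(\phi_2(f)[h])'$ is expressed purely in terms of $h'(x-s)$ against bounded kernels and no $h''$ appears. This buys the stronger conclusion $\phi_2\in{\rm C}^{1-}(H^s(\s),\kL(H^s(\s),{\rm C}^1(\s)))$, i.e.\ one only needs $h\in H^s$ rather than $h\in H^2$. That extra room is not cosmetic: it is precisely what allows $\phi_2(f)$ to be absorbed as a ``lower order'' term in the localization argument of Theorem~\ref{T1} (Step~3), where one needs $\|\pi_j^p(\phi_2(\tau f)[h])'\|_{L_2}\leq K\|h\|_{H^s}$. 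Your route suffices for Lemma~\ref{L:1} in isolation, but if you were reproducing the full analysis you would eventually want the paper's version for $B$ as well. (Your worries about $\partial_2 m$ near $s=0$ and $s=\pm\pi$ are, incidentally, more than necessary: a direct computation shows each term of $\partial_s m$ is individually bounded, without any cancellation.)
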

\begin{proof} We decompose $\Phi_2(f)[h]=h'\phi_1(f)-\phi_2(f)[h]$, where, given $f,\,  h\in H^s(\s)$,  we  set
 \begin{align}\label{ph1}
  \phi_1(f)(x)&:=\int_{-\pi}^{\pi}f'(x-s)\frac{T_{[x,s]} f}{1+\big(T_{[x,s]} f/t_{[s]}\big)^2 }\, ds,\\[1ex]
  \phi_2(f)[h](x)&:=\int_{-\pi}^{\pi}h'(x-s)\frac{\big(T_{[x,s]} f/t_{[s]}\big) T_{[x,s]} f }{1+\big(T_{[x,s]} f/t_{[s]}\big)^2 }\, ds.\label{ph2}
 \end{align}
Using classical arguments, it follows  that  $\phi_1(f),$ $\phi_2(f)[h]\in {\rm C}^1(\s)$, $i=1,2$,  with
\begin{align*}
  (\phi_1(f))'(x)&=\frac{f'(x)}{2}\int_{-\pi}^{\pi}f'(x-s)\frac{\big[1-(T_{[x,s]} f)^2\big]\big[1-\big(T_{[x,s]} f/t_{[s]}\big)^2\big]}{\big[1+\big(T_{[x,s]} f/t_{[s]}\big)^2 \big]^2}\, ds,\\[1ex]
&\hspace{0.424cm}+\int_{-\pi}^{\pi}f'(x-s)\frac{(1+t_{[s]}^2)(T_{[x,s]} f)^2}{t_{[s]}^2}\frac{T_{[x,s]} f/t_{[s]}}{\big[1+\big(T_{[x,s]} f/t_{[s]}\big)^2 \big]^2}\, ds,
\end{align*}
respectively
\begin{align*}
  (\phi_2(f)[h])'(x)&=f'(x)\int_{-\pi}^{\pi}h'(x-s)\frac{\big[1-(T_{[x,s]} f)^2\big]\big(T_{[x,s]} f/t_{[s]}\big)}{\big[1+\big(T_{[x,s]} f/t_{[s]}\big)^2 \big]^2}\, ds,\\[1ex]
&\hspace{0.424cm}-\frac{1}{2}\int_{-\pi}^{\pi}h'(x-s)\frac{(1+t_{[s]}^2)(T_{[x,s]} f)^2}{t_{[s]}^2}\frac{1-\big(T_{[x,s]} f/t_{[s]}\big)^2}{\big[1+\big(T_{[x,s]} f/t_{[s]}\big)^2 \big]^2}\, ds.
\end{align*}
The previous formulas imply  that $\Phi_2(f)\in\kL(H^2(\s), H^1(\s))$.
To be more precise,  these formulas actually show that 
\begin{align}
 &\phi_1\in {\rm C}^{1-}(H^s(\s), {\rm C}^1(\s)),\label{r0}\\[1ex]
 &\phi_2\in {\rm C}^{1-}(H^s(\s),\kL(H^s(\s), {\rm C}^1(\s))).\label{r1}
\end{align}
The desired local Lipschitz continuity property \eqref{MP1} is a simple consequence of the \eqref{r0}-\eqref{r1}. 
\end{proof}

 Compared to \eqref{MP1}, we have   taken in \eqref{r1}  the variable $h$ from the larger space $H^s(\s),$ with $s\in(3/2,2)$. 
This enables us  later on to identify the operator $\phi_2(f)$   
   as a  ``lower order'' term  in the decomposition \eqref{AFf} of $\Phi(f)$.
 
We next consider the operator $\Phi_1$.
At first glance, the  integral in \eqref{PHI1} is singular and this could hinder us to write the problem \eqref{P} as a quasilinear evolution problem.
Nevertheless, exploiting the  cancellation in a singular integral whose integrand behaves at first order asymptotically for $s\to0$ in the same way as that in \eqref{PHI1},    cf. \eqref{canc}, 
(we are inspired at this point by the analysis in the nonperiodic case \cite{M16x}),
we show below that this singular integral  defines a function in ${\rm C}^1(\s)$ when requiring  merely that $f\in H^s(\R).$
This is an important observation with respect to our goal of expressing $\eqref{P}_1$  as quasilinear evolution equation.   
\begin{lemma}\label{L:2}
Given $s\in(3/2,2)$ and $f\in H^s(\s)$, we let
\begin{equation}\label{po}
  \phi_3(f)(x):=\PV\int_{-\pi}^{\pi}\Big[\frac{f'(x-s)}{t_{[s]}}\frac{T_{[x,s]} f}{t_{[s]}}+\frac{1}{t_{[s]}}  \Big]\frac{1}{1+\big(T_{[x,s]} f/t_{[s]}\big)^2 } \, ds.
 \end{equation}
Then, it holds that 
 \begin{equation}\label{MP2}
  \phi_3\in {\rm C}^{1-}(H^s(\s),{\rm C}^1(\s))\qquad\text{and}\qquad \Phi_1\in {\rm C}^{1-}(H^s(\s),\kL(H^2(\s), H^1(\s))).
 \end{equation}
\end{lemma}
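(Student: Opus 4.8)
My plan is to first establish the regularity $\phi_3\in {\rm C}^{1-}(H^s(\s),{\rm C}^1(\s))$, after which the claim for $\Phi_1$ in \eqref{MP2} follows immediately: by \eqref{PHI1} one has $\Phi_1(f)[h]=h'\phi_3(f)$, and since $h\in H^2(\s)$ forces $h'\in H^1(\s)$ while ${\rm C}^1(\s)$ is a pointwise multiplier of $H^1(\s)$ with $\|gu\|_{H^1(\s)}\le C\|g\|_{{\rm C}^1(\s)}\|u\|_{H^1(\s)}$, we get $\Phi_1(f)\in\kL(H^2(\s),H^1(\s))$ with $\|\Phi_1(f)\|_{\kL(H^2(\s),H^1(\s))}\le C\|\phi_3(f)\|_{{\rm C}^1(\s)}$, and the local Lipschitz continuity of $\Phi_1$ is then inherited from that of $\phi_3$. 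From now on I concentrate on $\phi_3$ and write $\alpha:=s-3/2\in(0,1/2)$, recalling $H^s(\s)\hookrightarrow {\rm C}^{1+\alpha}(\s)$ and $H^{s-1}(\s)\hookrightarrow {\rm C}^\alpha(\s)$, so in particular $f'\in {\rm C}^\alpha(\s)$.

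The first step is to remove the principal value in \eqref{po}. Using $\big[1+(T_{[x,s]} f/t_{[s]})^2\big]^{-1}=t_{[s]}^2\big/\big(t_{[s]}^2+(T_{[x,s]} f)^2\big)$, the integrand in \eqref{po} equals $\big(f'(x-s)T_{[x,s]} f+t_{[s]}\big)\big/\big(t_{[s]}^2+(T_{[x,s]} f)^2\big)$, whose leading asymptotics as $s\to0$ is $1/t_{[s]}$. Since $1/t_{[s]}=1/\tan(s/2)$ is odd on $[-\pi,\pi]$, $\PV\int_{-\pi}^{\pi}ds/t_{[s]}=0$ (this is the cancellation \eqref{canc}), and subtracting this from the integrand yields
\[
\phi_3(f)(x)=\int_{-\pi}^{\pi}N(f)(x,s)\,ds,\qquad N(f)(x,s):=\frac{T_{[x,s]} f\big(t_{[s]}f'(x-s)-T_{[x,s]} f\big)}{t_{[s]}\big(t_{[s]}^2+(T_{[x,s]} f)^2\big)},
\]
where the $\PV$ is now superfluous. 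Using $\delta_{[x,s]}f=\int_{x-s}^{x}f'(\xi)\,d\xi$ together with the elementary inequalities $|\tan(s/2)|\ge|s|/2$ and $|\tanh z|\le|z|$, one checks $|t_{[s]}f'(x-s)-T_{[x,s]} f|\le C(\|f\|_{H^s(\s)})|s|^{1+\alpha}$ near $s=0$, $|T_{[x,s]} f|\le C(\|f\|_{H^s(\s)})|s|$ and $t_{[s]}^2+(T_{[x,s]} f)^2\ge|s|^2/4$; consequently $|N(f)(x,s)|\le C(\|f\|_{H^s(\s)})|s|^{\alpha-1}$, which is integrable, and dominated convergence gives $\phi_3(f)\in {\rm C}(\s)$.

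The core of the argument is $\phi_3(f)\in {\rm C}^1(\s)$. The crucial point is that $N(f)(x,s)$ depends on $f$ only through $\delta_{[x,s]}f$ and $f'(x-s)$, so that, differentiating formally under the integral,
\[
\partial_x N(f)(x,s)=(\partial_\delta N)(f)(x,s)\big(f'(x)-f'(x-s)\big)+\frac{T_{[x,s]} f}{t_{[s]}^2+(T_{[x,s]} f)^2}\,f''(x-s),
\]
the last term coming from $\partial_x\big(t_{[s]}f'(x-s)\big)$. I would then substitute $f''(x-s)=-\partial_s\big[f'(x-s)\big]$ and integrate by parts in $s$ over $\{\,\e\le|s|\le\pi\,\}$ — the integration-by-parts device of \cite{M16x} — the boundary terms at $s=\pm\pi$ vanishing because $1/t_{[s]}\to0$ there, and those at $s=\pm\e$ being singular as $\e\to0$. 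Since $\partial_s(T_{[x,s]} f)=\frac12\big(1-(T_{[x,s]} f)^2\big)f'(x-s)$ and $\partial_s t_{[s]}=\frac12(1+t_{[s]}^2)$ involve only $f'$, the integration by parts introduces no further second derivative, and after passing to the limit $\e\to0$ and collecting terms one should arrive at a representation $\phi_3(f)'(x)=\int_{-\pi}^{\pi}\kN(f)(x,s)\,ds$ with $\kN(f)$ built only from $f$, $f'$, $\delta_{[x,s]}f$ and $t_{[s]}$. Although several of the terms appearing in this computation are, individually, of order $|s|^{-2}$ and hence not integrable, the same cancellation mechanism that controlled $N(f)$ — in more elaborate form, and in particular the exact compensation of the $\e$-boundary terms by the $\e$-singular parts of the remaining integrals — should force $|\kN(f)(x,s)|\le C(\|f\|_{H^s(\s)})|s|^{\alpha-1}$. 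A careful difference-quotient argument (carrying out the same integration by parts at the level of finite differences) then confirms that this is the classical derivative of $\phi_3(f)$ and that it is continuous in $x$, hence $\phi_3(f)\in {\rm C}^1(\s)$. I expect this reorganization of cancellations to be the main obstacle: it is precisely the place where the harmonic-analysis input used in the nonperiodic case of \cite{M16x} has no periodic counterpart and the estimates must be produced by hand.

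Finally, the local Lipschitz continuity $\phi_3\in {\rm C}^{1-}(H^s(\s),{\rm C}^1(\s))$ should be routine: the kernels $N(f)$ and $\kN(f)$ are obtained by composing $f$, $f'$ and $\delta_{[x,s]}f$ with maps smooth on the relevant ranges, so the differences $N(f_1)-N(f_2)$ and $\kN(f_1)-\kN(f_2)$ can be estimated factor by factor via $H^s(\s)\hookrightarrow {\rm C}^{1+\alpha}(\s)$, $H^{s-1}(\s)\hookrightarrow {\rm C}^\alpha(\s)$, the bound $|\delta_{[x,s]}(f_1-f_2)|\le|s|\,\|f_1'-f_2'\|_\infty$ and the uniform lower bound $t_{[s]}^2+(T_{[x,s]} f)^2\ge|s|^2/4$; this produces $\|\phi_3(f_1)-\phi_3(f_2)\|_{{\rm C}^1(\s)}\le C\big(\|f_1\|_{H^s(\s)},\|f_2\|_{H^s(\s)}\big)\|f_1-f_2\|_{H^s(\s)}$, which together with the reduction in the first paragraph yields \eqref{MP2}.
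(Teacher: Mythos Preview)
Your reduction of $\Phi_1$ to $\phi_3$ and your removal of the $\PV$ by subtracting $1/t_{[s]}$ are both fine, and your bound $|N(f)(x,s)|\le C|s|^{\alpha-1}$ is correct. The genuine gap is in the ${\rm C}^1$ step, which is the heart of the lemma: you only \emph{describe} a plan (formally differentiate, then integrate the $f''(x-s)$ term by parts) and then assert that the resulting non-integrable pieces ``should'' recombine to an $O(|s|^{\alpha-1})$ integrand. You do not verify this, and you explicitly label it ``the main obstacle''. Since $f\in H^s(\s)$ with $s<2$, the function $f''$ is not classical, so none of the intermediate expressions you write down make sense individually; the cancellation you hope for has to be exhibited, not postulated. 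Two further points: (a) the identity you call ``the cancellation \eqref{canc}'' is merely the oddness of $1/t_{[s]}$, which is trivial; the actual \eqref{canc} in the paper is the nontrivial identity
\[
\PV\int_{-\pi}^{\pi}\frac{s+f'(x-s)\,\delta_{[x,s]}f}{s^2+(\delta_{[x,s]}f)^2}\,ds=0
\]
from \cite{M16x}, and it is precisely this that does the work here, not in $\Phi_0$; (b) your remark that ``the harmonic-analysis input of \cite{M16x} has no periodic counterpart'' refers to Murai's theorem, which enters in the analysis of $\Phi_0$, not of $\phi_3$.

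The paper's proof avoids your obstacle entirely by a different decomposition. One writes $\phi_3=\phi_{3a}+\phi_{3b}+\tfrac12\phi_{3c}$ by replacing $T_{[x,s]}f$ by $\delta_{[x,s]}f/2$ (error $\phi_{3a}$, using $|\tanh z-z|\le|z|^3$) and $t_{[s]}$ by $s/2$ (error $\phi_{3b}$, using $|\tan z-z|\le|z|\tan^2 z$), leaving $\phi_{3c}(f)(x)=\PV\int(s+f'(x-s)\delta_{[x,s]}f)/(t_{[s]}^2+(T_{[x,s]}f)^2)\,ds$. Now the genuine \eqref{canc} lets one subtract the analogous integral with denominator $s^2+(\delta_{[x,s]}f)^2$ (which is zero), turning $\phi_{3c}$ into a nonsingular integral as well. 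After this, each of the three pieces has an integrand depending on $x$ only through $f(x)$ and $f'(x-s)$; substituting $u=x-s$ before differentiating in $x$ makes the factor $f'(u)$ independent of $x$, so $f''$ never appears, and one obtains explicit formulas for $(\phi_{3a})',(\phi_{3b})',(\phi_{3c})'$ whose integrands are \emph{manifestly} bounded near $s=0$ --- no further cancellation is needed. The local Lipschitz continuity then follows exactly as you outline. If you want to salvage your one-piece kernel $N(f)$, the cleanest fix is to use this same change of variables $u=x-s$ \emph{before} differentiating rather than your IBP route; you will still have to check a leading-order cancellation between the $\partial_\delta$-term and the $\partial_s$-term of $\tilde N$, but this is easier than tracking $\varepsilon$-boundary contributions.
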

\begin{proof}
 Since $f\in {\rm C}^{s-1/2}(\s)$, standard arguments show that the limit $\phi_3(f)(x)$ exists  (in the sense of $\PV$) for all $x\in\R$.
We now decompose the function $\phi_3(f)$ as follows.
Using the relations
\begin{equation}
|\tanh(x)-x|\leq |x|^3 ,\, \, x\in\R, \qquad\text{and}\qquad|\tan (x)-x|\leq |x|\tan^2(x),\, \, |x|<\pi/2, \label{aa}
\end{equation}
we may write 
\begin{equation*}
 \phi_3(f)=\phi_{3a}(f)+\phi_{3b}(f)+\frac{1}{2}\phi_{3c}(f),
\end{equation*}
where
\begin{align}
 \phi_{3a}(f)(x)&:=\int_{-\pi}^{\pi}\frac{f'(x-s)}{(t_{[s]})^2+ (T_{[x,s]} f )^2 }\Big[\tanh\Big(\frac{\delta_{[x,s]}f}{2}\Big)-\frac{\delta_{[x,s]}f}{2}\Big] \, ds,\label{pa}\\[1ex]
 \phi_{3b}(f)(x)&:=\int_{-\pi}^{\pi}\frac{1}{(t_{[s]})^2+ (T_{[x,s]} f )^2 }\Big[\tan \Big(\frac{s}{2}\Big)-\frac{s}{2}\Big] \, ds,\label{pb}\\[1ex]
 \phi_{3c}(f)(x)&:=\PV\int_{-\pi}^{\pi}\frac{s+f'(x-s)\delta_{[x,s]}f  }{(t_{[s]})^2+ (T_{[x,s]} f )^2 } \, ds. \nonumber
\end{align}
Using the relation 
\begin{equation}
\PV\int_{-\pi}^{\pi}\frac{s+f'(x-s)(\delta_{[x,s]}f)}{s^2+ (\delta_{[x,s]}f )^2 } \, ds=0,\label{canc}
\end{equation}
see \cite{M16x}, we may express $\phi_{3c}(f)$ as 
\begin{align}
 \phi_{3c}(f)(x)&=\int_{-\pi}^{\pi}[s+f'(x-s)(\delta_{[x,s]}f)]\Big[\frac{1}{(t_{[s]})^2+ (T_{[x,s]} f )^2 }-\frac{4}{s^2+ (  \delta_{[x,s]}f )^2 }\Big] \, ds.\label{pc'}
\end{align}
Due to \eqref{aa}, the principal value is not needed in \eqref{pa}, \eqref{pb}, and \eqref{pc'}.
Furthermore,  \eqref{aa} leads us to the conclusion that $\phi_{3a}(f),$ $\phi_{3b}(f),$ $\phi_{3c}(f) \in {\rm C}^1(\s),$ with
\begin{align*} 
 (\phi_{3a}(f))'(x)&=-f'(x)\int_{-\pi}^{\pi}f'(x-s)\frac{(T_{[x,s]} f)\big[1-(T_{[x,s]} f)^2 \big]}{\big[t_{[s]}^2+(T_{[x,s]} f)^2\big]^2 }\Big[T_{[x,s]} f-\frac{\delta_{[x,s]}f}{2}\Big] \, ds\\[1ex]
 &\hspace{0.424cm}-\int_{-\pi}^{\pi}f'(x-s)\frac{t_{[s]}\big[1+(t_{[s]})^2 \big]}{\big[t_{[s]}^2+(T_{[x,s]} f)^2\big]^2 }\Big[T_{[x,s]} f-\frac{\delta_{[x,s]}f}{2}\Big] \, ds\\[1ex]
 &\hspace{0.424cm}-\frac{f'(x)}{2}\int_{-\pi}^{\pi}f'(x-s)\frac{ (T_{[x,s]} f )^2}{(t_{[s]})^2+ (T_{[x,s]} f )^2 }  \, ds,\\[2ex]
(\phi_{3b}(f))'(x)&=- \int_{-\pi}^{\pi}\delta_{[x,s]}f'\frac{t_{[s]}-(s/2)}{\big[t_{[s]}^2+(T_{[x,s]} f)^2\big]^2 }(T_{[x,s]} f)\big[1-(T_{[x,s]} f)^2\big] \, ds,
\end{align*}
respectively
\begin{align*}
 (\phi_{3c}(f))'(x)
 &= \int_{-\pi}^{\pi}s \,\delta_{[x,s]}f'\Big[\frac{8(\delta_{[x,s]}f)}{\big[s^2+ (  \delta_{[x,s]}f )^2\big]^2 }-\frac{(T_{[x,s]} f)\big[1-(T_{[x,s]} f)^2 \big]}{\big[t_{[s]}^2+(T_{[x,s]} f)^2\big]^2 }\Big]\, ds\\[1ex]
 &\hspace{0.424cm}+f'(x)\int_{-\pi}^{\pi}f'(x-s)\Big[\frac{1}{(t_{[s]})^2+ (T_{[x,s]} f )^2 }-\frac{4}{s^2+ (  \delta_{[x,s]}f )^2 }\Big] \, ds\\[1ex]
 &\hspace{0.424cm}+ \int_{-\pi}^{\pi}f'(x-s)\,\delta_{[x,s]}f \Big[\frac{8s}{\big[s^2+ (  \delta_{[x,s]}f )^2 \big]^2}-\frac{t_{[s]}(1+(t_{[s]})^2) }{\big[(t_{[s]})^2+ (T_{[x,s]} f )^2\big]^2 }   \, ds\\[1ex]
   &\hspace{4.25cm}\quad +\frac{8f'(x)(\delta_{[x,s]}f)}{\big[s^2+ (  \delta_{[x,s]}f )^2 \big]^2}-\frac{ f'(x)(T_{[x,s]} f ) (1+(T_{[x,s]} f )^2)}{\big[(t_{[s]})^2+ (T_{[x,s]} f )^2\big]^2 } \Big]\, ds.
\end{align*}
Noticing that $\Phi_1(f) = \phi_3(f)\p_x$, the local Lipschitz continuity properties stated at \eqref{MP2} are direct consequences of the formulas above.  
\end{proof}

We now turn our attention to the singular integral operator $\Phi_0 (f)$ defined \eqref{PHI0}. 
\begin{lemma}\label{L:3}
It holds that 
 \begin{align}\label{MP4}
   \Phi_0\in {\rm C}^{1-}(H^s(\s),\kL(H^2(\s), H^1(\s))).
 \end{align}
\end{lemma}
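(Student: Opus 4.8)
The plan is to isolate from $\Phi_0$ the genuinely singular, ``real-line-like'' contribution and to reduce it, by localization, to the singular integral operators on $\R$ studied in \cite{M16x}, whose mapping and continuity properties ultimately rest on the deep Cauchy-integral estimate \cite{TM86}; all the remaining contributions will have kernels that are bounded near the diagonal and are handled by direct estimates. Rewriting the kernel in \eqref{PHI0} as $t_{[s]}^{-1}\bigl(1+(T_{[x,s]}f/t_{[s]})^2\bigr)^{-1}=t_{[s]}\bigl(t_{[s]}^2+(T_{[x,s]}f)^2\bigr)^{-1}$ and fixing $\chi\in {\rm C}^\infty_0((-\pi,\pi))$ with $0\le\chi\le1$ and $\chi\equiv1$ on a neighbourhood of $0$, we split $\Phi_0(f)=\Phi_0^{1-\chi}(f)+\Phi_0^{\chi}(f)$ according to whether $1-\chi(s)$ or $\chi(s)$ is inserted into the integrand of \eqref{PHI0}.

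For $\Phi_0^{1-\chi}(f)$ the kernel $t_{[s]}\bigl(t_{[s]}^2+(T_{[x,s]}f)^2\bigr)^{-1}\le t_{[s]}^{-1}$ is bounded on $\supp(1-\chi)$ (there $|s|$ stays away from $0$, and near $s=\pm\pi$ one has $t_{[s]}^{-1}=\cot(|s|/2)\to0$), it is of class ${\rm C}^1$ in $x$ there with a bound on its $x$-derivative depending only on $\|f\|_{{\rm C}^1(\s)}$, and it depends locally Lipschitz continuously on $f\in H^s(\s)\hookrightarrow {\rm C}^1(\s)$ through $\delta_{[x,s]}f$ and $T_{[x,s]}f$. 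Splitting $\delta_{[x,s]}h'=h'(x)-h'(x-s)$, substituting $s\mapsto x-s$ in the term carrying $h'(x-s)$, and differentiating in $x$, one sees that the only derivative falling on $h$ is $h''\in L_2(\s)$; a routine Schur/Minkowski argument then gives $\Phi_0^{1-\chi}\in {\rm C}^{1-}\bigl(H^s(\s),\kL(H^2(\s),H^1(\s))\bigr)$ (in fact $\Phi_0^{1-\chi}\in {\rm C}^{1-}\bigl(H^s(\s),\kL(H^1(\s),H^1(\s))\bigr)$).

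For $\Phi_0^{\chi}(f)$ we compare, for $|s|<\pi$, the periodic kernel with its real-line counterpart via the telescoping identity
\begin{align*}
\frac{t_{[s]}}{t_{[s]}^2+(T_{[x,s]}f)^2}-\frac{2s}{s^2+(\delta_{[x,s]}f)^2}
&=\frac{t_{[s]}-s/2}{t_{[s]}^2+(T_{[x,s]}f)^2}
+\frac s2\,\frac{(\delta_{[x,s]}f/2)^2-(T_{[x,s]}f)^2}{\bigl(t_{[s]}^2+(T_{[x,s]}f)^2\bigr)\bigl(t_{[s]}^2+(\delta_{[x,s]}f/2)^2\bigr)}\\
&\quad+\frac s2\,\frac{(s/2)^2-t_{[s]}^2}{\bigl(t_{[s]}^2+(\delta_{[x,s]}f/2)^2\bigr)\bigl((s/2)^2+(\delta_{[x,s]}f/2)^2\bigr)}.
\end{align*}
Using \eqref{aa}, namely $|t_{[s]}-s/2|\le|s/2|\,t_{[s]}^2$ and $|T_{[x,s]}f-\delta_{[x,s]}f/2|\le|\delta_{[x,s]}f/2|^3$, together with $|\delta_{[x,s]}f|\le\|f'\|_\infty|s|$ and $t_{[s]}\ge|s|/2$, each of the three terms on the right is bounded on $\supp\chi$ by a constant (depending on $\|f'\|_\infty$) times $|s|$, is of class ${\rm C}^1$ in $x$ with $x$-derivative controlled by $\|f\|_{{\rm C}^1(\s)}$, and depends locally Lipschitz continuously on $f\in H^s(\s)$. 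Hence, inserting $\chi(s)$ times this difference into the integrand of \eqref{PHI0} yields, exactly as for $\Phi_0^{1-\chi}$, an operator in ${\rm C}^{1-}\bigl(H^s(\s),\kL(H^2(\s),H^1(\s))\bigr)$, and it only remains to treat the principal term
\[
\Psi(f)[h](x):=2\,\PV\int_{-\pi}^{\pi}\chi(s)\bigl(h'(x)-h'(x-s)\bigr)\frac{s}{s^2+(\delta_{[x,s]}f)^2}\,ds.
\]
To reduce $\Psi$ to the real line, fix $\psi\in {\rm C}^\infty_0(\R)$ with $\psi\equiv1$ on $[-2\pi,2\pi]$, view $f\in H^s(\s)$ and $h\in H^2(\s)$ as $2\pi$-periodic functions on $\R$, and set $F:=\psi f\in H^s(\R)$, $H:=\psi h\in H^2(\R)$. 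For $x\in[-\pi,\pi]$ and $s\in\supp\chi$ one has $x,x-s\in[-2\pi,2\pi]$, so $\delta_{[x,s]}f=\delta_{[x,s]}F$ and $h'(x)-h'(x-s)=H'(x)-H'(x-s)$, whence for such $x$
\[
\Psi(f)[h](x)=2\,\PV\int_\R\bigl(H'(x)-H'(x-s)\bigr)\frac{s}{s^2+(\delta_{[x,s]}F)^2}\,ds
-2\int_\R\bigl(1-\chi(s)\bigr)\bigl(H'(x)-H'(x-s)\bigr)\frac{s}{s^2+(\delta_{[x,s]}F)^2}\,ds.
\]
The first integral is twice the real-line singular integral operator from \eqref{CIF} (acting on $H'$ in the numerator difference, with $F$ in the denominator), which by the results of \cite{M16x} --- ultimately based on \cite{TM86} --- belongs to ${\rm C}^{1-}\bigl(H^s(\R),\kL(H^2(\R),H^1(\R))\bigr)$. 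In the second integral $1-\chi$ vanishes near $0$, for $|s|$ so large that $x-s\notin\supp\psi$ the integrand reduces to the odd function $s\mapsto H'(x)\,s/(s^2+F(x)^2)$ (whose improper integral vanishes), and the remaining contribution is a bounded-kernel operator over a bounded range of $s$, treated as before. Restricting to $[-\pi,\pi]$ and using $2\pi$-periodicity gives $\Psi\in {\rm C}^{1-}\bigl(H^s(\s),\kL(H^2(\s),H^1(\s))\bigr)$, and \eqref{MP4} follows.

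The crux of the argument is this last reduction: since there is no periodic counterpart of the Cauchy-integral estimate \cite{TM86}, the $L_2$-boundedness and the $f$-continuity of the singular part of $\Phi_0$ cannot be obtained intrinsically and must be transported from the real line, which forces the localization/extension argument together with a careful bookkeeping of the cut-off tails, including the conditional convergence at infinity of the odd kernel; establishing the local Lipschitz dependence on $f$ amounts to running the whole decomposition once more with the corresponding difference kernels.
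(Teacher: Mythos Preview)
Your argument is correct and follows essentially the same strategy as the paper's proof: split off a non-singular remainder by comparing the periodic kernel $t_{[s]}/\bigl(t_{[s]}^2+(T_{[x,s]}f)^2\bigr)$ with its real-line analogue $2s/\bigl(s^2+(\delta_{[x,s]}f)^2\bigr)$, and then reduce the remaining singular piece to the operators on $\R$ from \cite{M16x} via a compactly supported extension of $f$ and $h$. The organizational details differ: the paper dispenses with your extra cutoff $\chi$ and works directly on $(-\pi,\pi)$, writing $\Phi_0=\Phi_a-\Phi_b+2\Phi_c$ where $\Phi_a(f)[h]=h'\phi_4(f)$ and $\Phi_b$ carry the kernel difference (your telescoping identity) while $\Phi_c$ is the real-line-like singular integral; it also separates the $h'(x)$ contribution early (producing the scalar multipliers $\phi_4,\phi_5$) rather than keeping $\delta_{[x,s]}h'$ together as you do. Both routes land on the same reduction---full integral over $\R$ minus a tail over $|s|>\pi$---and both ultimately invoke the $L_2$-boundedness results collected in Lemma~\ref{L:A1}. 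One point you pass over quickly is the step from $H^1((-\pi,\pi))$ to $H^1(\s)$ after restricting; the paper isolates this as Lemma~\ref{L:A2}, and your phrase ``using $2\pi$-periodicity'' is exactly that lemma.
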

\begin{proof}
 It is convenient to write
 \begin{align*}
  \Phi_0=\Phi_{a}-\Phi_{b} +2\Phi_{c},
 \end{align*}
where
\begin{align}
 \Phi_{a}(f)[h]&:=     h'\phi_4(f) ,\label{PHIa}\\[1ex]
 \Phi_{b}(f)[h](x)&:=     \int_{-\pi}^{\pi} h'(x-s)\Big[\frac{1}{t_{[s]}}\frac{1}{1+\big(T_{[x,s]} f/t_{[s]}\big)^2} -\frac{1}{s/2}\frac{1}{1+\big(\delta_{[x,s]}f/s\big)^2}\Big]\, ds,\label{PHIb}\\[1ex]
 \Phi_{c}(f)[h](x)&:= \PV\int_{-\pi}^{\pi}\frac{\delta_{[x,s]}h'}{s}\frac{1}{1+\big(\delta_{[x,s]}f/s\big)^2} \, ds,\label{PHIc}
\end{align}
and
\begin{align}\label{p4}
 \phi_4(f)(x):=\int_{-\pi}^{\pi}\Big[\frac{1}{t_{[s]}}\frac{1}{1+\big(T_{[x,s]} f/t_{[s]}\big)^2} -\frac{1}{s/2}\frac{1}{1+\big(\delta_{[x,s]}f/s\big)^2}\Big]\, ds.
\end{align}
The relation \eqref{aa} shows the principal value is not needed in \eqref{PHIb} and \eqref{p4}.
Similarly as in the Lemmas \ref{L:1}-\ref{L:2}, it follows  that  
\begin{align}\label{p4a}
 &\phi_4\in {\rm C}^{1-}(H^s(\s), {\rm C}^1(\s)),\\[1ex]
& \Phi_{b}\in {\rm C}^{1-}(H^s(\s),\kL(H^s(\s), {\rm C}^1(\s))).\label{p4b}
\end{align}

In order to deal with the operator $\Phi_c$ we introduce for each $h\in H^k(\s)$, $k\in \N$, the function 
$\wt h:=h\varphi$, where $\varphi\in {\rm C}^\infty_0(\R,[0,1])$ is a fixed function chosen such that $\varphi=1$ for $|x|\leq 2\pi$ and $\varphi=0$ for $|x|\geq 4\pi.$
Then, $\wt h\in H^k(\R)$ and there exists $C=C(k)$ such that
\begin{align}\label{LP}
 \text{$\|h\|_{H^k(\s)}\leq \|\wt h\|_{H^k(\R)}\leq C\|h\|_{H^k(\s)}$\qquad for all $h\in H^k(\s).$}
\end{align}
For later use, we notice that \eqref{LP} together with  interpolation property  \eqref{IP} implies that for each $r\geq0$ there exists $C=C(r)$ such that 
 \begin{align}\label{LP1}
 \|\wt h\|_{H^{r}(\R)}\leq C \|h\|_{H^r(\s)} \qquad\text{for all $h\in H^r(\s).$}
 \end{align}
 
It is obvious that for each $f,\, h\in H^s(\s)$ the  function  $\Phi_c(f)[h]$ is  well-defined and it belongs to ${\rm C}(\s)$ (the $\PV$ is actually  not needed).
Given $x\in(-\pi,\pi)$, we may write 
\begin{align}\label{E:RRRF}
 \Phi_{c}(f)[h](x)&= \PV\int_{-\pi}^{\pi}\frac{\delta_{[x,s]}{\wt h}'}{s}\frac{1}{1+\big(\delta_{[x,s]}f/s\big)^2} \, ds=A_1(f)[\wt h](x)- A_2(f)[ h](x),
\end{align}
where
\begin{align*}
 A_1(f)[\wt h](x)&:=\PV\int_{\R}\frac{\delta_{[x,s]}{\wt h}'}{s}\frac{1}{1+\big(\delta_{[x,s]}f/s\big)^2} \, ds,\\[1ex]
 A_2(f)[h](x)&:=\PV\int_{|s|>\pi}\frac{\delta_{[x,s]}{ (\varphi h)}'}{s}\frac{1}{1+\big(\delta_{[x,s]}f/s\big)^2} \, ds.
\end{align*}
The latter formulas make sense for arbitrary $x\in\R$ and the principal value needs to be taken also at infinity.
Our first goal is to prove that  $A_1(f)[\wt h], A_2(f)[ h]\in H^1((-\pi,\pi))$. Having shown this property,  \eqref{E:RRRF} together with  $\Phi_c(f)[h]\in {\rm C}(\s)$ implies that $\Phi_{c}(f)[h]\in H^1(\s)$ and that
$(\Phi_{c}(f)[h])'$ is the periodic extension of  $(A_1(f)[\wt h]-A_2(f)[ h])'$ (see Lemma \ref{L:A2}).

 Arguing as in the proof of \cite[Lemma 3.5]{M16x}, it follows from Lemma \ref{L:A1}, that  
\begin{align}\label{gga}
 A_1\in {\rm C}^{1-}(H^s(\s),\kL(H^2(\R), H^1(\R))), 
\end{align}
with
\begin{equation}\label{FO}
\begin{aligned}
 (A_1(f)[\wt h])'(x)& =\PV\int_{\R}\frac{\delta_{[x,s]}{\wt h}''}{s}\frac{1}{1+\big(\delta_{[x,s]}f/s\big)^2} \, ds\\[1ex]
 &\hspace{0.424cm}-2\PV\int_{\R}\frac{\big[\delta_{[x,s]}f/s\big]\big[(\delta_{[x,s]}f'/s\big]\big[\delta_{[x,s]}\wt h'/s\big]}{\big[1+\big(\delta_{[x,s]}f/s\big)^2\big]^2}\, ds.
\end{aligned}
\end{equation}

We now turn our attention to the operator $A_2$, which we decomposed as the difference
\[
 A_2 =A_{2a} -A_{2b},
\]
where, given $x\in(-\pi,\pi)$, we have set
\begin{align*}
A_{2a}(f)[h](x)&:=h'(x) \PV\int_{|s|>\pi}\frac{1}{s}\frac{1}{1+\big(\delta_{[x,s]}f/s\big)^2} \, ds,\\[1ex]
A_{2b}(f)[h](x)&:=\int_{|s|>\pi}\frac{(\varphi h)'(x-s)}{s}\frac{1}{1+\big(\delta_{[x,s]}f/s\big)^2} \, ds)s.
\end{align*}
It can be easily verified that  the function 
\begin{equation}\label{p5}
\begin{aligned}
 \phi_5(f)(x)&:=\PV\int_{|s|>\pi}\frac{1}{s}\frac{1}{1+\big(\delta_{[x,s]}f/s\big)^2} \, ds\\[1ex]
 &\phantom{:}=\int_\pi^\infty\frac{1}{s^3}\frac{(\delta_{[x,-s]}f)^2-(\delta_{[x,s]}f)^2}{[1+\big(\delta_{[x,s]}f/s\big)^2][1+\big(\delta_{[x,-s]}f/s\big)^2]} \, ds 
\end{aligned}
\end{equation}
satisfies 
\begin{align}\label{r5}
 \phi_5\in {\rm C}^{1-}(H^s(\s), {\rm BC}^1(\R)),
\end{align}
and therewith
\[
A_{2a}\in {\rm C}^{1-}(H^s(\s), \kL(H^2(\s), H^1((-\pi,\pi))).
\]
Concerning $A_{2b}$, we recall that  $\supp\varphi\subset[-4\pi,4\pi]$, meaning that  
\begin{align}\label{gg2a}
A_{2b}(f)[h](x)&=\int\limits_{\pi<|s|<5\pi}\frac{(\varphi h)'(x-s)}{s}\frac{1}{1+\big(\delta_{[x,s]}f/s\big)^2} \, ds\qquad\text{for $x\in(-\pi,\pi)$}.
\end{align}
In view of \eqref{gg2a}, we find  that the function $A_{2b}(f)[h]$ is differentiable on $(-\pi,\pi) $ for each $h\in H^s(\s),$ with
\begin{equation}\label{FOO}
\begin{aligned}
 (A_{2b}(f)[h])'(x)&= -2 f'(x)\int\limits_{\pi<|s|<5\pi}\frac{(\varphi h)'(x-s)}{s^2}\frac{ \delta_{[x,s]}f/s }{\big[1+\big(\delta_{[x,s]}f/s\big)^2\big]^2} \, ds\\[1ex]
 &\hspace{0.424cm} -\int\limits_{\pi<|s|<5\pi}\frac{(\varphi h)'(x-s)}{s^2}\frac{ 1-\big(\delta_{[x,s]}f/s\big)^2 }{\big[1+\big(\delta_{[x,s]}f/s\big)^2\big]^2} \, ds,
\end{aligned}
\end{equation}
and therewith we find
\begin{align}\label{ggb}
 A_{2b}\in {\rm C}^{1-}(H^s(\s), \kL(H^s(\s), {\rm C}^1([-\pi,\pi]))).
\end{align}
The desired claim \eqref{MP4} is a direct consequence of \eqref{p4a}, \eqref{p4b}, \eqref{E:RRRF}, \eqref{gga}, \eqref{gg2a}, and \eqref{ggb}. 
\end{proof}

 %%%%%%%%%%%%%%%%%%%%%%%%%%%%%%%%%%%%%%%%%%%%%%%%%%%%%%%%%%%%%%%%%%%
%%%%%%%%%%%%%%%%%%%%%%%%%%%%%%%%%%%%%%%%%%%%%%%%%%%%%%%%%%%%%%%%%%%%
%%%%%%%%%%%%%%%%%%%%%%%%%%%%%%%%%%%%%%%%%%%%%%%%%%%%%%%%%%%%%%%%%%%%
%%%%%%%%%%%%%%%%%%%%%%%%%%%%%%%%%%%%%%%%%%%%%%%%%%%%%%%%%%%%%%%%%%%
%%%%%%%%%%%%%%%%%%%%%%%%%%%%%%%%%%%%%%%%%%%%%%%%%%%%%%%%%%%%%%%%%%%%
%%%%%%%%%%%%%%%%%%%%%%%%%%%%%%%%%%%%%%%%%%%%%%%%%%%%%%%%%%%%%%%%%%%%
\section{The sectoriality property of the principal part}\label{S4}
 %%%%%%%%%%%%%%%%%%%%%%%%%%%%%%%%%%%%%%%%%%%%%%%%%%%%%%%%%%%%%%%%%%%
%%%%%%%%%%%%%%%%%%%%%%%%%%%%%%%%%%%%%%%%%%%%%%%%%%%%%%%%%%%%%%%%%%%%
%%%%%%%%%%%%%%%%%%%%%%%%%%%%%%%%%%%%%%%%%%%%%%%%%%%%%%%%%%%%%%%%%%%%
%%%%%%%%%%%%%%%%%%%%%%%%%%%%%%%%%%%%%%%%%%%%%%%%%%%%%%%%%%%%%%%%%%%
%%%%%%%%%%%%%%%%%%%%%%%%%%%%%%%%%%%%%%%%%%%%%%%%%%%%%%%%%%%%%%%%%%%%
%%%%%%%%%%%%%%%%%%%%%%%%%%%%%%%%%%%%%%%%%%%%%%%%%%%%%%%%%%%%%%%%%%%%
The first goal of this section is to prove   that the  operator $\Phi(f)$,
regarded as an unbounded operator in $H^1(\s)$ with definition domain $H^2(\s)$ is, for each $f\in H^s(\s)$, $s\in(3/2,2)$,
the generator of a strongly continuous  analytic semigroup in $\kL(H^1(\s))$, that is (in the notation used in \cite{Am95})
\begin{align*} 
  -\Phi(f)\in \kH(H^2(\s), H^1(\s)).
 \end{align*}
To this end it suffices  to show that  the complexification of this unbounded  operator, which we denote again by $\Phi(f)$, that is the operator
 \[  \big[h=u+iv\mapsto \Phi(f)[u]+i\Phi(f)[v]\big]:H^2(\s,\C)\subset H^1(\s,\C)\to H^1(\s,\C),\]
 generates such a  semigroup in $\kL(H^1(\s,\C))$, see \cite[Corollary 2.1.3]{L95}\footnote{The variable $h$ is in the following complex valued, while $f$ is arbitrary (but fixed) and real-valued. 
Having made this convention, we use  $H^s(\s)$ to denote both Sobolev spaces of real- or complex-valued functions. 
This applies in the entire section, excepting the proof of Theorem \ref{MT1} where we come back to the setting of 
real-valued Sobolev functions considered in all of the other sections.}. 

\noindent The desired generator property is established in Theorem \ref{T:1}.
In fact,    we only need to prove that there  exist constants $\omega>0$ and $\kappa\geq1$ such that 
 \begin{align}\label{13K}
&\omega- \Phi(f)\in{\rm Isom}(H^2(\s), H^1(\s))
\end{align}
and
 \begin{align}
\label{14K}
& \kappa\|(\lambda-\Phi(f))[h]\|_{H^1(\s)}\geq  |\lambda|\cdot\|h\|_{H^1(\s)}+\|h\|_{H^2(\s)}
\end{align}
for all  $\lambda\in\C$ with $\re \lambda\geq \omega$  and $h\in H^2(\s)$, cf. \cite{Am95}.
We end  the section by presenting the proof of our first main result  Theorem \ref{MT1}.

To start, we choose for each  integer $p\geq 3$  a set   $\{\pi_j^p\,:\,{1\leq j\leq 2^{p+1}}\}\subset  {\rm C}^\infty(\s,[0,1])$ such that
\begin{align*}
\bullet\,\,\,\, \,\, & \text{$\supp \pi_j^p=\cup_{n\in\Z} \big(2\pi n+ I_j^p\big)$ and $I_j^p:=[j-5/3,j-1/3] \frac{\pi}{2^p};$}\\[1ex]
 \bullet\,\,\,\, \,\, & \text{ $\sum_{j=1}^{2^{p+1}}\pi_j^p=1 $ in ${\rm C}(\s)$.}
\end{align*}
We call  $\{\pi_j^p\,:\,{1\leq j\leq 2^{p+1}}\}$  a {\em $p$-partition of unity}.
Moreover, let $\{\chi_j^p\,:\,{1\leq j\leq 2^{p+1}}\}\subset  {\rm C}^\infty(\s,[0,1])$ be an associated set of functions such that 
\begin{align}
\bullet\,\,\,\, \,\, & \text{$\supp \chi_j^p=\cup_{n\in\Z} \big(2\pi n+ J_j^p\big)$ with  $I_j^p\subset J_j^p:=[j-8/3,j+2/3] \frac{\pi}{2^p}$;}\label{JJJ0}\\[1ex]
 \bullet\,\,\,\, \,\, & \text{ $\chi_j^p=1$ on $I_j^p$.}\label{JJJ1}
\end{align}
The following remark is a simple exercise.

\begin{rem}\label{R:1}
 Let $k, p\in\N$ with $ p\geq 3$ be given and let $\{\pi_j^p\,:\,{1\leq j\leq 2^{p+1}}\}$ be a  $p$-partition of unity.
 The mapping
$$\big[h\mapsto \max_{1\leq j\leq 2^{p+1}} \|\pi_j^p h\|_{H^k(\s)}\big]: H^k(\s)\to\R$$
defines a norm on $H^{k}(\s)$ which is  equivalent to the standard  Sobolev norm. 
\end{rem}

With respect to our goal \eqref{13K}, it is convenient to introduce the continuous path 
\[[\tau\mapsto \Phi(\tau f)]:[0,1]\to \kL(H^2(\s), H^1(\s)),\]
which transforms the operator $\Phi(f)$ into the well-known  operator 
\begin{align*}
 \Phi(0)[h](x)=-\PV\int_{-\pi}^\pi\frac{h'(x-s)}{t_{[s]}}\, ds=-2\pi H[h'](x),
\end{align*}
where $H$ denotes the periodic Hilbert transform, see e.g. \cite{T04}.
Since $H$ is the Fourier multiplier with symbol $(-i\sign(m))_{m\in\Z},$ it follows that $\Phi(0)=-2\pi(-\p_x^2)^{1/2}$
is the Fourier multiplier with symbol $(-2\pi |m|)_{m\in\Z}.$
The following theorem is a commutator type result which states  that $\Phi(\tau f)$ can be locally approximated by Fourier multipliers  that can be explicitely determined.

\begin{thm}\label{T1} 
Let  $f\in H^s(\s)$, $s\in(3/2,2),$  and     $\mu>0$ be given, and set
\[
s':=\max\Big\{s,\frac{11-2s}{4}\Big\}\in(3/2,2).
\]

Then, there exist $p\geq3$, a finite $p$-partition of unity  $\{\pi_j^p\,:\, 1\leq j\leq 2^{p+1}\} $, a constant $K=K(p)$, and for each  $ j\in\{1,\ldots,2^{p+1}\}$ and $\tau\in[0,1]$ there 
exist   operators $$\bA_{j,\tau}\in\kL(H^2(\s), H^{1}(\s))$$
 such that 
 \begin{equation}\label{DE3}
  \|\pi_j^p\Phi(\tau f)[h]-\bA_{j,\tau}[\pi^p_j h]\|_{H^1(\s)}\leq \mu \|\pi_j^p h\|_{H^2(\s)}+K\|  h\|_{H^{s'}(\s)}
 \end{equation}
 for all $ j\in\{1,\ldots, 2^{p+1}\}$, $\tau\in[0,1],$ and  $h\in H^2(\s)$. 
 The operators $\bA_{j,\tau}$ are defined  by 
  \begin{align} 
 \bA_{j,\tau }:=&a_\tau(x_j^p)\p_x-b_\tau(x_j^p)(-\p_x^2)^{1/2},\label{FM1}
 \end{align}
 where $x_j^p\in I_j^p$ is arbitrary, but fixed. Furthermore,
 \begin{align}\label{at}
  a_\tau:=\phi_4(\tau f)+2\phi_6(\tau f)-\phi_1(\tau f)-\phi_3(\tau f)\qquad\text{and}\qquad b_\tau:=\frac{2\pi}{1+\tau^2f'^2},              
 \end{align}
 where $\phi_1$, $\phi_3,$ and $\phi_4$ are defined in \eqref{ph1}, \eqref{po}, and \eqref{p4}, respectively, and  with 
  \begin{align*}
\phi_6(f)(x) :=&\PV\int_{-\pi}^\pi \frac{1}{s}\frac{1}{1+(\delta_{[x,s]}f/s )^2}\, ds.
\end{align*}
\end{thm}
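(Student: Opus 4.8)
The plan is to freeze coefficients: near any point $x_j^p$ the singular operators $\Phi_0(\tau f)$, $\Phi_1(\tau f)$ and the singular part of $A_1$ behave, to leading order, like constant-coefficient Fourier multipliers whose symbols are obtained by evaluating the (Hölder-continuous) ``coefficient'' functions $\phi_1,\phi_3,\phi_4,\phi_6$ and $b_\tau=2\pi/(1+\tau^2 f'^2)$ at $x_j^p$. I would first use the decomposition $\Phi=\Phi_0-\Phi_1-\Phi_2$ from \eqref{AFf}, further expanded via Lemmas \ref{L:1}--\ref{L:3}. By those lemmas, $\Phi_2(\tau f)$, the operators $\Phi_b(\tau f)$, $A_{2a}(\tau f)$, $A_{2b}(\tau f)$, and multiplication by $\phi_3(\tau f)$, $\phi_4(\tau f)$, $\phi_5(\tau f)$ all map $H^{s'}(\s)$ continuously into ${\rm C}^1(\s)\hookrightarrow H^1(\s)$ (uniformly in $\tau\in[0,1]$, since $\tau\mapsto\Phi(\tau f)$ is continuous on a compact interval), so after applying $\pi_j^p$ they contribute only to the $K\|h\|_{H^{s'}(\s)}$ term. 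What remains are two genuinely singular pieces: $h'\mapsto h'\cdot(\text{coeff})$-type terms coming from $\Phi_{1}$ and $\Phi_a$ (which produce the $a_\tau(x_j^p)\p_x$ part after freezing the multiplier $h'$ against the point value of the coefficient), and the truly non-local pieces $2\Phi_c(\tau f)$ and $A_1(\tau f)$ whose leading symbol is $-b_\tau\,(-\p_x^2)^{1/2}$.

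The key technical step is the localization estimate for the singular integral. Fix $j$ and write $\pi_j^p\Phi_c(\tau f)[h]$; on the support of $\pi_j^p$ the variable $x$ ranges over an interval of length $\sim\pi/2^p$, and for the principal-value integral $\PV\int \frac{\delta_{[x,s]}h'}{s}\,\frac{1}{1+(\delta_{[x,s]}f/s)^2}\,ds$ I would insert the cutoff $\chi_j^p$ chosen in \eqref{JJJ0}--\eqref{JJJ1}: replacing $h$ by $\chi_j^p h$ changes the value only by a ``lower order'' term (the integrand is then non-singular, bounded in $H^{s'}$), and on $\supp\pi_j^p\times\{|s|\le\pi/2^{p-1}\}$ one replaces $\frac{1}{1+(\delta_{[x,s]}f/s)^2}$ by its value at $x=x_j^p$, i.e.\ by $(1+\tau^2 f'(x_j^p)^2)^{-1}$, up to an error governed by the modulus of continuity of $f'$ (which is only Hölder, hence the need for $p$ large and for the parameter $\mu$: choosing $p=p(\mu,f)$ large makes the oscillation of $f'$ on $I_j^p$ less than $\mu$, and Remark~\ref{R:1} lets us control the resulting error by $\mu\|\pi_j^p h\|_{H^2}$). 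The remaining constant-coefficient principal value is exactly $-(-\p_x^2)^{1/2}$ acting on $\pi_j^p h$ up to a commutator $[\pi_j^p,(-\p_x^2)^{1/2}]h$, which is smoothing of order one and hence absorbed into $K\|h\|_{H^{s'}}$. The same scheme applied to $A_1(\tau f)$ (via Lemma \ref{L:A1}, exactly as in \cite[Lemma 3.5]{M16x}) produces the matching $(-\p_x^2)^{1/2}$ contribution and a Hilbert-transform term; one uses that $\Phi_0(0)=-2\pi(-\p_x^2)^{1/2}$ to identify the clean symbol.

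The main obstacle I anticipate is bookkeeping the two places where a nontrivial \emph{commutator} must be shown to be of order $s'-2<0$ relative to the top order: first, commuting the localization function $\pi_j^p$ (or $\chi_j^p$) past the singular integral operator, and second, commuting it past $(-\p_x^2)^{1/2}$; both are handled by a kernel estimate exploiting $|x-(x-s)|$-smallness against the $1/s$ singularity, but the integrability near $s=0$ is delicate precisely because $f\in H^s$ with $s<2$ gives $f'$ only Hölder, not Lipschitz, so one must carefully track which factors carry a derivative of $h$ (taken in $H^2$) versus a difference quotient of $f$ (taken in $C^{s-1/2}$). Once these two commutator estimates are in place, one assembles $a_\tau$ as the sum $\phi_4(\tau f)+2\phi_6(\tau f)-\phi_1(\tau f)-\phi_3(\tau f)$ — matching \eqref{at}, the four summands coming respectively from $\Phi_a$, the regular part of $2\Phi_c$ versus $A_1$ (this is the $\phi_6$ term, obtained by reconciling the periodic kernel $1/t_{[s]}$ with the line kernel $2/s$), $\Phi_2$'s $h'\phi_1$ part, and $\Phi_1=\phi_3\,\p_x$ — collects the $b_\tau$ term from the principal-value pieces, sets $\bA_{j,\tau}$ as in \eqref{FM1}, takes the maximum over the finitely many $j$ to fix $K=K(p)$, and uniformity in $\tau$ follows from compactness of $[0,1]$ together with the continuity in $\tau$ of all the $\phi$-coefficients.
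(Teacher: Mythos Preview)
Your overall strategy matches the paper's: decompose $\Phi=\Phi_0-\Phi_1-\Phi_2$, freeze the coefficient functions $\phi_1,\phi_3,\phi_4,\phi_6$ at $x_j^p$ to produce the first-order part $a_\tau(x_j^p)\p_x$, absorb $\Phi_2$, $\Phi_b$, $A_{2b}$, $\phi_5$ etc.\ into the $K\|h\|_{H^{s'}}$ remainder, and extract $b_\tau(x_j^p)(-\p_x^2)^{1/2}$ from the principal-value piece $2\Phi_c$. One minor confusion: $A_1(\tau f)$ is not a separate singular piece alongside $\Phi_c$; it is the representation of $\Phi_c$ on $(-\pi,\pi)$ via the cutoff $\varphi$, cf.\ \eqref{E:RRRF}.

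The real gap is in your ``replace the kernel $\tfrac{1}{1+(\delta_{[x,s]}f/s)^2}$ by its frozen value, error governed by the modulus of continuity of $f'$'' step. Pointwise smallness of the kernel error is not enough: you still have a singular integral against $(\pi_j^p h)''/s$, and you need an $L_2\to L_2$ operator-norm bound with a small constant. The paper does this via two devices you do not mention. First, an algebraic identity: the difference $B_{0,1}(\tau f)[g]-B_{0,1}(\tau f'(x_j^p){\rm id}_\R)[g]$ is rewritten as $-\tau^2 B_{2,2}(\tau f,\tau f'(x_j^p){\rm id}_\R)[f-f'(x_j^p){\rm id}_\R,\,f+f'(x_j^p){\rm id}_\R,\,g]$, so that Lemma~\ref{L:A1}\,$(i)$ applies and the operator norm is controlled by $\|(f-f'(x_j^p){\rm id}_\R)'\|_\infty$. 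Second, since $f$ is periodic this global Lipschitz norm is \emph{not} small; the paper introduces an auxiliary function $F_j\in W^1_\infty(\R)$ equal to $f$ on the relevant interval $[a_j^p,b_j^p]$ and with $F_j'=f'(x_j^p)$ outside, so that $\|F_j'-f'(x_j^p)\|_\infty=\|f'-f'(x_j^p)\|_{L_\infty((a_j^p,b_j^p))}$ is small for $p$ large, and one checks that replacing $f$ by $F_j$ in the restricted integral is harmless. Without these two ingredients your plan does not close. (The specific value of $s'$, incidentally, comes from applying Lemma~\ref{L:A1}\,$(iii)$ with $r=s$ and $\tau=7/4-s/2$ to the second integral in \eqref{FO}.)
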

\begin{proof} 
 Let   $\{\pi_j^p\,:\, 1\leq j\leq 2^{p+1}\}$, with $p\geq 3$ to be fixed later on, be a $p$-partition of unity 
 and  let $\{\chi_j^p\,:\, 1\leq j\leq 2^{p+1}\} $ be a family associated to this $p$-partition of unity which satisfies \eqref{JJJ0}-\eqref{JJJ1}.
 In the following we let $C$  denote constants which are
independent of $p\in\N$, $h\in H^2(\s)$, $\tau\in [0,1]$, and $j \in \{1, \ldots, 2^{p+1}\}$, and the constants  denoted by $K$ may depend only upon $p.$\medskip

\noindent{\em Step 1: Some ``lower order'' terms.\,\,} In view of \eqref{AFf}, it follows directly from \eqref{PHI0}-\eqref{PHI2} and Lemma \ref{L:2}, that 
\begin{align*}
 \|\Phi(\tau f)[h]\|_{\infty}\leq C\|h\|_{H^{s}(\s)},
\end{align*}
and therewith
\begin{align*} 
 \|\pi_j^p\Phi(\tau f)[h]\|_{L_2(\s)}\leq C\|h\|_{H^{s}(\s)}.
\end{align*}
Moreover, combining \eqref{r0}, \eqref{MP2}, \eqref{p4a}, and \eqref{UET1}, 
it holds
\[
\sup_{\tau\in[0,1] }\|a_\tau\|_\infty\leq C,
\]
and therewith we get
\begin{equation*}
  \|\bA_{j,\tau}[\pi^p_j h]\|_{L_2(\s)}\leq  K\|  h\|_{H^{1}(\s)}.
 \end{equation*}
 These  estimates lead us to
\begin{equation}\label{E1}
\begin{aligned}
 & \|\pi_j^p\Phi(\tau f)[h]-\bA_{j,\tau}[\pi^p_j h]\|_{L_2(\s)}\leq \|\Phi(\tau f)[h]\|_{L_2(\s)}+\|\bA_{j,\tau}[\pi^p_j h]\|_{2}\leq K\|  h\|_{H^{s}(\s)},\\[1ex]
 &   \|(\pi_j^p)'\Phi(\tau f)[h]\|_{L_2(\s)}\leq   K\|  h\|_{H^{s}(\s)},
  \end{aligned}
 \end{equation}
and we are   left to estimate the quantity
$\|\pi_j^p(\Phi(\tau f)[h])'-(\bA_{j,\tau}[\pi^p_j h])'\|_{L_2(\s)}$.
To this end we use  the decomposition  $\Phi(f)=\Phi_0(f)-\Phi_1(f)-\Phi_2(f)$ provided in  \eqref{AFf} and we establish suitable estimates for each of these three operators separately. \medskip

\noindent{\em Step 2: The operator $\Phi_1(f).$\,\,}
Letting
\[
\bA_{j,\tau}^1:=\phi_3(\tau f)(x_j^p)\p_x,
\]
 we deduce from Lemma \ref{L:2}   and \eqref{JJJ0}-\eqref{JJJ1} that 
\begin{equation}\label{E2}
 \begin{aligned}
 &\hspace{-1cm}\|\pi_j^p(\Phi_1(\tau f)[h])'-(\bA_{j,\tau}^1[\pi_j^p h])'\|_{L_2(\s)}\\[1ex]
 &\leq \|(\phi_3(\tau f)-\phi_3(\tau f)(x_j^p))(\pi_j^p h)''\|_{L_2(\s)}+K\|h\|_{H^1(\s)}\\[1ex]
 &\leq \|(\phi_3(\tau f)-\phi_3(\tau f)(x_j^p))\chi_j^p\|_{\infty}\|\pi_j^p h\|_{H^2(\s)}+K\|h\|_{H^1(\s)}\\[1ex]
 &\leq \frac{\mu}{3}\|\pi_j^p h\|_{H^2(\s)}+K\|h\|_{H^1(\s)}
\end{aligned}
\end{equation}
if $p$ is chosen sufficiently large.\medskip

\noindent{\em Step 3: The operator $\Phi_2(f).$\,\,} Letting 
\[
\bA_{j,\tau}^2:=\phi_1(\tau f)(x_j^p)\p_x,
\]
 we deduce from  \eqref{r0} and  \eqref{r1}, similarly as  above, that
 \begin{equation}\label{E3}
\begin{aligned}
 &\hspace{-1cm}\|\pi_j^p(\Phi_2(\tau f)[h])'-(\bA_{j,\tau}^2[\pi_j^p h])'\|_{L_2(\s)}\\[1ex]
 &\leq\|\pi_j^p(\phi_1(\tau f)h')'-(\bA_{j,\tau}^2[\pi_j^p h])'\|_{L_2(\s)}+\|\pi_j^p(\phi_2(\tau f)[h])'\|_{L_2(\s)}\\[1ex]
 &\leq \|(\phi_1(\tau f)-\phi_1(\tau f)(x_j^p))\chi_j^p\|_{\infty}\|\pi_j^p h\|_{H^2(\s)}+K\|h\|_{H^s(\s)} \\[1ex]
 &\leq \frac{\mu}{3}\|\pi_j^p h\|_{H^2(\s)}+K\|h\|_{H^s(\s)}
\end{aligned}
\end{equation}
if $p$ is again   sufficiently large.\medskip

\noindent{\em Step 4: The operator $\Phi_0(f).$\,\,} The estimates for this operator  are more involved than for the other two.
To start, we introduce
\begin{align*}
 \bA_{j,\tau}^0 &:= [\phi_4(\tau f)(x_j^p)+2\phi_6(\tau f)(x_j^p)]\p_x +\frac{2}{1+\tau ^2f'^2(x_j^p)}\Phi_c(0)[h],
\end{align*}
  where  $\Phi_c$ is defined in   \eqref{PHIc}. 
 Recalling Lemma \ref{L:3}, it holds that  
\begin{align*}
 (\Phi_0(\tau f)[h])' &=(h'\phi_4(\tau f))'-(\Phi_b(\tau f)[h])'+(2\Phi_c(\tau f)[h])'.
\end{align*}
Together with  \eqref{p4a} and \eqref{p4b} we obtain, similarly as in the previous 2 steps,   for $p$ sufficiently large, that
\begin{equation}\label{ll1}
\begin{aligned}
&\hspace{-1cm} \|\pi_j^p(\Phi_0(\tau f)[h])'-(\bA_{j,\tau}^0[\pi_j^p h])'\|_{L_2(\s)}  \\[1ex]
&\leq   \| \pi_j^p (h'\phi_4(\tau f))' - \phi_4(\tau f)(x_j^p)(\pi_j^p h)''\|_{L_2(\s)}\\[1ex]
&\hspace{0.424cm}+\| (\Phi_b(\tau f)[h])'\|_{L_2(\s)}\\[1ex]
 & \hspace{0.424cm}+2\Big\| \pi_j^p(\Phi_c(\tau f)[h])'-\phi_6(\tau f)(x_j^p)(\pi_j^p h)'' -\frac{1}{1+\tau^2f'^2(x_j^p)} (\Phi_c(0)[\pi_j^ph])'\Big\|_{L_2(\s)} \\[1ex]
 &\leq 2\Big\| \pi_j^p(\Phi_c(\tau f)[h])'-\phi_6(\tau f)(x_j^p)(\pi_j^p h)'' -\frac{1}{1+\tau^2f'^2(x_j^p)} (\Phi_c(0)[\pi_j^ph])'\Big\|_{L_2((-\pi,\pi))}  \\[1ex]
  & \hspace{0.424cm}+ \frac{\mu}{9}\|\pi_j^ph\|_{H^2(\s)}+ K\|h\|_{H^s(\s)}. 
\end{aligned}
\end{equation}

 In order to estimate the remaining term in \eqref{ll1}, we recall from Lemma \ref{L:3} that in $(-\pi,\pi) $  the derivative $(\Phi_c(\tau f)[h])'$ can be represented as follows
 \begin{align*}
  (\Phi_c(\tau f)[h])'&=\PV\int_\R\frac{\delta_{[\cdot, s]}(\varphi h)''}{s}\frac{1}{1+\big(\tau\delta_{[\cdot,s]}f/s\big)^2} \, ds\\[1ex] 
  &\hspace{0.424cm} -2\tau^2\PV\int_{\R}\frac{(\delta_{[\cdot,s]}f/s\big)(\delta_{[\cdot,s]}f'/s\big)(\delta_{[\cdot,s]}(\varphi h)'/s\big)}{\big[1+\big(\tau\delta_{[\cdot,s]}f/s\big)^2\big]^2}\, ds \\[1ex]
  &\hspace{0.424cm}-( h '\phi_5(\tau f))'+(A_{2b}(\tau f)[h])',
 \end{align*}
 where $\phi_5$  is defined in \eqref{p5},  $\varphi\in {\rm C}^\infty_0(\R,[0,1])$ is the function chosen in the proof of Lemma \ref{L:3}, and $(A_{2b}(\tau f)[h])'$ is given  in \eqref{FOO}.
 Using Lemma \ref{L:A1} $(iii)$ (with $r=s$ and $\tau=7/4-s/2$) together with \eqref{LP1} and the relation \eqref{ggb}, we get
 \begin{equation}\label{ll2}
 \begin{aligned}
  &\hspace{-1cm}\Big\|\PV\int_{\R}\frac{(\delta_{[\cdot,s]}f/s\big)(\delta_{[\cdot,s]}f'/s\big)(\delta_{[\cdot,s]}(\varphi h)'/s\big)}{\big[1+\big(\tau\delta_{[\cdot,s]}f/s\big)^2\big]^2}\, ds\Big\|_{L_2((-\pi,\pi))}
  +\|(A_{2b}(\tau f)[h])'\|_{L_2((-\pi,\pi))}\\[1ex]
  &\leq C\|h\|_{H^{s'}(\s)}.
 \end{aligned}
 \end{equation}

 Hence, we infer from    \eqref{r5}, \eqref{ll1}, \eqref{ll2}, and \eqref{UET1} that 
\begin{equation}\label{Chopin}
 \begin{aligned}
\|\pi_j^p(\Phi_0(\tau f)[h])'-(\bA_{j,\tau}^0[\pi_j^p h])'\|_{L_2(\s)}&\leq  \|T_1[h]\|_{L_2(\s)}+2\|T_2[h]\|_{L_2((-\pi,\pi))}\\[1ex]
&\hspace{0.424cm}+ \frac{\mu}{9}\|\pi_j^ph\|_{H^2(\s)}+K\|h\|_{H^{s'}(\s)},
\end{aligned}
\end{equation}
where
\begin{align*}
 T_1[h]&:=(\pi_j^ph)''(\phi_6(\tau f)-\phi_6(\tau f)(x_j^p)), \\[1ex]
 T_2[h]&:=\pi_j^p\PV\int_\R\frac{ (\varphi h)''(\cdot-s)}{s}\frac{1}{1+\big(\tau\delta_{[\cdot,s]}f/s\big)^2} \, ds
 -\frac{1}{1+\tau^2f'^2(x_j^p)} \PV\int_\R\frac{ (\pi_j^p\varphi h)''(\cdot-s)}{s}  \, ds. 
\end{align*}
With regard to $T_1[h]$, it follows from \eqref{UET1} that if $p$ is sufficiently large, then
\begin{align}
 \|T_1[h]\|_{L_2(\s)}\leq \frac{\mu}{9}\|\pi_j^p h\|_{H^2(\s)}.\label{E4}
\end{align}

We now consider the term $T_2[h]$.
Using the notation introduced in Lemma \ref{L:A1} $(i)$, we have
\begin{align*}
\pi_j^p\PV\int_\R\frac{ (\varphi h)''(\cdot-s)}{s}\frac{1}{1+\big(\tau\delta_{[\cdot,s]}f/s\big)^2} \, ds&=B_{0,1}(\tau f)[\pi_j^p(\varphi h)'']\\[1ex]
&\hspace{0.424cm}+ \int_\R (\varphi h)''(\cdot-s) \frac{ \delta_{[\cdot,s]}\pi_j^p/s}{1+\big(\tau\delta_{[\cdot,s]}f/s\big)^2} \, ds,
\end{align*}
and integrating the last term by parts we are led to
\begin{align*}
 T_2[h]&= -B_{1,1}(\tau f)[\pi_j^p,(\varphi h)']-2\tau^2 B_{2,2}(\tau f, \tau f)[f,\pi_j^p, f'(\varphi h)']\\[1ex]
 &\hspace{0,424cm}+2\tau^2 B_{3,2}(\tau f, \tau f)[f,f,\pi_j^p, (\varphi h)']-B_{0,1}(\tau f)[(\pi_j^p)''\varphi h+ (\pi_j^p)'(\varphi h)']\\[1ex]
 &\hspace{0,424cm}+B_{0,1}(\tau f)[(\pi_j^p\varphi h)'']-B_{0,1}(\tau f'(x_j^p){\rm\id}_\R)[(\pi_j^p\varphi h)''].
\end{align*}
  Lemma  \ref{L:A1} $(i)$ together with \eqref{LP1} yields
\begin{equation}\label{E5}
\begin{aligned}
 \|T_2[h]\|_{L_2((-\pi,\pi))}&\leq \|B_{0,1}(\tau f)[(\pi_j^p\varphi h)'']-B_{0,1}(\tau f'(x_j^p){\rm\id}_\R)[(\pi_j^p\varphi h)'']\|_{L_2((-\pi,\pi))} \\[1ex]
 &\hspace{0,424cm}+K\| h\|_{H^1(\s)}. 
\end{aligned}
\end{equation}
Furthermore, an algebraic computation shows that
\begin{align*}
 &\hspace{-1cm}B_{0,1}(\tau f)[(\pi_j^p\varphi h)'']-B_{0,1}(\tau f'(x_j^p){\rm\id}_\R)[(\pi_j^p\varphi h)''] \\[1ex]
 &=-\tau ^2 B_{2,2}(\tau f, \tau f'(x_j^p){\rm\id}_\R)[f- f'(x_j^p){\rm\id}_\R,f+f'(x_j^p){\rm\id}_\R )(\pi_j^p\varphi h)''],
\end{align*}
and, recalling \eqref{JJJ1}, we may write 
\begin{align*}
  B_{0,1}(\tau f)[(\pi_j^p\varphi h)'']-B_{0,1}(\tau f'(x_j^p){\rm\id}_\R)[(\pi_j^p\varphi h)'']=\tau^2(T_{2a}[h]-T_{2b}[h]),
\end{align*}
where
\begin{align*} 
&T_{2a}[h]:=\int_\R \frac{\big[\delta_{[\cdot,s]}(f-f'(x_j^p){\rm id}_{\R})/s\big]\big[\delta_{[\cdot,s]}(f+f'(x_j^p){\rm id}_{\R})/s\big]\big(\delta_{[\cdot,s]}\chi_j^p/s\big)}
  {\big[1+\tau^2f'^2(x_j^p)\big]\big[1+\tau^2\big( \delta_{[\cdot,s]} f/s\big)^2\big]} (\pi_j^p \varphi h)''(\cdot-s) \,ds,\\[1ex]
 &T_{2b}[h]:=\chi_j^p B_{2,2}(\tau f, \tau f'(x_j^p){\rm\id}_\R)[f- f'(x_j^p){\rm\id}_\R,f+f'(x_j^p){\rm\id}_\R ),(\pi_j^p\varphi h)''].
\end{align*}
Using integration by parts  we   get, similarly as in the deduction of \eqref{E5}, that 
\begin{align}
 \|T_{2a}[h]\|_{L_2((-\pi,\pi))}\leq K\| h\|_{H^1(\s)},\label{E6}
\end{align}
and we are  left to estimate $\|T_{2b}[h]\|_{L_2((-\pi,\pi))}.$
Since $\supp \varphi\subset[-4\pi,4\pi]$,  it holds
\[
\|T_{2b}[h]\|_{L_2((-\pi,\pi))}\leq\|T_{2c}[h]\|_{L_2((-\pi,\pi))}+\|T_{2d}[h]\|_{L_2((-\pi,\pi))},
\]
where
\begin{align*}
 T_{2c}[h]&:={\bf 1}_{(-\pi,\pi)}\chi_j^p \PV\int\limits_{|s|<\frac{\pi}{2^p}} \frac{\big[\delta_{[\cdot,s]}(f-f'(x_j^p){\rm id}_{\R})/s\big]\big[\delta_{[\cdot,s]}(f+f'(x_j^p){\rm id}_{\R})/s\big]}
  {\big[1+\tau^2f'^2(x_j^p)\big]\big[1+\tau^2\big( \delta_{[\cdot,s]} f/s\big)^2\big]} \frac{(\pi_j^p \varphi h)''(\cdot-s)}{s} \,ds,\\[1ex]
   T_{2d}[h]&:=\chi_j^p  \int\limits_{\frac{\pi}{2^p}<|s|<5\pi} \frac{\big[\delta_{[\cdot,s]}(f-f'(x_j^p){\rm id}_{\R})/s\big]\big[\delta_{[\cdot,s]}(f+f'(x_j^p){\rm id}_{\R})/s\big]}
  {\big[1+\tau^2f'^2(x_j^p)\big]\big[1+\tau^2\big( \delta_{[\cdot,s]} f/s\big)^2\big]} \frac{(\pi_j^p \varphi h)''(\cdot-s)}{s} \,ds.
\end{align*}
Integration by parts obviously yields
\begin{align}
 \|T_{2d}[h]\|_{L_2((-\pi,\pi))}&\leq K\| h\|_{H^s(\s)}.\label{E7}
\end{align}
To deal with $T_{2c}[h],$ we notice that if $T_{2c}[h](x)\neq0,$ then necessarily 
$$x\in(-\pi,\pi)\cap \Big(\cup_{n\in\Z} \big(2\pi n+ J_j^p\big)\Big).$$
We distinguish two cases.
\begin{itemize}
 \item[$(i)$] For $j\not\in\{2^p, 2^{p}+1, 2^{p}+2\}$, it holds that  $(-\pi,\pi)\cap  \big(2\pi n+ J_j^p\big)\neq\emptyset$ for a single value $n_j^p,$ which is either $-1$ or $0,$ and 
\[
(-\pi,\pi)\cap  \big(2\pi n_j^p+ J_j^p\big)=2\pi n_j^p+ J_j^p=:[a_j^p,b_j^p].
\]
\item[$(ii)$] For $j \in\{2^p, 2^{p}+1, 2^{p}+2\}$, it holds that  $(-\pi,\pi)\cap  \big(2\pi n+ J_j^p\big)\neq\emptyset$ if and only if $n_j^p\in\{-1,0\}$ and 
\[
(-\pi,\pi)\cap   J_j^p =:[b_j^p,\pi), \qquad (-\pi,\pi)\cap  \big(-2\pi+ J_j^p\big)=:(-\pi,a_j^p].
\]
\end{itemize}
The values of $a_j^p,$ $b_j^p$ can be computed explicitly, cf. \eqref{JJJ0}, in both cases.

We first estimate $T_{2c}[h]$ for $j\not\in\{2^p, 2^{p}+1, 2^{p}+2\}$, To this end, we let $F_j\in W^1_\infty(\R)$ be the function defined by
\begin{equation*}
 F_{j}=f \quad   \text{on $[a_j^p,b_j^p]$}, \qquad F_{j}'=f'(x_j^p) \quad   \text{on $\R\setminus [a_j^p,b_j^p]$.} 
\end{equation*}
Then, $\|F_j'\|_{\infty}\leq \|f'\|_{\infty}$, and since $(\supp \pi_j^p)\cap[a_j^p- {\pi}/{2^p}, b_j^p+{\pi}/{2^p}]\subset [a_j^p, b_j^p]$, we conclude that  
\begin{align*}
 T_{2c}[h]  &={\bf 1}_{(-\pi,\pi)}\chi_j^p \PV\int\limits_{|s|<\frac{\pi}{2^p}} \frac{\big[\delta_{[\cdot,s]}(F_j-f'(x_j^p){\rm id}_{\R})/s\big]\big[\delta_{[\cdot,s]}(F_j+f'(x_j^p){\rm id}_{\R})/s\big]}
  {\big[1+\tau^2f'^2(x_j^p)\big]\big[1+\tau^2\big( \delta_{[\cdot,s]} f/s\big)^2\big]} \frac{(\pi_j^p \varphi h)''(\cdot-s)}{s} \,ds\\[1ex]
  &=T_{2e}[h]-T_{2f}[h],
\end{align*}
where
\begin{align*}
 T_{2e}[h]  &:={\bf 1}_{(-\pi,\pi)}\chi_j^p \PV\int_{\R} \frac{\big[\delta_{[\cdot,s]}(F_j-f'(x_j^p){\rm id}_{\R})/s\big]\big[\delta_{[\cdot,s]}(F_j+f'(x_j^p){\rm id}_{\R})/s\big]}
  {\big[1+\tau^2f'^2(x_j^p)\big]\big[1+\tau^2\big( \delta_{[\cdot,s]} f/s\big)^2\big]} \frac{(\pi_j^p \varphi h)''(\cdot-s)}{s} \,ds,\\[1ex]
  T_{2f}[h]  &:={\bf 1}_{(-\pi,\pi)}\chi_j^p \int\limits_{ \frac{\pi}{2^p}<|s|<5\pi} \frac{\big[\delta_{[\cdot,s]}(F_j-f'(x_j^p){\rm id}_{\R})/s\big]\big[\delta_{[\cdot,s]}(F_j+f'(x_j^p){\rm id}_{\R})/s\big]}
  {\big[1+\tau^2f'^2(x_j^p)\big]\big[1+\tau^2\big( \delta_{[\cdot,s]} f/s\big)^2\big]} \frac{(\pi_j^p \varphi h)''(\cdot-s)}{s} \,ds.
\end{align*}
Integrating  the second term by parts, it follows from Lemma \ref{L:A1} $(i)$ that  
\begin{equation}\label{E8}
\begin{aligned}
 \|T_{2b}[h]\|_{L_2((-\pi,\pi))}&\leq\|T_{2e}[h]\|_{L_2((-\pi,\pi))}+\|T_{2f}[h]\|_{L_2((-\pi,\pi))}+ K\| h\|_{H^s(\s)}\\[1ex]
 &\leq C\|F_j'-f'(x_j^p)\|_{\infty}\|\pi_j^p h\|_{H^2(\s)}+ K\| h\|_{H^s(\s)} \\[1ex]
 &\leq C\|f'-f'(x_j^p)\|_{L_\infty((a_j^p,b_j^p))}\|\pi_j^p h\|_{H^2(\s)}+ K\| h\|_{H^s(\s)} \\[1ex]
&\leq \frac{\mu}{18}\|\pi_j^p h\|_{H^2(\s)}+ K\| h\|_{H^s(\s)},
\end{aligned}
\end{equation}
if $p$ is sufficiently large.

Noticing that \eqref{p4b} implies
\begin{align*}
 \|(A_{j,\tau}[\pi_j^ph])'-((A_{j,\tau}^0 -A_{j,\tau}^1 -A_{j,\tau}^2)[\pi_j^p h])' \|_{L_2(\s)}\leq \|\Phi_b(0)[\pi_j^p h]\|_{H^1(\s)}\leq K\|h\|_{H^s(\s)},
\end{align*}
for  $j\not\in\{2^p, 2^{p}+1, 2^{p}+2\}$ the desired conclusion \eqref{DE3} follows from \eqref{E1}-\eqref{E3} and \eqref{Chopin} -\eqref{E8}.

In the second case, when $j\in\{2^p, 2^{p}+1, 2^{p}+2\}$, we decompose 
\[
T_{2c}[h]={\bf 1}_{(-\pi,a_j^p]}T_{2c}[h]+{\bf 1}_{[b_j^p,\pi)}T_{2c}[h],
\]
and we estimate the new two terms   separately,  by using  similar  arguments to those that lead to \eqref{E8}. 
For example, when considering ${\bf 1}_{[b_j^p,\pi)}T_{2c}[h]$, the function $F_j\in W^1_\infty(\R)$ is  defined by
\begin{equation*}
 F_{j}=f \quad   \text{on $[b_j^p,\pi+\pi/2^p]$}, \qquad F_{j}'=f'(x_j^p) \quad   \text{on $\R\setminus [b_j^p,\pi+\pi/2^p]$.} 
\end{equation*} 
The other details are analogous  to the ones used above, and therefore we omit them.
This completes our argument.
\end{proof}

We are now in the position to prove the desired estimates \eqref{13K} and \eqref{14K}, and therewith the desired generator property.
\begin{thm}\label{T:1} Given $f\in H^s(\s)$, $s\in(3/2,2)$ it holds that
\begin{align*} 
  -\Phi(f)\in \kH(H^2(\s), H^1(\s)).
 \end{align*}
\end{thm}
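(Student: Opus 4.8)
The plan is to establish the two resolvent estimates \eqref{13K} and \eqref{14K}; by the standard criterion recalled after \eqref{14K} (see \cite{Am95}), these imply $-\Phi(f)\in\kH(H^2(\s),H^1(\s))$. The heart of the matter is \eqref{14K}, and the strategy is a localization argument fueled by Theorem~\ref{T1}. First I would fix $\mu$ small (to be specified by the constants below), invoke Theorem~\ref{T1} to obtain $p$, the $p$-partition of unity $\{\pi_j^p\}$, the constant $K=K(p)$, and the model operators $\bA_{j,1}=a_1(x_j^p)\p_x-b_1(x_j^p)(-\p_x^2)^{1/2}$ (the case $\tau=1$). Since the Rayleigh–Taylor condition \eqref{RTC} holds, $b_1(x_j^p)=2\pi/(1+f'^2(x_j^p))>0$, so each $\bA_{j,1}$ is a Fourier multiplier whose symbol $m\mapsto i a_1(x_j^p)m - 2\pi|m|/(1+f'^2(x_j^p))$ has real part bounded above by $-c|m|$ with $c>0$ uniform in $j$; consequently there are $\omega_0>0$ and $\kappa_0\geq1$, \emph{independent of $j$}, with $\kappa_0\|(\lambda-\bA_{j,1})[w]\|_{H^1(\s)}\geq |\lambda|\,\|w\|_{H^1(\s)}+\|w\|_{H^2(\s)}$ for all $\re\lambda\geq\omega_0$ and $w\in H^2(\s)$. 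This is the explicit Fourier-multiplier computation that is the reason for isolating the principal symbol.

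Next I would patch these local estimates together. For $h\in H^2(\s)$ and $\re\lambda\geq\omega_0$, apply the model estimate to $w=\pi_j^p h$ and then replace $\bA_{j,1}[\pi_j^p h]$ by $\pi_j^p\Phi(f)[h]$ using \eqref{DE3}:
\begin{align*}
|\lambda|\,\|\pi_j^p h\|_{H^1(\s)}+\|\pi_j^p h\|_{H^2(\s)}
&\leq \kappa_0\|(\lambda-\bA_{j,1})[\pi_j^p h]\|_{H^1(\s)}\\
&\leq \kappa_0\|\pi_j^p(\lambda-\Phi(f))[h]\|_{H^1(\s)}+\kappa_0\|\pi_j^p\Phi(f)[h]-\bA_{j,1}[\pi_j^p h]\|_{H^1(\s)}\\
&\leq \kappa_0\|\pi_j^p(\lambda-\Phi(f))[h]\|_{H^1(\s)}+\kappa_0\mu\|\pi_j^p h\|_{H^2(\s)}+\kappa_0 K\|h\|_{H^{s'}(\s)},
\end{align*}
where one uses that $\|\pi_j^p g\|_{H^1}\le C\|g\|_{H^1}$ uniformly in $j$. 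Choosing $\mu$ so that $\kappa_0\mu\le 1/2$, the term $\kappa_0\mu\|\pi_j^p h\|_{H^2(\s)}$ is absorbed into the left-hand side. Taking the maximum over $j$ and invoking Remark~\ref{R:1} (equivalence of $\max_j\|\pi_j^p\cdot\|_{H^k}$ with $\|\cdot\|_{H^k}$, $k=1,2$) converts the localized bound into
\[
|\lambda|\,\|h\|_{H^1(\s)}+\|h\|_{H^2(\s)}\leq C\|(\lambda-\Phi(f))[h]\|_{H^1(\s)}+C\|h\|_{H^{s'}(\s)}.
\]
Finally, since $s'<2$, interpolation (Young's inequality applied to $\|h\|_{H^{s'}}\le \e\|h\|_{H^2}+C_\e\|h\|_{H^1}$) and then the Sobolev interpolation $\|h\|_{H^1}\le\e\|h\|_{H^2}+C_\e\|h\|_{L_2}\le\e\|h\|_{H^2}+(C_\e/|\lambda|)\cdot|\lambda|\|h\|_{H^1}$ let one absorb the lower-order terms at the expense of enlarging $\omega$; this yields \eqref{14K} for all $\re\lambda\ge\omega$ with a new constant $\kappa$.

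It remains to prove \eqref{13K}, i.e. that $\omega-\Phi(f)$ is an isomorphism $H^2(\s)\to H^1(\s)$ for $\omega$ large. Injectivity and the a priori bound $\|h\|_{H^2}\le\kappa\|(\omega-\Phi(f))[h]\|_{H^1}$ are immediate from \eqref{14K}. For surjectivity I would use the method of continuity along the path $[\tau\mapsto\Phi(\tau f)]$ furnished by Theorem~\ref{T1}: the estimate \eqref{DE3} holds \emph{uniformly in} $\tau\in[0,1]$, so repeating the localization argument above gives a uniform bound $\|h\|_{H^2(\s)}\le\kappa\|(\omega-\Phi(\tau f))[h]\|_{H^1(\s)}$ for all $\tau\in[0,1]$ (with $\omega,\kappa$ independent of $\tau$). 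Since $\Phi(0)=-2\pi(-\p_x^2)^{1/2}$ is manifestly an isomorphism $H^2(\s)\to H^1(\s)$ after shifting by $\omega>0$, the method of continuity (the set of $\tau$ for which $\omega-\Phi(\tau f)\in\mathrm{Isom}$ is open by a Neumann-series argument and closed by the uniform estimate, hence equals $[0,1]$) yields \eqref{13K} at $\tau=1$. The main obstacle is not any single step but the uniformity bookkeeping: one must check that $\kappa_0$ from the model-operator estimate and all the absorption constants are genuinely independent of $j$ and of $\tau$, which relies on the uniform bounds $\sup_\tau\|a_\tau\|_\infty\le C$, $\inf_\tau\inf_x b_\tau(x)\ge c>0$ already recorded in Theorem~\ref{T1} and its proof, together with the uniform-in-$j$ operator norms of multiplication by $\pi_j^p$ on $H^1(\s)$.
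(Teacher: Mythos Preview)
Your proof is correct and follows essentially the same route as the paper's: localize via Theorem~\ref{T1}, use the uniform Fourier-multiplier resolvent bound for the model operators $\bA_{a,b}$, absorb the commutator and lower-order terms by Remark~\ref{R:1} and interpolation, and close with the method of continuity along $\tau\mapsto\Phi(\tau f)$ starting from $\Phi(0)=-2\pi(-\p_x^2)^{1/2}$. Two minor remarks: the positivity $b_\tau>0$ is automatic from $b_\tau=2\pi/(1+\tau^2 f'^2)$ and does not invoke the Rayleigh--Taylor condition, and the apparent circularity in choosing $\mu$ before knowing $\kappa_0$ is most cleanly avoided by first fixing $\kappa_0$ for the whole family $\{\bA_{a,b}:|a|\le\vartheta,\ \vartheta^{-1}\le b\le\vartheta\}$ (which depends only on $\|f'\|_\infty$) and then setting $\mu=1/(2\kappa_0)$---precisely the uniformity you identify in your final paragraph.
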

\begin{proof}
 Letting $a_\tau $ and $b_\tau $ denote the functions defined in \eqref{at}, it follows from \eqref{r0}, \eqref{MP2}, \eqref{p4a}, and \eqref{UET1} that there exists a constant 
 $\vartheta\geq 2\pi$ such that
 \begin{align*}
  \sup_{\tau\in[0,1]}\|a_\tau\|_{\infty}\leq \vartheta\qquad\text{and}\qquad \frac{1}{\vartheta}\leq \min_{\tau\in[0,1]} b_\tau\leq \max_{\tau\in[0,1]}b_\tau\leq \vartheta.
 \end{align*}
Given $\tau\in[0,1]$, $3\leq p\in\N$, a finite $p$-partition of unity  $\{\pi_j^p\,:\, 1\leq j\leq 2^{p+1}\} $, and $j\in\{1,\ldots, 2^{p+1}\}$,  
the operators $A_{j,\tau}$ defined in \eqref{FM1} are elements of the set $\{\bA_{a,b}\,:\, |a|\leq \vartheta, \vartheta^{-1}\leq b\leq \vartheta\},$
where $\bA_{a,b}$ denotes the Fourier multiplier
\[
\bA_{a,b}:=a\p_x-b(-\p_x^2)^{1/2}.
\]
A Fourier series expansion argument shows that there exists a constant $ \kappa_0\geq1$ such that
 \begin{align}\label{13Ka}
&\lambda- \bA_{a,b}\in{\rm Isom}(H^2(\s), H^1(\s)),\\[1ex]
\label{14Ka}
& \kappa_0\|(\lambda-\bA_{a,b})[h]\|_{H^1(\s)}\geq  |\lambda|\cdot\|h\|_{H^1(\s)}+\|h\|_{H^2(\s)} 
\end{align}
for all $|a|\leq \vartheta,$ $\vartheta^{-1}\leq b\leq \vartheta$, $\re\lambda\geq1$, and $h\in H^2(\s)$.

We now chose $\mu=1/2\kappa_0 $ and infer from Theorem \ref{T1} that there exist  $p\geq3$, a finite $p$-partition of unity  $\{\pi_j^p\,:\, 1\leq j\leq 2^{p+1}\} $, a constant $K=K(p)$,
and for each  $ p\in\{1,\ldots,2^{p+1}\}$ and $\tau\in[0,1]$ there 
exist   operators $\bA_{j,\tau}\in\kL(H^2(\s), H^{1}(\s))$ such that 
 \begin{equation*} 
  \|\pi_j^p\Phi(\tau f)[h]-\bA_{j,\tau}[\pi^p_j h]\|_{H^1(\s)}\leq \frac{1}{2\kappa_0} \|\pi_j^p h\|_{H^2(\s)}+K\|  h\|_{H^{s'}(\s)}
 \end{equation*}
 for all $ j\in\{1,\ldots, 2^{p+1}\}$, $\tau\in[0,1],$ and  $h\in H^2(\s)$. 
The latter estimate together with  \eqref{14Ka} yields
\begin{align*}
   \kappa_0\|\pi_j^p(\lambda-\Phi(\tau f))[h]\|_{H^1(\s)}&\geq \kappa_0\|(\lambda-\bA_{j,\tau})[\pi^p_jh]\|_{H^1(\s)}-\kappa_0\|\pi_j^p\Phi(\tau f)[h]-\bA_{j,\tau}[\pi^p_j h]\|_{H^1(\s)}\\[1ex]
   &\geq |\lambda|\cdot\|\pi^p_jh\|_{H^1(\s)}+ \frac{1}{2}\|\pi^p_j h\|_{H^2(\s)}-\kappa_0K\|  h\|_{H^{s'}(\s)}.
 \end{align*}
The Remark \ref{R:1} together with Young's inequality and \eqref{IP} ensures now the existence of constants $\kappa\geq1$ and $\omega\geq 1$ such that 
\begin{align*}
& \kappa\|(\lambda-\Phi(\tau f))[h]\|_{H^1(\s)}\geq  |\lambda|\cdot\|h\|_{H^1(\s)}+\|h\|_{H^2(\s)}
\end{align*}
for all $\tau\in[0,1]$, $\lambda\in\C$ with $\re \lambda\geq \omega$,  and $h\in H^2(\s)$. Choosing $\tau=1$, we have established  \eqref{14K}.
Moreover, for the particular choice $\lambda=\omega$ in the latter inequality, the method of continuity, see e.g. \cite[Proposition 1.1.1]{Am95}, together with \eqref{13Ka} implies, in view of $\Phi(0)=\bA_{0,2\pi},$ that indeed
$\omega-\Phi(f)\in{\rm Isom}(H^2(\s), H^1(\s))$. This completes the proof.
\end{proof}

We now come to the proof of our first main result.

\begin{proof}[Proof of Theorem \ref{MT1}] Let $s\in(3/2,2)$ be fixed and set $\E_1:=H^2(\s)$, $\E_0:=H^1(\s)$, $ \alpha:=s-1,$ and $\beta:=\ov s-1$, where we fix  some $\ov s\in (3/2,2)$.
 In view of the  well-known interpolation property
\begin{align}\label{IP}
[H^{s_0}(\X),H^{s_1}(\X)]_\theta=H^{(1-\theta)s_0+\theta s_1}(\X),\qquad\theta\in(0,1),\, 0\leq s_0\leq s_1<\infty,\, \X\in\{\s,\R\},
\end{align}
 it holds
\[
\E_\alpha=[\E_0,\E_1]_\alpha=H^{ s}(\s) \qquad\text{and}\qquad \E_\beta=[\E_0,\E_1]_\beta=H^{\ov s}(\s).
\]
The relation \eqref{Gom} together with  Theorem \ref{T:1} ensures that the assumptions of Theorem \ref{T:A} are fulfilled by the Muskat problem \eqref{AF}, and we conclude that \eqref{AF} has, for each $f\in H^s(\s)$,
a maximal classical solution  $f:=f(\cdot; f_0)$, with
\begin{align*}
  &f\in C([0,T_+(f_0)),H^s(\s))\cap C((0,T_+(f_0)), H^2(\s))\cap C^1((0,T_+(f_0)), H^1(\s)), \quad\text{ $T_+(f_0)\leq\infty$,}\\[1ex] 
  &f\in  C^{s-\ov s}([0,T], H^{\ov s}(\s))\quad\text{for all  $T\in(0,T_+(f_0))$.}
\end{align*}

Concerning the uniqueness claim in Theorem \ref{MT1}, it suffices to prove that any  function
\begin{equation*} 
  f\in C([0,T],H^s(\s))\cap C((0,T], H^2(\s))\cap C^1((0,T]), H^1(\s)),\qquad T>0,
 \end{equation*}  
 that solves \eqref{AF} pointwise,  satisfies 
  \begin{equation} \label{T:EEE}
   f\in C^\eta([0,T],H^{\ov s}(\s)),
 \end{equation}
where $\eta:=(s-\ov s)/s\in(0,s-\ov s).$
To this end, we note that   the assumption  $f\in C([0,T],H^s(\s))$ together with \eqref{PHI0}-\eqref{PHI1} and Lemma \ref{L:2} yields that  $\p_tf\in {\rm BC}((0,T], L_2(\s)),$
and the property \eqref{T:EEE}, and implicitly the claim $(i)$ of the theorem,  follows from
\begin{align*}
  \| f(t)-f(s)\|_{H^{\ov s}(\s)}\leq  \|f(t)- f(s)\|_{L_2(\s)}^{1-\ov s/s}\| f(t)- f(s)\|_{H^s(\s)}^{\ov s/s}\leq C|t-s|^{\eta},\qquad t,s\in[0,T].
 \end{align*}
 
 The criterion for global existence stated at $(ii)$ follows directly from Theorem \ref{T:A}.
Finally, the real-analyticity property   $(iii)$  is obtained by using \eqref{Gom} and a parameter trick which appears, in other forms, also in  \cite{An90, ES96, PSS15}.
Since the proof of this claim is almost identical to that in the nonperiodic case \cite[Theorem 1.3]{M16x} we omit here the details. 
\end{proof}

 %%%%%%%%%%%%%%%%%%%%%%%%%%%%%%%%%%%%%%%%%%%%%%%%%%%%%%%%%%%%%%%%%%%
%%%%%%%%%%%%%%%%%%%%%%%%%%%%%%%%%%%%%%%%%%%%%%%%%%%%%%%%%%%%%%%%%%%%
%%%%%%%%%%%%%%%%%%%%%%%%%%%%%%%%%%%%%%%%%%%%%%%%%%%%%%%%%%%%%%%%%%%%
%%%%%%%%%%%%%%%%%%%%%%%%%%%%%%%%%%%%%%%%%%%%%%%%%%%%%%%%%%%%%%%%%%%
%%%%%%%%%%%%%%%%%%%%%%%%%%%%%%%%%%%%%%%%%%%%%%%%%%%%%%%%%%%%%%%%%%%%
%%%%%%%%%%%%%%%%%%%%%%%%%%%%%%%%%%%%%%%%%%%%%%%%%%%%%%%%%%%%%%%%%%%%
\section{Stability analysis}\label{S5}
 %%%%%%%%%%%%%%%%%%%%%%%%%%%%%%%%%%%%%%%%%%%%%%%%%%%%%%%%%%%%%%%%%%%
%%%%%%%%%%%%%%%%%%%%%%%%%%%%%%%%%%%%%%%%%%%%%%%%%%%%%%%%%%%%%%%%%%%%
%%%%%%%%%%%%%%%%%%%%%%%%%%%%%%%%%%%%%%%%%%%%%%%%%%%%%%%%%%%%%%%%%%%%
%%%%%%%%%%%%%%%%%%%%%%%%%%%%%%%%%%%%%%%%%%%%%%%%%%%%%%%%%%%%%%%%%%%
%%%%%%%%%%%%%%%%%%%%%%%%%%%%%%%%%%%%%%%%%%%%%%%%%%%%%%%%%%%%%%%%%%%%
%%%%%%%%%%%%%%%%%%%%%%%%%%%%%%%%%%%%%%%%%%%%%%%%%%%%%%%%%%%%%%%%%%%%

We now study the stability properties  of the stationary solution $f\equiv0$ to the evolution problem  \eqref{P}.
To this end we first note that for each constant $c\in\R$, the function $f\equiv c$  defines  a stationary solution to \eqref{P}.
Therefore, it is suitable to introduce the following Hilbert spaces
\[
H^r_0(\s):=\{f\in H^r(\s)\,:\, \langle f\rangle=0\}, \qquad r\geq0,
\]
where 
\[
 \langle f\rangle:=\frac{1}{2\pi}\int_{-\pi}^\pi f(x)\, dx
\]
 denotes the integral mean of $f$ over one period.
 Since the value of the  quotient $k\Delta_\rho/4\pi\mu$  influences the rate of convergence  of the solutions that start in $H^2_0(\s)$ and are initially small towards the zero solution, see Theorem \ref{MT2}, 
 we do not scale out this constant from the problem, as we did when proving Theorem \ref{MT1}.
\begin{lemma}\label{L:4}
Given $f\in H^2_0(\s)$, let 
\begin{align}\label{Psi}
 \Psi(f):=\frac{k\Delta_\rho}{4\pi\mu}\Phi(f)[f].
\end{align}
It then holds
\begin{align}\label{RPsi}
 \Psi\in {\rm C}^\infty(H^2_0(\s), H^1_0(\s)).
\end{align}
\end{lemma}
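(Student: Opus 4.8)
The plan is to decouple the two claims contained in \eqref{RPsi}: first, that $\Psi$ is a smooth map into $H^1(\s)$; and second, that its range actually lies in the closed subspace $H^1_0(\s)$.

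For the smoothness I would argue as follows. Pick any $s\in(3/2,2)$. The inclusion $H^2(\s)\hookrightarrow H^s(\s)$ is linear and bounded, hence real-analytic, and the evaluation map $\kL(H^2(\s),H^1(\s))\times H^2(\s)\to H^1(\s)$, $(A,h)\mapsto A[h]$, is bounded bilinear, hence real-analytic as well. Composing these with the real-analyticity property \eqref{Gom} of $\Phi$ shows that $[f\mapsto\Phi(f)[f]]:H^2(\s)\to H^1(\s)$ is real-analytic; restricting to the closed subspace $H^2_0(\s)$ of $H^2(\s)$ then gives $\Psi\in {\rm C}^\infty(H^2_0(\s),H^1(\s))$. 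Since $H^1_0(\s)$ is closed in $H^1(\s)$ and carries the induced norm, \eqref{RPsi} will follow once we know that $\langle\Phi(f)[f]\rangle=0$ for every $f\in H^2(\s)$.

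To establish this mean-zero identity I would use that, in view of \eqref{Psi} and of the definition \eqref{P} of the problem, $\Psi(f)$ is precisely the right-hand side of $\eqref{P}_1$, namely
\[
\Psi(f)(x)=-\frac{k\Delta_\rho}{4\pi\mu}\,\PV\int_{-\pi}^{\pi} q(x,s)\, ds,\qquad q(x,s):=f'(x-s)\,\frac{f'(x)\,(T_{[x,s]}f)\,(1+t_{[s]}^2)+t_{[s]}\big[1-(T_{[x,s]}f)^2\big]}{t_{[s]}^2+(T_{[x,s]}f)^2}.
\]
For each $\e\in(0,\pi)$ the kernel $q$ is bounded on $\s\times\{\e<|s|<\pi\}$, and the truncated integrals $\int_{\e<|s|<\pi}q(\cdot,s)\,ds$ converge pointwise on $\s$ to $-\Phi(f)[f]$ and are bounded uniformly in $(x,\e)$ (this is exactly the content of the existence of the $\PV$ integrals established in Sections \ref{S2}--\ref{S3}); by dominated convergence it therefore suffices to show that $I_\e:=\int_{-\pi}^{\pi}\int_{\e<|s|<\pi} q(x,s)\, ds\, dx=0$ for every such $\e$. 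The key step is the invariance of the truncated domain under the measure-preserving involution $(x,s)\mapsto(x-s,-s)$ of $\s\times\{\e<|s|<\pi\}$. Using $\delta_{[x-s,-s]}f=-\delta_{[x,s]}f$, $T_{[x-s,-s]}f=-T_{[x,s]}f$ and $t_{[-s]}=-t_{[s]}$, a short algebraic computation gives
\[
q(x,s)+q(x-s,-s)=-\,t_{[s]}\,\frac{1-(T_{[x,s]}f)^2}{t_{[s]}^2+(T_{[x,s]}f)^2}\,\delta_{[x,s]}f'=-\,\partial_x\Big[2\arctan\big((T_{[x,s]}f)/t_{[s]}\big)\Big].
\]
Applying the change of variables to $I_\e$ and averaging yields $2I_\e=\int_{-\pi}^{\pi}\int_{\e<|s|<\pi}\big(q(x,s)+q(x-s,-s)\big)\,ds\,dx$; by Fubini's theorem and the fact that for each fixed $s$ with $0<|s|<\pi$ the function $x\mapsto 2\arctan\big((T_{[x,s]}f)/t_{[s]}\big)$ is $2\pi$-periodic, the inner $x$-integral vanishes. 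Hence $I_\e=0$ for all $\e$, so $\langle\Phi(f)[f]\rangle=0$ and \eqref{RPsi} follows.

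The routine part is the bookkeeping: checking that $q$ is indeed, up to the positive constant $k\Delta_\rho/4\pi\mu$, the integrand in $\eqref{P}_1$, and the elementary differentiation identity for $\arctan$. The one point requiring a little care, and where I expect the main (though mild) difficulty, is the interchange of $\int_{-\pi}^{\pi}dx$ with the principal-value limit; I would justify it exactly as the existence of the $\PV$ integrals is justified earlier in the paper, using the $\tan$/$\tanh$ estimates \eqref{aa} together with the leading-order cancellation $q(x,s)\sim 2f'(x)/s$ as $s\to0$. As an independent check of $\langle\Phi(f)[f]\rangle=0$, one may instead integrate the kinematic relation from Proposition \ref{PE} over one period and apply the divergence theorem in the strip $\{-\pi<x<\pi,\ f(x)<y<d\}$, letting $d\to\infty$ and using the far-field decay of $\wt v$ to annihilate the boundary term at $y=d$.
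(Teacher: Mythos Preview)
Your proposal is correct. The reduction of the smoothness claim to \eqref{Gom} via the bounded bilinear evaluation map is exactly how the paper proceeds, so on that point the two arguments coincide.

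For the mean-zero identity $\langle\Phi(f)[f]\rangle=0$, however, you take a genuinely different route. The paper invokes the equivalence result Proposition~\ref{PE}: it identifies $\Psi(f)$ with the normal trace $\langle v_+|_{[y=f(x)]}|(-f',1)\rangle$ of the velocity field $\wt v$ constructed in \eqref{VF1}, and then applies Stokes' theorem over one period of the upper fluid domain $[y>f(x)]$, using that $\wt v_+$ is divergence-free and decays at infinity (cf.\ \eqref{eq:S3}). Your argument, by contrast, stays entirely at the level of the contour integral: the involution $(x,s)\mapsto(x-s,-s)$ symmetrises the kernel, and the resulting sum is recognised as the $x$-derivative of the $2\pi$-periodic function $2\arctan\big(T_{[x,s]}f/t_{[s]}\big)$, whose integral over $\s$ vanishes. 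This is precisely the ``independent check'' you mention in your final sentence, which is in fact the paper's actual proof.

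What each approach buys: the paper's argument is conceptually transparent (conservation of mass for an incompressible flow) and requires no new computation beyond what is already available from Proposition~\ref{PE}, though it does lean on the construction of $\wt v$ and the far-field decay. Your approach is more self-contained and purely algebraic, avoiding the velocity field altogether; the price is the explicit $\arctan$ identity and the justification of the interchange of the $x$-integral with the principal-value limit, which (as you correctly note) follows from the leading-order cancellation $q(x,s)\sim 2f'(x)/s$ and the H\"older regularity $f'\in{\rm C}^{1/2}(\s)$ available for $f\in H^2(\s)$.
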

\begin{proof}
 Having established that $\langle \Psi(f)\rangle=0$ for all $f\in H^2_0(\s)$, the desired claim \eqref{RPsi} follows from \eqref{Gom}. 
 Given $f\in H^2_0(\s)$, let $$v_+=\wt v\big|_{[y>f(x)]}\in {\rm C}([y\geq f(x)])\cap {\rm C}^1([y>f(x)]),$$
 where $\wt v$ is introduced in \eqref{VF1}  for $\ov\omega$ defined in \eqref{ovom}.
 The proof of Proposition \ref{PE} shows that 
 \[\Psi(f)=\langle v_+|_{[y=f(x)]}|(-f',1)\rangle.\]
 Using Stokes's theorem, we find in view of  $\eqref{eq:S1}_1$ and of \eqref{eq:S3} that
 \begin{align*}
  \int_{-\pi}^\pi \langle v_+|_{[y=f(x)]}|(-f',1)\rangle\, dx=&\int\limits_{[y=f(x)]} \langle v_+|\nu\rangle\, d\sigma=-\int\limits_{[y>f(x)]} {\rm div}\,  v_+\, d(x,y)=0,
 \end{align*}
where we integrate over one period of the curve $[y=f(x)]$, respectively over one period of the periodic domain $[y>f(x)]$.
This proves the claim.
\end{proof}

Lemma \ref{L:4} shows in particular that the integral mean of the solutions to \eqref{P} found in Theorem \ref{MT1} is given by the integral mean of the initial data, 
and this aspect should be taken into account when studying the asymptotic stability of the zero solution.
To do so, we observe that for initial data with zero integral mean,  \eqref{P}   can be  recast as
\begin{align}\label{GP}
 \p_tf = \Psi(f),\quad t>0,\qquad f(0)=f_0,
\end{align}
 and our task reduces to studying the asymptotic stability of zero solution to \eqref{GP}.
 In this way we have eliminated the zero eigenvalue from the spectrum of  the Fr\'echet derivative $\p\Psi{0},$ as the next result shows.

\begin{lemma}\label{L:S}
 The Fr\'echet derivative $\p\Psi(0)\in\kL(H^2_0(\s), H^1_0(\s))$ is given by
 \[
 \p\Psi(0)[f]=\frac{k\Delta_\rho}{4\pi\mu}\Phi(0)[f]  \qquad\text{for $f\in H^2_0(\s)$,}
 \]
 and 
 \begin{align}\label{Spe}
  \sigma(\p\Psi(0))=\Big\{-\frac{k\Delta_\rho m}{2\mu}\,:\, 1\leq m\in\N \Big\}.
 \end{align}
\end{lemma}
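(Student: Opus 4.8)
The plan is to compute the Fr\'echet derivative $\partial\Psi(0)$ by the product rule and then to read off its spectrum from the corresponding Fourier symbol, exploiting the fact that on $H^r_0(\s)$ only the nonzero Fourier modes survive. First I would observe that, by \eqref{Psi}, $\Psi$ is the composition of the smooth map $[f\mapsto(\Phi(f),f)]$ --- smoothness of $\Phi$ on $H^2_0(\s)\hookrightarrow H^s(\s)$ being \eqref{Gom} --- with the continuous bilinear evaluation $[(A,g)\mapsto A[g]]$, up to the multiplicative constant $k\Delta_\rho/4\pi\mu$. The chain rule then gives, for $f\in H^2_0(\s)$,
\[
\partial\Psi(0)[f]=\frac{k\Delta_\rho}{4\pi\mu}\big((\partial\Phi(0)[f])[0]+\Phi(0)[f]\big)=\frac{k\Delta_\rho}{4\pi\mu}\Phi(0)[f],
\]
the first summand vanishing simply because $\partial\Phi(0)[f]$ is a bounded operator applied to the zero function; by Lemma \ref{L:4} the right-hand side lies in $H^1_0(\s)$, so that indeed $\partial\Psi(0)\in\kL(H^2_0(\s),H^1_0(\s))$ with the stated formula.

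Next I would invoke the identity $\Phi(0)[h]=-2\pi H[h']=-2\pi(-\p_x^2)^{1/2}h$ established in Section \ref{S4}, which exhibits $\partial\Psi(0)$ as the Fourier multiplier with symbol $\big(-\tfrac{k\Delta_\rho}{2\mu}|m|\big)_{m\in\Z}$. Since every function in $H^r_0(\s)$ has vanishing zeroth Fourier coefficient, I would regard $\partial\Psi(0)$ as a closed operator in $H^1_0(\s)$ with domain $H^2_0(\s)$, acting on $e^{imx}$, $m\neq0$, by multiplication with $-\tfrac{k\Delta_\rho}{2\mu}|m|$. For $\lambda\in\C$ the operator $\lambda-\partial\Psi(0)$ is then the Fourier multiplier with symbol $\mu_m(\lambda):=\lambda+\tfrac{k\Delta_\rho}{2\mu}|m|$, $m\in\Z\setminus\{0\}$, and a routine Fourier-series argument characterizes its membership in ${\rm Isom}(H^2_0(\s),H^1_0(\s))$ by the condition $\inf_{m\neq0}|\mu_m(\lambda)|(1+m^2)^{-1/2}>0$ (the corresponding supremum being automatically finite). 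Because $|\mu_m(\lambda)|\geq\tfrac{k\Delta_\rho}{2\mu}|m|-|\lambda|\to\infty$ as $|m|\to\infty$, this infimum is positive unless $\mu_m(\lambda)=0$ for some $m\neq0$; in the latter case $e^{\pm imx}\in\ker(\lambda-\partial\Psi(0))$, so $\lambda$ is an eigenvalue. This yields precisely \eqref{Spe}.

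The argument is essentially routine; the only points requiring a little care are the bookkeeping in the product rule --- in particular making sure that the ``derivative of the operator part'' term drops out at $f=0$ --- and fixing the functional-analytic framework in which the spectrum is taken, namely $\partial\Psi(0)$ viewed as an unbounded operator in $H^1_0(\s)$, together with the sharp isomorphism criterion for Fourier multipliers acting between $H^2_0(\s)$ and $H^1_0(\s)$.
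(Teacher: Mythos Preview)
Your proposal is correct and follows essentially the same approach as the paper: the paper's proof simply cites \eqref{Gom} and \eqref{Psi} for the formula for $\partial\Psi(0)$ and the identity $\Phi(0)=-2\pi(-\p_x^2)^{1/2}$ for the spectrum \eqref{Spe}. You have merely spelled out in more detail the product-rule computation and the Fourier-multiplier argument that the paper leaves implicit.
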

\begin{proof}
 The first claim is a direct consequence of \eqref{Gom} and \eqref{Psi}. The relation \eqref{Spe} follows from the previous observation that $\Phi(0)=-2\pi(-\p_x^2)^{1/2}.$
\end{proof}

As a straight forward consequence of Lemma \ref{L:S} we obtain
\begin{align}\label{GPF}
 -\p\Psi(0)\in\kH(H^2_0(\s), H^1_0(\s)).
\end{align}

 The relation \eqref{GPF} together with the Lemmas \ref{L:4} and \ref{L:S}  are the ingredients  needed in order to  apply the principle of linearized stability \cite[Theorem 9.1.2]{L95} in the context of \eqref{GP}.
 
 \begin{proof}[Proof of Theorem \ref{MT2}]
  Noticing that
  \[
  \sup\{\re\lambda\,:\,\lambda\in\sigma(\p\Psi(0))\}=-\frac{k\Delta_\rho}{2\mu}<0,
  \]
  the assertion of Theorem \ref{MT2}  is a straightforward consequence of  \cite[Theorem 9.1.2]{L95} and of Theorem~\ref{MT1}.   
 \end{proof}
 %%%%%%%%%%%%%%%%%%%%%%%%%%%%%%%%%%%%%%%%%%%%%%%%%%%%%%%%%%%%%%%%%%%
%%%%%%%%%%%%%%%%%%%%%%%%%%%%%%%%%%%%%%%%%%%%%%%%%%%%%%%%%%%%%%%%%%%%
%%%%%%%%%%%%%%%%%%%%%%%%%%%%%%%%%%%%%%%%%%%%%%%%%%%%%%%%%%%%%%%%%%%%
%%%%%%%%%%%%%%%%%%%%%%%%%%%%%%%%%%%%%%%%%%%%%%%%%%%%%%%%%%%%%%%%%%%
%%%%%%%%%%%%%%%%%%%%%%%%%%%%%%%%%%%%%%%%%%%%%%%%%%%%%%%%%%%%%%%%%%%%
%%%%%%%%%%%%%%%%%%%%%%%%%%%%%%%%%%%%%%%%%%%%%%%%%%%%%%%%%%%%%%%%%%%%
\appendix
\section{Some technical results}\label{S:A}
 %%%%%%%%%%%%%%%%%%%%%%%%%%%%%%%%%%%%%%%%%%%%%%%%%%%%%%%%%%%%%%%%%%%
%%%%%%%%%%%%%%%%%%%%%%%%%%%%%%%%%%%%%%%%%%%%%%%%%%%%%%%%%%%%%%%%%%%%
%%%%%%%%%%%%%%%%%%%%%%%%%%%%%%%%%%%%%%%%%%%%%%%%%%%%%%%%%%%%%%%%%%%%
%%%%%%%%%%%%%%%%%%%%%%%%%%%%%%%%%%%%%%%%%%%%%%%%%%%%%%%%%%%%%%%%%%%
%%%%%%%%%%%%%%%%%%%%%%%%%%%%%%%%%%%%%%%%%%%%%%%%%%%%%%%%%%%%%%%%%%%%
%%%%%%%%%%%%%%%%%%%%%%%%%%%%%%%%%%%%%%%%%%%%%%%%%%%%%%%%%%%%%%%%%%%%

We collect in Lemma \ref{L:A1} some    results on the boundedness of two families of multilinear singular integral operators  which are needed in the analysis and which are extensions
of some results established in the nonperiodic case in \cite{M16x}.

  \begin{lemma}\label{L:A1}\footnote{By convention  the empty product is  equal to $1$.}
  \begin{itemize}
  \item[$(i)$] Given $n,\, m\in\N$ and  functions $a_1,\ldots, a_{n},\, b_1, \ldots,b_m:\R\to\R$ which are  Lipschitz continuous, the singular integral operator 
  $B_{m,n}(a_1,\ldots, a_{n})[b_1,\ldots,b_m,\,\cdot\,]$ defined by
\[
B_{m,n}(a_1,\ldots, a_{n})[b_1,\ldots,b_m,h](x):=\PV\int_\R  \frac{h(x-y)}{y}\cfrac{\prod_{i=1}^{m}\big(\delta_{[x,y]} b_i /y\big)}{\prod_{i=1}^{n}\big[1+\big(\delta_{[x,y]}  a_i /y\big)^2\big]}\, dy ,
\]
belongs to $\kL(L_2(\R))$ and $\|B_{m,n}(a_1,\ldots, a_{n})[b_1,\ldots,b_m,\,\cdot\,]\|_{\kL(L_2(\R))}\leq C\prod_{i=1}^{m} \|b_i'\|_{\infty}$, where $C$ is a constant depending only 
on $n,\, m$ and $\max_{i=1,\ldots, n}\|a_i'\|_{\infty}.$\\
 \item[$(ii)$] Given $n, m \in\N$,  $r\in(3/2,2)$, $a_1,  \ldots, a_{n},\, b_1,\ldots,b_{m}\in H^r(\s)$, and $h\in L_2(\R)$ we define
\[
 A_{m,n}(a_1, \ldots, a_{n})[b_1, \ldots,b_{m}, h](x):=\PV\int_\R\frac{\prod_{i=1}^{m}\big(\delta_{[x,y]} b_i/y\big )}{\prod_{i=1}^{n}\big[1+\big(\delta_{[x,y]} a_i /y\big)^2\big]}\frac{ \delta_{[x,y]}h}{y} \, dy.
\]
Then:\\[-2ex]
\begin{itemize}
 \item[$(ii1)$] There exists a constant $C$, depending only on $r,$ $n,$ $m,$ and $\max_{i=1,\ldots, n}\|a_i\|_{H^r(\s)}$, such that  
\begin{equation*}
\big\|A_{m,n}(a_1, \ldots, a_{n})[b_1, \ldots,b_{m}, h]\big\|_{L_2(\R)}\leq C\|h\|_{L_2(\R)}\prod_{i=1}^m\|b_i\|_{H^r(\s)}
\end{equation*}
for all $  b_1,\ldots,b_m\in H^r(\s) $ and  $h\in L_2(\R)$. \\[-2ex]
\item[$(ii2)$] $A_{m,n}\in {\rm C}^{1-}((H^r(\s))^{n},\kL_{m+1}((H^r(\s))^m\times L_2(\R), L_2(\R))$.\\
\end{itemize}
\item[$(iii)$] Let $n\in\N$, $1\leq m\in  \N$, $r\in(3/2,2)$, $\tau\in(5/2-r,1)$, and $a_1,  \ldots, a_{n}\in H^r(\s)$ be given.
Then:\\[-2ex]
\begin{itemize}
 \item[$(iii1)$] There exists a constant $C$, depending only on $r $  and $\tau$    such that  
\begin{equation*}
 \big\|A_{m,n}(a_1, \ldots, a_{n})[b_1, \ldots,b_{m}, h]\big\|_{L_2(\R)}\leq C\|h\|_{H^\tau(\R)}\|b_m\|_{H^{r-1}(\s)}\prod_{i=1}^{m-1}\|b_i'\|_{\infty}
\end{equation*}
for all $ b_1,\ldots,b_{m}\in H^r(\s) $ and all $h\in H^1(\R)$. In particular, $A_{m,n}(a_1, \ldots, a_{n})$ extends to a bounded operator 
$$A_{m,n}(a_1, \ldots, a_{n})\in\kL_{m+1}((H^r(\s))^{m-1}\times H^{r-1}(\s)\times H^\tau(\R), L_2(\R)).$$
\item[$(iii2)$] $A_{m,n}\in {\rm C}^{1-}((H^r(\s))^{n},\kL_{m+1}((H^r(\s))^{m-1}\times H^{r-1}(\s)\times H^\tau(\R), L_2(\R))$.
\end{itemize}
\end{itemize}
\end{lemma}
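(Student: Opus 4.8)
The plan is to reduce everything to the nonperiodic estimates from \cite{M16x}. The key structural observation is that the periodic kernels appearing in $B_{m,n}$ and $A_{m,n}$ are built from the \emph{nonperiodic} kernels $1/y$, $\delta_{[x,y]}b_i/y$, $\delta_{[x,y]}a_i/y$, with the $a_i,b_i$ merely assumed globally Lipschitz (in $(i)$) or the periodic $H^r$-regularity pulled back to $\R$ via the cutoff construction $\widetilde h=h\varphi$ of Lemma \ref{L:3}. Thus part $(i)$ is \emph{verbatim} the $L_2(\R)$-boundedness of multilinear Calder\'on--Commutator–type operators: the denominator $\prod_{i=1}^n[1+(\delta_{[x,y]}a_i/y)^2]$ is bounded below by $1$ and is itself a bounded multilinear expression in the quantities $\delta_{[x,y]}a_i/y$ (each of which has $L^\infty$-norm $\le\|a_i'\|_\infty$), so one expands the reciprocal in a uniformly convergent geometric-type series and invokes the Coifman--McIntosh--Meyer bound (or the corresponding statement already proved in the nonperiodic companion paper \cite{M16x}) to get $\kL(L_2(\R))$-boundedness with constant $C(n,m,\max_i\|a_i'\|_\infty)\prod_{i=1}^m\|b_i'\|_\infty$. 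I would state this as an immediate citation plus one line expanding the denominator.

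For part $(ii)$ I would first note that $A_{m,n}(a_1,\dots,a_n)[b_1,\dots,b_m,h]=B_{m,n}(a_1,\dots,a_n)[b_1,\dots,b_m,h]-h\,B_{0,n}(a_1,\dots,a_n)[\,\cdot\,]$ after splitting $\delta_{[x,y]}h=h(x)-h(x-y)$; the first term is controlled by $(i)$ and the second by $(i)$ with $m=0$ together with $\|h\|_\infty\le C\|h\|_{H^r}$—wait, here $h\in L_2(\R)$ only, so instead I keep $A_{m,n}$ intact and use the Sobolev embedding on the $b_i$'s: since $r\in(3/2,2)$, $H^r(\s)\hookrightarrow C^{1-}(\s)$ hence $b_i$ is Lipschitz with $\|b_i'\|_\infty\le C\|b_i\|_{H^r(\s)}$, and one applies $(i)$ after first extending each $b_i$ to a Lipschitz function on $\R$ via $\widetilde{b_i}=\varphi b_i$ (note $\delta_{[x,y]}b_i=\delta_{[x,y]}\widetilde{b_i}$ for the relevant $x,y$ ranges, or one absorbs the tail of $1/y$ as a bounded integral operator as in the treatment of $A_{2b}$ in Lemma \ref{L:3}). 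This gives $(ii1)$. For $(ii2)$, the local Lipschitz continuity in the $a_i$-arguments follows by the standard resolvent-identity trick: for two tuples $a,\bar a$ one writes the difference $A_{m,n}(a)-A_{m,n}(\bar a)$, telescopes in the denominator, and each resulting term is again of the form covered by $(ii1)$ with one factor $\delta_{[x,y]}(a_i-\bar a_i)/y$ inserted—whose $L^\infty$ norm is $\le\|a_i-\bar a_i\|_{H^r}$—while the $H^r$-norms of the surviving $a$'s stay bounded on the ball; multilinearity in $(b_1,\dots,b_m,h)$ is built in.

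Part $(iii)$ is the one genuinely new ingredient and where the work lies. Here one of the factors, $\delta_{[x,y]}b_m/y$, is only asked to carry $H^{r-1}(\s)$-regularity (i.e.\ one cannot put a derivative on it), but in compensation $h$ is allowed to live in $H^\tau(\R)$ with $\tau>5/2-r$ rather than merely $L_2$. The idea is the classical one of trading a half-derivative: write $\delta_{[x,y]}b_m/y$ as a fractional-integration average, so that the operator $A_{m,n}$ becomes a composition $T\circ(\text{fractional smoothing of order }r-1)$ acting on the pair $(b_m,h)$; equivalently one uses the commutator estimate that for $g\in H^{r-1}(\R)$ the map $h\mapsto \PV\int (\delta g/y)(\delta h/y)\,dy$ gains regularity, and $H^{r-1}$ times $H^\tau$ lands in $L_2$ precisely when $(r-1)+\tau>1/2+1$, i.e.\ $\tau>5/2-r$, by a Sobolev product/para-product estimate. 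The remaining factors $\delta b_i/y$ ($i<m$) again contribute only their $\|b_i'\|_\infty$. Concretely I would either cite the corresponding nonperiodic lemma from \cite{M16x} and transfer it by the $\widetilde h=\varphi h$ device and the $|s|>\pi$ truncation argument of Lemma \ref{L:3}, or prove it directly via a Littlewood--Paley decomposition: freeze the $a_i$'s (denominator is a bounded symbol), reduce to $B_{m,n}$, and estimate $\|B_{m,n}[b_1,\dots,b_{m-1},b_m,h]\|_{L_2}$ by duality, pairing against $\psi\in L_2$ and using that $\int\!\!\int \psi(x)(\delta b_m/y)(\delta h/y)(1/y)\cdots$ is a trilinear form bounded on $L_2\times H^{r-1}\times H^\tau$ by the product rule in Sobolev spaces once the extra Lipschitz factors are pulled out in $L^\infty$. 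The $C^{1-}$-dependence $(iii2)$ on the $a_i$ then follows by the same telescoping/resolvent argument as in $(ii2)$.

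\medskip
The main obstacle I anticipate is $(iii1)$: verifying carefully that the half-derivative gain is exactly $\tau>5/2-r$ and not, say, $\tau\ge 5/2-r$ or some strict-inequality subtlety at the endpoint, and making the transfer from $\R$ to $\s$ rigorous for the low-regularity factor $b_m\in H^{r-1}$ (the cutoff $\varphi b_m$ still lies in $H^{r-1}(\R)$, and the far-field part of the kernel, where $|y|\ge\pi$, is handled by a separate non-singular bounded-operator estimate exactly as in the $A_{2b}$ discussion of Lemma \ref{L:3}). Everything else is a routine—if lengthy—adaptation of the nonperiodic commutator machinery of \cite{M16x}.
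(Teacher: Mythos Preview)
The paper's own proof is a single sentence: it says the argument is \emph{identical} to that of Lemmas~3.1 and~3.4 (and Remark~3.3) in \cite{M16x}, with no adaptation needed. Your overall plan---reduce to the nonperiodic commutator estimates of \cite{M16x}---is therefore exactly right, and your sketch of what lies behind those lemmas (Coifman--McIntosh--Meyer for $(i)$, Sobolev embedding plus telescoping for $(ii)$, a fractional product estimate for $(iii)$) is a reasonable unpacking of the cited results.

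Where you over-complicate matters is in the ``transfer from $\R$ to $\s$'' via cutoffs $\widetilde b_i=\varphi b_i$ and tail estimates for $|y|>\pi$. None of this is needed here. The operators $B_{m,n}$ and $A_{m,n}$ in the statement already integrate over all of $\R$, and the functions $a_i,b_i\in H^r(\s)$ are by definition $2\pi$-periodic functions on $\R$; the only quantities the proofs in \cite{M16x} use are $\|a_i'\|_\infty$, $\|b_i'\|_\infty$, and (for $(iii)$) H\"older-type seminorms of $b_m$, all of which are finite for periodic functions and controlled by the corresponding $H^r(\s)$- or $H^{r-1}(\s)$-norms. This is precisely why the paper can say the proof is ``identical'' rather than ``analogous''. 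The cutoff $\varphi$ in Lemma~\ref{L:3} was applied to the \emph{argument} $h$ (to embed $H^k(\s)$ into $H^k(\R)$), but in Lemma~\ref{L:A1} the argument $h$ already lives in $L_2(\R)$ or $H^\tau(\R)$, so no such device is required.

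One small slip: in your decomposition for $(ii)$ you had the roles of $h$ and the kernel reversed. The correct splitting is $A_{m,n}[\ldots,h](x)=h(x)\,g(x)-B_{m,n}[\ldots,h](x)$ with $g(x):=\PV\int K(x,y)\,dy\in L^\infty$ (a Cauchy-integral-on-Lipschitz-graph type bound), so it is $g$, not $h$, that must lie in $L^\infty$. You abandoned this line unnecessarily; in any case, \cite{M16x} treats $A_{m,n}$ directly and you need not unfold it.
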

\begin{proof}
 The proof is identical to that of the Lemmas 3.1 and 3.4 (see also Remark 3.3) in \cite{M16x}.
\end{proof}

The following  result is used in the proof of Lemma \ref{L:3}.
 \begin{lemma}\label{L:A2}
 Let $f\in {\rm C}(\s)$ and assume that $g:=f|_{(-\pi,\pi)}\in  H^1((-\pi,\pi))$.
Then $f\in H^1(\s)$ and $f'$ is the $2\pi$-periodic extension of $g'$.
\end{lemma}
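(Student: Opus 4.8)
The plan is to verify directly that the distributional derivative of $f$ on the circle is the $2\pi$-periodic extension of $g'$, which automatically places $f$ in $H^1(\s)$ since $g'\in L_2((-\pi,\pi))$ extends to an $L_2(\s)$-function. First I would let $G$ denote the $2\pi$-periodic extension of $g'$ to $\R$; since $g'\in L_2((-\pi,\pi))$ we have $G\in L_2(\s)$. It then suffices to show that for every test function $\varphi\in {\rm C}^\infty(\s)$ one has
\[
\int_{-\pi}^\pi f(x)\varphi'(x)\, dx = -\int_{-\pi}^\pi G(x)\varphi(x)\, dx.
\]

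To prove this identity I would fix $\varepsilon\in(0,\pi)$ and integrate by parts on the interval $(-\pi+\varepsilon,\pi-\varepsilon)$, where $g\in H^1$ and $\varphi$ is smooth, obtaining
\[
\int_{-\pi+\varepsilon}^{\pi-\varepsilon} f\varphi'\, dx = f(\pi-\varepsilon)\varphi(\pi-\varepsilon) - f(-\pi+\varepsilon)\varphi(-\pi+\varepsilon) - \int_{-\pi+\varepsilon}^{\pi-\varepsilon} g'\varphi\, dx.
\]
Now I let $\varepsilon\to0$. The continuity of $f$ on $\s$ gives $f(\pi-\varepsilon)\to f(\pi)$ and $f(-\pi+\varepsilon)\to f(-\pi)=f(\pi)$ (here $f(\pm\pi)$ means the common value of the continuous periodic function at the identified point), while $\varphi(\pi-\varepsilon)\to\varphi(\pi)=\varphi(-\pi)$ and $\varphi(-\pi+\varepsilon)\to\varphi(-\pi)$; hence the boundary terms cancel in the limit. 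The integral terms converge by dominated convergence since $f\varphi'\in L_1$ and $g'\varphi\in L_1$. This yields the displayed weak-derivative identity, so $f'=G\in L_2(\s)$ and $f\in H^1(\s)$, with $f'$ the periodic extension of $g'$ as claimed.

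The only subtle point — and the one I would be careful to state clearly rather than the source of any real difficulty — is that the cancellation of the boundary terms relies precisely on the hypothesis $f\in {\rm C}(\s)$, i.e. that the values of $f$ from the left and right of the identified point $\{\pm\pi\}$ agree; without global continuity the two boundary contributions would not cancel and $f$ would instead have a jump (a Dirac mass in $f'$). Everything else is routine one-dimensional integration by parts together with a limiting argument, so there is no genuine obstacle here; the lemma is essentially a bookkeeping statement ensuring that the locally-defined derivative $(A_1(f)[\wt h]-A_2(f)[h])'$ on $(-\pi,\pi)$ obtained in Lemma \ref{L:3} glues to a genuine element of $H^1(\s)$.
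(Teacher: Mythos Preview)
Your proof is correct and follows essentially the same approach as the paper: define the periodic extension of $g'$, test against $\varphi\in{\rm C}^\infty(\s)$, integrate by parts, and observe that the boundary contributions at $\pm\pi$ cancel because $f\in{\rm C}(\s)$ forces $f(-\pi)=f(\pi)$. The only cosmetic difference is that the paper integrates by parts directly on $(-\pi,\pi)$ (using that $g\in H^1((-\pi,\pi))$ is absolutely continuous up to the boundary with $g(\pm\pi)=f(\pm\pi)$), whereas you pass through an $\varepsilon$-truncation and a limit; both routes are standard and amount to the same argument.
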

 \begin{proof}
  Let $F\in L_2(\s)$ be the $2\pi$-periodic extension of $g'$. Given $\varphi\in {\rm C}^\infty(\s)$, it holds, in virtue of $g(\pm\pi)=f(\pm\pi)=f(\pi),$ that
  \begin{align*}
   \int_\s f\varphi'\, dx=\int_{-\pi}^\pi g\varphi'\, dx=(g\varphi)(\pi)-(g\varphi)(-\pi)-\int_{-\pi}^\pi g'\varphi\, dx=-\int_\s F\varphi\, dx.
  \end{align*}
This proves the claim.
 \end{proof}

The following technical result is employed in the proof of Theorem \ref{T1}. 
\begin{lemma}\label{L:A3}
 Given $f\in H^s(\s),$ $ s\in(3/2,2) $, let 
 $\phi_6(f):\R\to\R$ be defined by
  \begin{align*}
\phi_6(f)(x) :=&\PV\int_{-\pi}^\pi \frac{1}{s}\frac{1}{1+(\delta_{[x,s]}f/s )^2}\, ds.
\end{align*}
Let further $\alpha:=s/2-3/4\in(0,1).$
It then holds $\phi_6\in{\rm C}^{\alpha}(\s)$ and 
\begin{align}\label{UET1}
 \sup_{\tau\in[0,1]}\|\phi_6(\tau f) \|_{{\rm C}^{\alpha}}<\infty.
 \end{align}
\end{lemma}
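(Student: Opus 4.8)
The plan is to desingularise the principal value by the same symmetry trick used to obtain the representation \eqref{p5} of $\phi_5$, and then to read off the Hölder regularity from sharp pointwise bounds on the resulting absolutely convergent integrand and on its $x$-derivative. Throughout I write the integration variable as $\sigma$ (to keep it apart from the Sobolev exponent $s$) and use the embedding $H^s(\s)\hookrightarrow {\rm C}^{s-1/2}(\s)$, which since $s-1/2\in(1,3/2)$ means $f\in {\rm C}^1(\s)$ with $f'\in {\rm C}^{s-3/2}(\s)$; every constant below will depend on $f$ only through $s$, $\|f'\|_\infty$ and $\|f'\|_{{\rm C}^{s-3/2}(\s)}$.

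First I would combine, via the substitution $s\mapsto -s$, the contributions of positive and negative values of the integration variable (relabelled $\sigma$) to write
\[
\phi_6(f)(x)=\int_0^\pi\frac1\sigma\Big[\frac1{1+(\delta_{[x,\sigma]}f/\sigma)^2}-\frac1{1+(\delta_{[x,-\sigma]}f/\sigma)^2}\Big]\,d\sigma=\int_0^\pi G(x,\sigma)\,d\sigma,
\]
and an elementary manipulation (exactly as for $\phi_5$) shows $G(x,\sigma)=P(x,\sigma)Q(x,\sigma)/(\sigma^3R(x,\sigma))$, where $P(x,\sigma):=f(x-\sigma)-f(x+\sigma)$, $Q(x,\sigma):=2f(x)-f(x+\sigma)-f(x-\sigma)$ and $R(x,\sigma):=[1+(\delta_{[x,\sigma]}f/\sigma)^2][1+(\delta_{[x,-\sigma]}f/\sigma)^2]\ge1$. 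The decisive point is that $Q$ is a \emph{second} difference of $f$: writing $Q=\int_0^\sigma(f'(x-t)-f'(x+t))\,dt$ and using the Hölder continuity of $f'$ gives $|Q|\le C\sigma^{s-1/2}$, which beats the naive $O(\sigma)$ bound and is precisely what makes $G$ integrable near $\sigma=0$. Together with $|P|\le2\sigma\|f'\|_\infty$ this yields $|G(x,\sigma)|\le C\sigma^{s-5/2}$, and since $s-5/2>-1$ the integral converges absolutely, so $\phi_6(f)$ is well defined, $\|\phi_6(f)\|_\infty\le C$, and the principal value is in fact no longer needed. Differentiating in $x$ and using, in addition, $|\partial_x P|\le C\sigma^{s-3/2}$, $|\partial_x Q|\le C\sigma^{s-3/2}$ and $|\partial_x R|\le C\sigma^{s-5/2}$ (the last because $\delta_{[x,\pm\sigma]}f/\sigma$ has bounded modulus and $x$-derivative of size $O(\sigma^{s-5/2})$), the dominant term of $\partial_x G$ is $P\,\partial_x Q/(\sigma^3R)$, so $|\partial_x G(x,\sigma)|\le C\sigma^{s-7/2}$ for $\sigma\le1$ and $\le C$ for $\sigma\in[1,\pi]$. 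Note $s-7/2<-1$: this is why $\phi_6(f)$ is only Hölder, not ${\rm C}^1$.

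For the Hölder estimate I would fix $x_1,x_2$ with $\delta:=|x_1-x_2|\le1$ and split $\int_0^\pi=\int_0^\delta+\int_\delta^\pi$. On $(0,\delta)$ one has $|G(x_1,\sigma)-G(x_2,\sigma)|\le 2C\sigma^{s-5/2}$, whose integral is $C\delta^{s-3/2}$; on $(\delta,\pi)$ one uses $|G(x_1,\sigma)-G(x_2,\sigma)|\le\delta\sup_x|\partial_x G(x,\sigma)|$, whose integral is at most $\delta\big(C\int_\delta^1\sigma^{s-7/2}\,d\sigma+C\big)\le C\delta\cdot\delta^{s-5/2}=C\delta^{s-3/2}$ (here $s-7/2<-1$). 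Hence $|\phi_6(f)(x_1)-\phi_6(f)(x_2)|\le C|x_1-x_2|^{s-3/2}=C|x_1-x_2|^{2\alpha}$ for $|x_1-x_2|\le1$, while for larger separations one simply estimates by $2\|\phi_6(f)\|_\infty$; since $2\alpha>\alpha$ this gives $\phi_6(f)\in {\rm C}^\alpha(\s)$ with norm controlled by $s$, $\|f'\|_\infty$ and $\|f'\|_{{\rm C}^{s-3/2}(\s)}$. Finally, replacing $f$ by $\tau f$ only decreases $\|(\tau f)'\|_\infty$ and $\|(\tau f)'\|_{{\rm C}^{s-3/2}(\s)}$ and keeps $R\ge1$, so the same constant works for all $\tau\in[0,1]$ and \eqref{UET1} follows.

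I expect the main obstacle to be extracting exactly the right exponents: one must use the second-difference structure of $Q$ (and, for $\partial_x G$, of $\partial_x Q$, $\partial_x P$ and the numerator of $\partial_x R$) to upgrade the naive $O(1/\sigma)$ size of $G$ to $O(\sigma^{s-5/2})$ and that of $\partial_x G$ to $O(\sigma^{s-7/2})$; any coarser bound either destroys integrability at $\sigma=0$ or produces too small a Hölder exponent. One also has to be disciplined about keeping every constant dependent on $f$ only through $\|f'\|_\infty$ and $\|f'\|_{{\rm C}^{s-3/2}(\s)}$, so that the estimate survives the rescaling $f\mapsto\tau f$ needed for \eqref{UET1}.
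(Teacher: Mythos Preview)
Your proof is correct and follows the natural approach (symmetrize to remove the principal value, exploit the second-difference structure of $Q$ to gain the extra $\sigma^{s-3/2}$, then split the H\"older difference at $\delta=|x_1-x_2|$); the paper gives no details and simply refers to the analogous nonperiodic result \cite[Lemma A.1]{M16x}, whose method is of the same kind. In fact your argument yields the H\"older exponent $s-3/2=2\alpha$, which is strictly stronger than the $\alpha$ stated in the lemma.
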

\begin{proof}
 The proof is similar to that of \cite[Lemma A.1]{M16x}.
\end{proof}

In the proof of Theorem \ref{MT1} we use  results for abstract quasilinear parabolic problem obtained by H. Amann in a more general context, see \cite[Section 12]{Am93} and  \cite{Am86, Am88}, and 
which we collect in Theorem \ref{T:A}.

\begin{thm}\label{T:A}
Let $\E_0, \E_1$ be Banach spaces with compact dense embedding $\E_1\hookrightarrow \E_0$, let $[\cdot,\cdot]_\theta$ denote the complex interpolation functor, and let $\E_\theta:=[\E_0,\E_1]_\theta$ for all $0<\theta<1.$
Let further $0<\beta<\alpha<1$ and assume that  
\[-\Phi\in C^{1-}(\E_\beta, \mathcal{H}(\E_1,\E_0)).\]
The following assertions hold for the quasilinear evolution problem 
\begin{equation}
  \p_t f=\Phi(f)[f],\quad t>0,\qquad f(0)=f_0.\tag{QP}\\[-0.1ex]
\end{equation}
{\bf\em  Existence:} Given  $f_0\in \E_\alpha,$ the problem {\rm (QP)}
possesses a   maximal solution  
\[ f:=f(\cdot; f_0)\in C([0,T_+(f_0)),\E_\alpha)\cap C((0,T_+(f_0)), \E_1)\cap C^1((0,T_+(f_0)), \E_0) \cap C^{\alpha-\beta}([0,T], \E_\beta)\]
for all $T\in(0,T_+(f_0))$, with $T_+(f_0)\in(0,\infty]$. \medskip

\noindent{\bf\em  Uniqueness:} If $\wt T\in(0,\infty]$, $\eta\in(0,\alpha-\beta]$,  and $\wt f \in  C((0,\wt T), \E_1)\cap C^1((0,\wt T), \E_0)$ satisfies 
\[
\wt f\in  C^{\eta}([0,T], \E_\beta) \qquad\text{for all $T\in(0,\wt T)$}
\]
and solves {\rm (QP)}, then $\wt T\leq T_+(f_0)$ and $\wt f=f$ on $[0,\wt T)$.\medskip

\noindent{\bf\em  Criterion for global existence:} If $f:[0,T]\cap[0,T_+(f_0))\to \E_\alpha$ is bounded for all $T>0$, then  
\[
T_+(f_0)=\infty.
\]
\noindent{\bf\em  Continuous dependence of initial data:}
The  mapping $[(t,f_0)\mapsto f(t;f_0)]$ defines a  semiflow on $\E_\alpha $ and, if   $\Phi\in C^{\omega}(\E_\beta, \mathcal{L}(\E_1,\E_0))$, then
\[[(t,f_0)\mapsto f(t;f_0)]:\{(t,f_0)\,:\, f_0\in\E_\alpha, t\in(0,T_+(f_0))\}\to \E_\alpha\]
is a real-analytic map too.
\end{thm}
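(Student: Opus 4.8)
The plan is to read (QP) as the abstract quasilinear Cauchy problem $\dot f + A(f)f = 0$, $f(0)=f_0$, with $A(f):=-\Phi(f)$, and to obtain all four assertions by specializing the theory developed by Amann in \cite[Section 12]{Am93} (see also \cite{Am86, Am88}); thus the work is essentially bookkeeping --- one checks that the present hypotheses, namely the compact dense embedding $\E_1\hookrightarrow\E_0$, the interpolation scale $\E_\theta=[\E_0,\E_1]_\theta$, $0<\beta<\alpha<1$, and $-\Phi\in C^{1-}(\E_\beta,\mathcal{H}(\E_1,\E_0))$, are exactly the standing assumptions of that framework, and then quotes the corresponding results. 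The only genuinely analytic input is that each $A_0\in\mathcal{H}(\E_1,\E_0)$ generates a strongly continuous analytic semigroup on $\E_0$ and enjoys maximal regularity in the scale $\E_\theta$; since $\mathcal{H}(\E_1,\E_0)$ is open in $\mathcal{L}(\E_1,\E_0)$ and $A$ is locally Lipschitz, $A(v(t))-A(f_0)$ is a uniformly small perturbation whenever $v$ stays in a small $\E_\beta$-ball around $f_0$, so a Neumann series transfers the maximal-regularity estimates to the non-autonomous operators $t\mapsto A(v(t))$.

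First I would establish \emph{local existence} by a contraction mapping argument on a suitable ball in $C([0,T],\E_\alpha)\cap C^{\alpha-\beta}([0,T],\E_\beta)$ carrying the correct trace at $t=0$: the map sending $v$ to the solution $u$ of $\dot u+A(v)u=0$, $u(0)=f_0$ is, for $T$ small, a self-map and a strict contraction, because the maximal-regularity constants are uniform over the ball while the Lipschitz factor of $A$ contributes $o(1)$ as $T\to0$; its fixed point is a solution with the stated regularity $f\in C([0,T],\E_\alpha)\cap C((0,T],\E_1)\cap C^1((0,T],\E_0)\cap C^{\alpha-\beta}([0,T],\E_\beta)$. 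A standard continuation/Zorn argument then yields a maximal existence time $T_+(f_0)$. For \emph{uniqueness} within the larger class $C((0,\wt T),\E_1)\cap C^1((0,\wt T),\E_0)$ satisfying the $\E_\beta$-H\"older bound, I would run a Gronwall estimate in $\E_0$ for the difference $w$ of two solutions, writing $\dot w+A(f)w=(A(f)-A(\wt f))\wt f$, bounding the right-hand side by $C\|w\|_{\E_\beta}$ and then, via \eqref{IP}, by $\varepsilon\|w\|_{\E_1}+C_\varepsilon\|w\|_{\E_0}$, and absorbing the $\E_1$-term into the parabolic term.

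Next, the \emph{global existence criterion} follows by contradiction: if $T_+(f_0)<\infty$ while $f$ remains bounded in $\E_\alpha$ --- hence in $\E_\beta$ --- on $[0,T_+(f_0))$, then $A(f(t))$ stays in a subset of $\mathcal{H}(\E_1,\E_0)$ on which the local existence time of (QP) is bounded below uniformly, so restarting the solution from $f(t_0)$ for $t_0$ close enough to $T_+(f_0)$ extends it beyond $T_+(f_0)$, contradicting maximality. Finally, \emph{dependence on the data}: the contraction being uniform in $f_0$ gives the semiflow property and Lipschitz dependence on compact time subintervals of $[0,T_+(f_0))$; when in addition $\Phi\in C^\omega(\E_\beta,\mathcal{L}(\E_1,\E_0))$, the real-analytic dependence is obtained by the parameter trick of \cite{An90, ES96, PSS15}, exploiting the autonomy of the equation together with a rescaling of time and applying the analytic implicit function theorem to the resulting family of problems.

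The step I expect to be the real bottleneck is the local existence, and within it the \emph{quantitative} form of the maximal-regularity/perturbation estimate: the linear theory must be run in the interpolation scale adapted to the trace space $\E_\alpha$ rather than merely in an $L_p$-setting, and the dependence of the constants on the coefficient operator must be explicit enough that the contraction factor can be forced below $1$ uniformly on the relevant ball. This is precisely the technical heart of \cite{Am88, Am93}; in the present paper it is imported wholesale, so beyond matching notation and hypotheses no new argument is needed.
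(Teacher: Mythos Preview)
Your proposal is essentially correct and, in fact, goes well beyond what the paper does: the paper gives no proof of this theorem at all. Theorem~\ref{T:A} is stated in the appendix purely as a compilation of results from Amann's abstract theory, with the sentence preceding it reading ``we use results for abstract quasilinear parabolic problems obtained by H.~Amann in a more general context, see \cite[Section 12]{Am93} and \cite{Am86, Am88}, and which we collect in Theorem~\ref{T:A}.'' There is no argument in the paper --- the theorem is simply quoted.

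Your sketch of the underlying machinery (maximal regularity plus contraction for local existence, Gronwall for uniqueness, uniform lower bound on lifespan for the blow-up criterion, analytic implicit function theorem for dependence on data) is a reasonable summary of how Amann's proofs proceed, and you correctly identify that the present paper's role is only bookkeeping: verifying that the hypotheses match. One small remark: for the real-analytic dependence on $(t,f_0)$ in the last assertion, the references \cite{An90, ES96, PSS15} you invoke are the ``parameter trick'' papers, which in this paper are used for a different purpose --- namely the space--time analyticity of $(t,x)\mapsto f(t,x)$ in Theorem~\ref{MT1}$(iii)$. The analytic semiflow statement in Theorem~\ref{T:A} itself is already contained in Amann's framework (via the analytic implicit function theorem applied to the fixed-point map), so no separate parameter trick is needed at that stage.
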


\end{document}